\newtheorem{lem}{Lemma}[section]
\newtheorem{thm}[lem]{Theorem}
\newtheorem{cor}[lem]{Corollary}
\newtheorem{exa}[lem]{Example}
\newcommand{\C}{{\mathcal{C}}}
\renewcommand{\H}{{\mathcal{H}}}
\newcommand{\inv}{{\textsf{inv}}}
\newcommand{\wt}{{\textsf{wt}}}
\renewcommand{\P}{{\mathcal{P}}}
\newcommand{\F}{{\mathcal{F}}}
\newcommand{\R}{{\mathcal{R}}}
\newcommand{\U}{{\mathcal{U}}}
\newcommand{\bangle}{\atopwithdelims \langle \rangle}
\title[On $xD$-generalizations of Stirling Numbers and Lah numbers]{On $xD$-generalizations of Stirling Numbers and\\ Lah numbers via Graphs and Rooks}
\author{Sen-Peng Eu}
\address{Department of Mathematics, National Taiwan Normal University, Taipei 106, Taiwan, ROC}
\email{speu@nuk.edu.tw}
\author{Tung-Shan Fu}
\address{Department of Applied Mathematics, National Pingtung University, Pingtung 900, Taiwan, ROC}
\email{tsfu@mail.nptu.edu.tw}
\author{Yu-Chang Liang}
\address{Department of Applied Mathematics, National Pingtung University, Pingtung 900, Taiwan, ROC}
\email{chase2369216@hotmail.com}
\author{Tsai-Lien Wong}
\address{Department of Applied Mathematics, National Sun Yat-sen University, Kaohsiung 804, Taiwan, ROC}
\email{tlwong@math.nsysu.edu.tw}
\begin{document}

\maketitle

\begin{abstract}
This paper studies the generalizations of the Stirling numbers of both kinds and the Lah numbers in association with the normal order problem in the Weyl algebra $W=\langle x,D|Dx-xD=1\rangle$. Any word $\omega\in W$ with $m$ $x$'s and $n$ $D$'s can be expressed in the normally ordered form $\omega=x^{m-n}\sum_{k\ge 0} {{\omega}\brace {k}} x^{k}D^{k}$, where ${{\omega}\brace {k}}$ is known as the Stirling number of the second kind for the word $\omega$. This study considers the expansions of restricted words $\omega$ in $W$ over the sequences $\{(xD)^{k}\}_{k\ge 0}$ and $\{xD^{k}x^{k-1}\}_{k\ge 0}$. Interestingly, the coefficients in individual expansions turn out to be generalizations of the Stirling numbers of the first kind and the Lah numbers. The coefficients will be determined through enumerations of some combinatorial structures linked to the words $\omega$, involving decreasing forest decompositions of quasi-threshold graphs and non-attacking rook placements on Ferrers boards.  Extended to $q$-analogues, weighted refinements of the combinatorial interpretations are also investigated for words in the $q$-deformed Weyl algebra.
\end{abstract}

\section{Introduction}
The Stirling numbers of both kinds and the Lah numbers are ubiquitous in combinatorics. In this paper, we study the generalizations of these numbers in association with the normal order problem in the Weyl algebra $W$ generated by two operators $x$ and $D$ with the relation $Dx-xD=1$.
A well known example of $W$ is the algebra of differential operators applied to polynomials $f(x)$, where the operator $x$ acts as multiplication by $x$, and $D$ as differentiation with respect to $x$, i.e., $(Df)(x)=\frac{d}{dx}f(x)$. Clearly, $(Dx-xD)f(x)=f(x)$. Any word $\omega\in W$ can be expressed in the normally ordered form
\[
\omega=\sum_{i,j\ge 0} c_{ij}x^{i}D^{j}
\]
for some non-negative integers $c_{ij}$. The problem of finding explicit formula for the normal order coefficients $c_{ij}$ appears in the theory of quantum mechanics, where the symbols $x$ and $D$ act as the boson annihilation operator and creation operator, denoted as $a$ and $a^{\dag}$, satisfying the commutation relation $aa^{\dag}-a^{\dag}a=1$.

\subsection{Stirling numbers of the second kind}
For the word $\omega=(xD)^n$, it has long been obtained by Scherk \cite{Scherk} in 1823 that the normal order coefficients of $(xD)^n$ are the \emph{Stirling numbers of the second kind}, denoted as ${{n}\brace {k}}$, i.e.,
\begin{equation} \label{eq:XD-2nd-Stirling}
(xD)^n=\sum_{k=0}^n {{n}\brace {k}} x^{k}D^{k}.
\end{equation}
These numbers ${{n}\brace {k}}$ count the number of ways to partition the set $[n]:=\{1,2,\dots,n\}$ into $k$ non-empty subsets. Generally speaking, any word $\omega$ in the Weyl algebra $W$ with $m$ $x$'s and $n$ $D$'s can be uniquely expanded over the sequence $\{x^{k}D^{k}\}_{k\ge 0}$ as
\begin{equation} \label{eq:word-2nd-Stirling}
\omega=x^{m-n} \sum_{k\ge 0} {{\omega}\brace k} x^{k}D^{k}.
\end{equation}
The integer sequence $({{\omega}\brace k})_{k\ge 0}$ are called the \emph{Stirling numbers of the second kind for the words} $\omega$.  There are a lot of studies on the normal order coefficients ${{\omega}\brace k}$ for various words $\omega$ in $W$. Specifically, we focus on the combinatorial interpretations of ${{\omega}\brace k}$ involving independent set decompositions of quasi-threshold graphs in \cite{EGH} and rook placements on Ferrers boards in \cite{Navon,Varvak}.

Navon \cite{Navon} associated $\omega$ with a Ferrers board within the rectangle in the plane $\mathbb{Z}\times\mathbb{Z}$ with the lower-left corner $(0,0)$ and the upper-right corner $(m,n)$ and gave a combinatorial interpretation of ${{\omega}\brace k}$ in terms of (non-attacking) rook placements on the board. Varvak demonstrated this interpretation and obtained a $q$-analogous result \cite[Theorems 3.2 and 6.3]{Varvak}. Recently, Engbers, Galvin and Hilyard \cite{EGH} studied the numbers ${{\omega}\brace k}$ on a collection of restricted words $\omega\in W$. A word $\omega\in W$ with $n$ $x$'s and $n$ $D$'s is called a \emph{Dyck word} of semi-length $n$ if every prefix of $\omega$ has at least as many $x$'s as $D$'s.
The word $\omega$ is associated with a quasi-threshold graph $G_{\omega}$ (defined in next section) and the number ${{\omega}\brace k}$ is realized as the number of ways to partition the graph $G_{\omega}$ into $k$ non-empty independent sets  \cite[Theorem 2.3]{EGH}.

\subsection{Stirling numbers of the first kind and Lah numbers for words} The motivation of this study comes from the following Stirling inversion,  mentioned in \cite[Exercise 1.46]{EC1}.
\begin{equation} \label{eq:XD-1st-Stirling}
x^{n}D^{n}=\sum_{k=0}^n (-1)^{n-k} {{n}\brack {k}} (xD)^k,
\end{equation}
where ${{n}\brack {k}}$ is the \emph{Stirling number of the first kind}.
Among other combinatorial interpretations, ${{n}\brack {k}}$ counts the number of ways to partition the complete graph on vertices $[n]$ into $k$-component \emph{decreasing forests} \cite[A008275]{OEIS}. By a \emph{decreasing tree} we mean an unordered rooted tree in which every path from the root is decreasing.

It turns out that the Dyck words in $W$ can also be expanded uniquely over the sequence $\{(xD)^{k}\}_{k\ge 0}$.
For a Dyck word $\omega$ with $n$ $x$'s and $n$ $D$'s, we propose the \emph{Stirling numbers of the first kind for the word} $\omega$, denoted as ${{\omega}\brack {k}}$, defined by the following expansion
\begin{equation} \label{eq:word-1st-Stirling}
\omega=\sum_{k=0}^n (-1)^{n-k}{{\omega}\brack k} (xD)^{k}.
\end{equation}
Note that the normal order coefficients of $\omega$ can be obtained by applying the transform in Eq.\,(\ref{eq:XD-2nd-Stirling}) to the expansion in Eq.\,(\ref{eq:word-1st-Stirling}).

Closed to the Stirling numbers of both kinds, the (unsigned) \emph{Lah numbers}, denoted as ${{n}\bangle {k}}$, are the connecting constants of the polynomial identity
\[
x(x+1)\cdots (x+n-1)=\sum_{k=0}^n {{n}\bangle {k}} x(x-1)\cdots (x-k+1),
\]
which yields
\[
{{n}\bangle {k}}=\sum_{j=k}^n {{n}\brack {j}} {{j}\brace {k}}.
\]
One of the combinatorial interpretations of ${{n}\bangle {k}}$ is the number of ways to partition the complete graph on vertices $[n]$ into a disjoint union of $k$ decreasing forests.
For combinatorial interest, we derive an identity
\begin{equation} \label{eq:XD-Lah-number}
x^{n}D^{n}=\sum_{k=0}^n (-1)^{n-k} {{n}\bangle {k}} xD^{k}x^{k-1},
\end{equation}
linking the word $x^{n}D^{n}$ to the sequence $\{xD^{k}x^{k-1}\}_{k\ge 0}$ by the Lah numbers.

For any word $\omega\in W$ with $n$ $x$'s and $n$ $D$'s, starting with an $x$, we propose the \emph{Lah numbers for the word} $\omega$, denoted as ${{\omega}\bangle {k}}$, defined by the following expansion
\begin{equation} \label{eq:word-Lah-number}
\omega=\sum_{k=0}^n (-1)^{n-k}{{\omega}\bangle k} xD^{k}x^{k-1}.
\end{equation}
Note that the normal order coefficients of $\omega$ can be obtained by applying the following transform to Eq.\,(\ref{eq:word-Lah-number})
\begin{equation} \label{eq:transform}
xD^{n}x^{n-1}=\sum_{k=0}^n {{n}\bangle {k}} x^{k}D^{k}.
\end{equation}
For convenience, sometimes we call ${{\omega}\brace k}, {{\omega}\brack k}$ the $xD$-\emph{Stirling numbers} and call ${{\omega}\bangle k}$ the $xD$-\emph{Lah numbers}. One of our main purposes is to give combinatorial interpretations of ${{\omega}\brack {k}}$ and ${{\omega}\bangle {k}}$ for Dyck words $\omega$ in terms of decreasing forest decompositions of the quasi-threshold graphs $G_{\omega}$ (Theorem \ref{thm:meaning-Stir-1st-Dyck} and Theorem \ref{thm:meaning-Lah-Dyck}) and in terms of rook placements on Ferrers boards (Corollary \ref{cor:rook-xD-Stirling-1st} and Corollary \ref{cor:rook-xD-Lah}).



\subsection{$q$-analogues of Stirling numbers and Lah numbers}
We shall extend the combinatorial interpretations of ${{\omega}\brack k}$ and ${{\omega}\bangle k}$ in the context of the $q$-\emph{deformed Weyl algebra} $W$ of operators $x$ and $D$ with the relation $Dx-qxD=1$ ($q$ denotes an indeterminate).

The problem of normal ordering in the $q$-deformed Weyl algebra $W$ has been studied by Katriel \cite{Katriel,KK} and Schork \cite{Schork}. For any word $\omega$ in $W$ with $m$ $x$'s and $n$ $D$'s, a $q$-analogue of the $xD$-Stirling number of the second kind, denoted as ${{\omega}\brace {k}}_q$, is defined by the following expansion
\begin{equation} \label{eq:q-xD-2nd-Stirling}
\omega=x^{m-n} \sum_{k\ge 0} {{\omega}\brace k}_q x^{k}D^{k}.
\end{equation}
Varvak  \cite{Varvak} gave a combinatorial interpretation for ${{\omega}\brace {k}}_q$ by defining an inversion statistic for the rook placements on the Ferrers board associated with $\omega$. For Dyck words $\omega$, Engbers et al. gave a combinatorial interpretation for ${{\omega}\brace {k}}_q$, which is quite involved, by defining a weight function for the partitions of the associated graph $G_{\omega}$ into $k$ non-empty independent sets \cite[Theorem 2.12]{EGH}.

Extended to $q$-analogues, for a Dyck word $\omega\in W$ with $n$ $x$'s and $n$ $D$'s, we define the $q$-analogue of the $xD$-Stirling number of the first kind, denoted as ${{\omega}\brack {k}}_q$, by the expansion
\begin{equation} \label{eq:q-word-1st-Stirling}
\omega=\sum_{k=0}^n (-1)^{n-k}{{\omega}\brack k}_q (xD)^{k}.
\end{equation}

For a Dyck word $\omega\in W$ with $n$ $x$'s and $n$ $D$'s, starting with an $x$, we define the $q$-analogue of the $xD$-Lah number, denoted as ${{\omega}\bangle {k}}_q$, by the expansion
\begin{equation} \label{eq:q-word-Lah-number}
\omega=\sum_{k=0}^n (-1)^{n-k}{{\omega}\bangle k}_q xD^{k}x^{k-1}.
\end{equation}
Our second set of main results are weighted realizations of ${{\omega}\brack k}_q$ (Theorems \ref{thm:weighted-q-xD-Stirling-1st-kind} and \ref{thm:Ferrers-meaning-q-Stirling-1st}) and ${{\omega}\bangle k}_q$ (Theorem \ref{thm:Ferrers-meaning-q-Lah-number}).

Meanwhile, considering the expansion of the specific word $\omega=(xD)^n$ in Eq.\,(\ref{eq:q-word-Lah-number}), we present a new $q$-Stirling number of the second kind (Theorem \ref{thm:meaning-q-Stirling-2nd}), which is different from the one introduced by Carlitz \cite{Carlitz}.
Moreover, considering the expansion of the word $\omega=x^nD^n$ in Eq.\,(\ref{eq:q-word-Lah-number}), we present a new $q$-Lah number, ${{n}\bangle {k}}_q$, realized by weighted decreasing forest decompositions of a complete graph (Theorem \ref{thm:weighted-q-Lah-number}). We remark that this is different from the $q$-Lah numbers of Garsia and Remmel \cite{GR} and the $q$-Lah numbers defined by Lindsay, Mansour and Shattuck in \cite{LMS}.

\subsection{Rook factorization theorem and chromatic polynomials}
Regarding rook replacements on Ferrers boards, a prominent result in rook theory is the Rook Factorization Theorem, given by Goldman, Joichi and White \cite{GJW}, which states that a factorial rook polynomial can be completely factorized into linear factors. There is also a $q$-counting rook configuration result given by Garsia and Remmel \cite{GR}.

Varvak \cite{Varvak} demonstrated that the $xD$-Stirling number of the second kind, ${{\omega}\brace {k}}$, and its $q$-analogue can be evaluated by the factorial rook polynomials.
Making use of Varvak's method, we derive the following identities for evaluating of the numbers ${{\omega}\brack {k}}$ and ${{\omega}\bangle {k}}$
\begin{equation} \label{eq:evaluation-xD-rook-factors}
\sum_{k=0}^n (-1)^{n-k} {{\omega}\brack {k}} z^k=\prod_{i=1}^n (z-c_i+i)=\sum_{k=0}^n (-1)^{n-k} {{\omega}\bangle {k}} z(z+1)\cdots(z+k-1),
\end{equation}
where $c_1,\dots,c_n$ are the column-heights of the Ferrers board associated with the word $\omega$. Their $q$-analogous results are also obtained (Theorems \ref{thm:rook-factorization-Stirling-1st}-\ref{thm:rook-factorization-q-Lah-number}).

In particular, for Dyck words $\omega$, the generating function for the (signed) numbers ${{\omega}\brack {k}}$ in Eq.\,(\ref{eq:evaluation-xD-rook-factors}) has an equivalent description in terms of the chromatic polynomials of the associated quasi-threshold graph $G_{\omega}$.
By Whitney's theorem \cite{Whitney}, we have another interpretation for ${{\omega}\brack {k}}$, counting the number of subgraphs consisting of $n-k$ edges of $G_{\omega}$ without broken circuits. We also present a bijection between the decreasing forest decompositions of $G_{\omega}$ and the broken-circuit free subgraphs of $G_{\omega}$ (Theorem \ref{thm:broken-circuit-bijection}).

The rest of the paper is organized as follows. In Section 2 we shall give combinatorial interpretations of the $xD$-Stirling number of the first kind ${{\omega}\brack {k}}$ and the $xD$-Lah number ${{\omega}\bangle {k}}$ in terms of decreasing forest decompositions of quasi-threshold graphs. In Section 3 we shall give a $q$-analogous result for the $xD$-Stirling number of the first kind, as well as a new $q$-Stirling number of the second kind and a new $q$-Lah number. In Section 4 we turn to rook placements on Ferrers boards and give combinatorial interpretations of  the numbers ${{\omega}\brack {k}}_q$ and ${{\omega}\bangle {k}}_q$. Section 5 will be devoted to the rook factorization results for ${{\omega}\brack {k}}$  and ${{\omega}\bangle {k}}$. In Section 6 we describe the chromatic polynomials of the quasi-threshold graph $G_{\omega}$ and the bijective result.

\section{Stirling numbers of the 1st kind and Lah numbers for Dyck words}  In this section, we explore combinatorial interpretations of ${{\omega}\brack {k}}$ and ${{\omega}\bangle {k}}$ for Dyck words $\omega$ in terms of graph decompositions.

Let $\C_n$ denote the set of Dyck words of semi-length $n$. A Dyck word $\omega\in\C_n$ is visualized with a lattice path from $(0,0)$ to $(n,n)$ in the plane $\mathbb{Z}\times\mathbb{Z}$, taking $x$ as the {\em north} step $(0,1)$ and $D$ as the {\em east} step $(1,0)$, that stays weakly above the line $y=x$, called a \emph{Dyck path} of length $n$. We shall use Dyck words and Dyck paths interchangeably.
Respecting the first east step returning to the line $y=x$, we factorize $\omega$ as $\omega=x\omega'D\omega''$, called the \emph{standard factorization} of $\omega$,
 where $\omega'$ and $\omega''$ are Dyck paths (possibly empty). We call the prefix $\mu=x\omega'D$ the first \emph{block} of $\omega$.

Engbers et al. \cite{EGH} associated $\omega$ with a graph $G_{\omega}$. The construction is described below.
The east steps $D$'s of $\omega$ are labeled $1,2,\dots, n$ from left to right. The north steps $x$'s of $\omega$ are matched up with $D$'s that face each other, in the sense that the line segment (also called a \emph{tunnel}) from the midpoint of a north step to the midpoint of an east step has slope 1 and stays below the path.  Each matched pair $(x,D)$ will be converted into a vertex.  On the vertices $[n]$, the graph $G_{\omega}$ is constructed inductively as follows.
\begin{enumerate}
\item If $\omega$ is empty then $G_{\omega}$ is empty.
\item Otherwise, factorize $\omega$ in the standard form $\omega=x\omega' D\omega''$. Then the graph $G_{\omega}$ is the disjoint union of $G_{\omega'}+K_1$ and $G_{\omega''}$, where $G_{\omega'}+K_1$ is the graph obtained from $G_{\omega'}$ by adding a dominating vertex with the label of $D$.
\end{enumerate}
The graph $G_{\omega}$ is also known as a \emph{quasi-threshold} graph. For example, the graph $G_{\omega}$ shown in Figure \ref{fig:word-graph} is associated with the Dyck word $\omega=xxDxxDDD$.

\begin{figure}[ht]
\begin{center}
\psfrag{w}[][][0.85]{$\omega$}
\psfrag{G_w}[][][0.85]{$G_{\omega}$}
\includegraphics[width=2.5in]{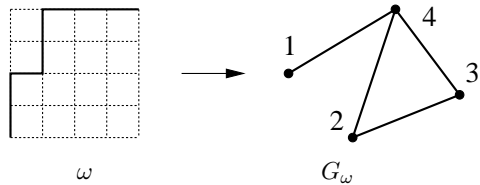}
\end{center}
\caption{\small The quasi-threshold graph $G_{\omega}$ associated with $\omega=xxDxxDDD$.}
\label{fig:word-graph}
\end{figure}

\subsection{The $xD$-Stirling numbers of the first kind.}
Recall that an unordered rooted tree $T$ on the vertex set $[n]$ is {\em
decreasing} if every path from the root is decreasing. The order of the children of a vertex is irrelevant.
A \emph{decreasing forest} $F$ on $[n]$ is a forest such that every component is a decreasing tree.
For any Dyck word $\omega\in\C_n$, we shall prove that the $xD$-Stirling number ${{\omega}\brack {k}}$
coincides with the number of ways to partition the graph $G_{\omega}$ into $k$-component decreasing forests.

The proof proceeds by induction on the semi-length $n$ of $\omega$, with the initial conditions ${{\omega}\brack {0}}=\delta_{n,0}$ for $n\ge 0$ and ${{\omega}\brack {k}}=0$ for $0\le n< k$.

\begin{lem} \label{lem:XD-Stirling-1st-kind-recurrence} Given a Dyck word $\omega\in\C_n$ with the standard factorization $\omega=x\omega'D\omega''$, let $m$ be the semi-length of the first block $\mu=x\omega'D$. Then the following relations hold.
\begin{enumerate}
\item For $1\le k\le n$, we have
\[
{{\omega}\brack {k}} = \sum_{k_1=1}^{m} {{\mu}\brack {k_1}}{{\omega''}\brack {k-k_1}}.
\]
\item For the first block $\mu=x\omega'D$ and $1\le k_1\le m$, we have
\[
{{\mu}\brack {k_1}}=\sum_{\ell=k_1-1}^{m-1} {{\omega'}\brack {\ell}}{{\ell}\choose {k_1-1}}.
\]
\end{enumerate}
\end{lem}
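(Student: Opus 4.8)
The plan is to exploit the fact that a Dyck word (indeed any word with equally many $x$'s and $D$'s) acts diagonally on the monomial basis $\{x^p\}_{p\ge 0}$, which converts the noncommutative operator identities into commutative polynomial identities. First I would record that $(xD)\,x^p=p\,x^p$, whence $(xD)^k x^p=p^k x^p$, so that from the defining expansion in Eq.\,(\ref{eq:word-1st-Stirling}) every Dyck word $\omega\in\C_n$ satisfies $\omega\, x^p=P_\omega(p)\,x^p$, where
\[
P_\omega(p):=\sum_{k=0}^n(-1)^{n-k}{{\omega}\brack k}\,p^k
\]
is a polynomial in $p$ of degree at most $n$. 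Since the polynomials $\{p^k\}$ are linearly independent, reading off the coefficient of $p^k$ in the eigenvalue $P_\omega$ recovers $(-1)^{n-k}{{\omega}\brack k}$; thus both identities reduce to polynomial identities for $P_\omega$.

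Second, I would establish two transformation rules for these eigenpolynomials matching the two operations in the standard factorization. With the convention that the rightmost symbol acts first (so $xD$ means differentiate, then multiply by $x$), concatenation $\omega=\mu\omega''$ gives $\omega\, x^p=\mu(\omega'' x^p)=P_{\omega''}(p)\,\mu\, x^p=P_\mu(p)P_{\omega''}(p)\,x^p$, so that $P_\omega=P_\mu\cdot P_{\omega''}$. For the block-forming operation $\mu=x\omega'D$, applying the three symbols to $x^p$ in turn — $D$ produces $p\,x^{p-1}$, then $\omega'$ produces $p\,P_{\omega'}(p-1)\,x^{p-1}$, then multiplication by $x$ — yields $P_\mu(p)=p\,P_{\omega'}(p-1)$.

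Part (i) then follows by extracting the coefficient of $p^k$ from $P_\omega=P_\mu P_{\omega''}$: since $\mu$ has semi-length $m$ and $\omega''$ has semi-length $n-m$, the product of the two expansions contributes the sign $(-1)^{m-k_1}(-1)^{(n-m)-(k-k_1)}=(-1)^{n-k}$, and collecting terms gives the convolution ${{\omega}\brack k}=\sum_{k_1}{{\mu}\brack{k_1}}{{\omega''}\brack{k-k_1}}$; the stated range $1\le k_1\le m$ is exactly where ${{\mu}\brack{k_1}}$ is nonzero, using ${{\mu}\brack 0}=\delta_{m,0}=0$ (as $\mu$ is nonempty) and ${{\mu}\brack{k_1}}=0$ for $k_1>m$. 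For part (ii) I would substitute the expansion of $P_{\omega'}$ (of degree $m-1$) into $P_\mu(p)=p\,P_{\omega'}(p-1)$, expand each $p(p-1)^\ell=\sum_j\binom{\ell}{j}(-1)^{\ell-j}p^{j+1}$ by the binomial theorem, and set $k_1=j+1$. The accumulated sign $(-1)^{(m-1)-\ell}(-1)^{\ell-(k_1-1)}$ collapses to $(-1)^{m-k_1}$, so matching the coefficient of $p^{k_1}$ against $(-1)^{m-k_1}{{\mu}\brack{k_1}}$ gives ${{\mu}\brack{k_1}}=\sum_\ell{{\omega'}\brack\ell}\binom{\ell}{k_1-1}$, with the range $k_1-1\le\ell\le m-1$ forced by $\binom{\ell}{k_1-1}=0$ for $\ell<k_1-1$ and ${{\omega'}\brack\ell}=0$ for $\ell>m-1$.

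The computation is essentially mechanical once the eigenvalue viewpoint is in place; the only genuine care needed is in the sign bookkeeping and in matching the summation ranges. The step I would double-check most carefully is the shift $p\mapsto p-1$ in the second transformation rule, since it rests on the operator-ordering convention and drives the appearance of the binomial coefficient $\binom{\ell}{k_1-1}$ in part (ii). The single conceptual move that does all the work is the observation that a balanced word is diagonalized by the monomials $x^p$, turning products and the block operation into multiplication and an argument shift of the eigenpolynomial.
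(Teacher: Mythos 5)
Your proof is correct, but it takes a genuinely different route from the paper's. The paper argues formally inside the Weyl algebra: part (i) by multiplying the defining expansions of $\mu$ and $\omega''$ and extracting the coefficient of $(xD)^k$, and part (ii) by substituting $xD=Dx-1$ into the expansion of $\omega'$, expanding $(Dx-1)^{\ell}$ binomially, and collapsing $x(Dx)^{i}D=(xD)^{i+1}$. You instead pass to the polynomial representation and diagonalize: $(xD)x^{p}=p\,x^{p}$ makes every balanced word act with eigenpolynomial $P_{\omega}(p)=\sum_{k}(-1)^{n-k}{{\omega}\brack {k}}p^{k}$, concatenation becomes multiplication of eigenpolynomials, the block operation becomes $P_{\mu}(p)=p\,P_{\omega'}(p-1)$, and both recurrences fall out by comparing coefficients of $p^{k}$; your shift $p\mapsto p-1$ is the representation-side counterpart of the paper's binomial expansion of $(Dx-1)^{\ell}$, and your sign and range bookkeeping (via ${{\mu}\brack {0}}=0$, ${{\mu}\brack {k_1}}=0$ for $k_1>m$, and vanishing binomial coefficients) is accurate. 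Notably, your technique is essentially the one the paper itself deploys later in the proof of Theorem \ref{thm:rook-factorization-Stirling-1st}, where $\omega$ is applied to $t^{z}$ to factor $\sum_k(-1)^{n-k}{{\omega}\brack {k}}z^k$ into linear factors; you have transplanted that argument back to Lemma \ref{lem:XD-Stirling-1st-kind-recurrence}. What your route buys: the legitimacy of extracting coefficients (i.e., uniqueness of the expansion over $\{(xD)^{k}\}_{k\ge 0}$) becomes transparent, reducing to linear independence of the monomials $p^{k}$, a point the paper leaves implicit; it also unifies this lemma with the Section 5 factorization results, and, run with $D_q$ and $[z]_q$ in place of $D$ and $z$, it reproduces the $q$-recurrences of Section 3. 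What the paper's route buys: it never leaves the algebra, so the single substitution $xD=q^{-1}(Dx-1)$ yields the $q$-deformed statements verbatim without any discussion of which representation realizes the deformed relation.
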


\begin{proof}
(i) By Eq.\,(\ref{eq:word-1st-Stirling}), we observe that
\[
\omega= \mu\omega'' = \left(\sum_{j=0}^{m} (-1)^{m-j} {{\mu}\brack {j}} (xD)^{j} \right)\left(\sum_{i=0}^{n-m} (-1)^{n-m-i} {{\omega''}\brack {i}} (xD)^{i} \right).
\]
Extracting the coefficient of $(xD)^{k}$ on both sides, the assertion follows.

(ii) Making reduction with the relation $xD=Dx-1$, we have
\begin{align*}
\mu= x\omega' D &= x\left(\sum_{\ell=0}^{m-1} (-1)^{m-1-\ell} {{\omega'}\brack {\ell}} (xD)^{\ell} \right)D \\
      &= x\left(\sum_{\ell=0}^{m-1} (-1)^{m-1-\ell} {{\omega'}\brack {\ell}}(Dx-1)^{\ell} \right)D \\
      &= x\left(\sum_{\ell=0}^{m-1} (-1)^{m-1-\ell} {{\omega'}\brack {\ell}} \sum_{i=0}^{\ell} {{\ell}\choose {i}} (-1)^{\ell-i}(Dx)^{i}\right)D \\
      &= \sum_{\ell=0}^{m-1} (-1)^{m-1-\ell} {{\omega'}\brack {\ell}} \sum_{i=0}^{\ell} {{\ell}\choose {i}} (-1)^{\ell-i}(xD)^{i+1}.
\end{align*}
Extracting the coefficient of $(xD)^{k_1}$ on both sides, the assertion follows.
\end{proof}

\smallskip
Now, we give a combinatorial interpretation of ${{\omega}\brack {k}}$ for Dyck words $\omega\in\C_n$.
Let $\F({\omega},k)$ be the collection of partitions of the graph $G_{\omega}$ into $k$-component decreasing forests. We assume $|\F({\omega},0)|=\delta_{n,0}$ for $n\ge 0$ and $|\F({\omega},k)|=0$ for $0\le n<k$.

\begin{thm} \label{thm:meaning-Stir-1st-Dyck} For any word $\omega\in\C_n$ and $1\le k\le n$, we have
\[
|\F({\omega},k)|={{\omega}\brack {k}}.
\]
\end{thm}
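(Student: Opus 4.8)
The plan is to argue by induction on the semi-length $n$, showing that the counting function $|\F(\omega,k)|$ obeys exactly the two recurrences recorded in Lemma \ref{lem:XD-Stirling-1st-kind-recurrence}, together with the same initial conditions. The base cases $n=0$ and $0\le n<k$ hold by our conventions $|\F(\omega,0)|=\delta_{n,0}$ and $|\F(\omega,k)|=0$. For the inductive step I take the standard factorization $\omega=x\omega'D\omega''$ with first block $\mu=x\omega'D$ of semi-length $m$, and observe that $\omega'$ and $\omega''$ have semi-lengths $m-1<n$ and $n-m<n$, so the induction hypothesis applies to both.

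First I would establish the combinatorial analogue of part (i). By construction $G_{\omega}$ is the disjoint union of $G_{\mu}=G_{\omega'}+K_1$ (on the labels $\{1,\dots,m\}$) and $G_{\omega''}$ (on the labels $\{m+1,\dots,n\}$), with no edges between the two parts. Hence any spanning decreasing forest of $G_{\omega}$ restricts to a spanning decreasing forest on each part independently, and the numbers of tree-components add; writing $k=k_1+(k-k_1)$ with $k_1\ge 1$ (as $G_{\mu}$ is nonempty) yields $|\F(\omega,k)|=\sum_{k_1=1}^{m}|\F(\mu,k_1)|\,|\F(\omega'',k-k_1)|$, matching Lemma \ref{lem:XD-Stirling-1st-kind-recurrence}(i).

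The crux is the analogue of part (ii), describing the effect of the dominating vertex. The vertex added to $G_{\omega'}$ to form $G_{\mu}=G_{\omega'}+K_1$ carries the label $m$, which is the \emph{maximum} label in the block, and is adjacent to every vertex of $G_{\omega'}$. I would set up a bijection: given a spanning decreasing forest $F$ of $G_{\mu}$ with $k_1$ components, deleting the vertex $m$---which, being of maximal label, must be the root of its tree---leaves a spanning decreasing forest $F'$ of $G_{\omega'}$ consisting precisely of the edges of $F$ not incident to $m$. If $m$ has $d$ children in $F$, then $F'$ has $\ell=k_1-1+d$ components, and $F$ is recovered from $F'$ by choosing which $d=\ell-k_1+1$ of the $\ell$ roots of $F'$ become children of $m$. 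Because $m$ is dominating it may be joined to any root, and because $m$ is maximal every such attachment preserves the decreasing condition; therefore each of the $\binom{\ell}{\ell-k_1+1}=\binom{\ell}{k_1-1}$ choices is valid and yields a distinct forest. Summing over the admissible range $k_1-1\le\ell\le m-1$ gives $|\F(\mu,k_1)|=\sum_{\ell=k_1-1}^{m-1}|\F(\omega',\ell)|\binom{\ell}{k_1-1}$, matching Lemma \ref{lem:XD-Stirling-1st-kind-recurrence}(ii).

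The main obstacle is verifying that this vertex-deletion map is a genuine bijection, i.e.\ that no information is lost and that every pair (forest $F'$, choice of children) actually arises. I expect the two defining properties of the new vertex---maximal label and dominance---to be exactly what makes this work: maximality forces $m$ to be a root and legitimizes every parent edge emanating from $m$, while dominance guarantees that the chosen edges indeed lie in $G_{\mu}$. Once both recurrences are in place, substituting (ii) into (i) expresses $|\F(\omega,k)|$ entirely through $|\F(\omega',\cdot)|$ and $|\F(\omega'',\cdot)|$; applying the induction hypothesis to the shorter words $\omega'$ and $\omega''$ then reconstructs $|\F(\omega,k)|$ by the identical rule that Lemma \ref{lem:XD-Stirling-1st-kind-recurrence} gives for ${{\omega}\brack{k}}$, and the matching initial conditions close the induction.
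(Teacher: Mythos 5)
Your proposal is correct and follows essentially the same route as the paper's proof: split via the standard factorization $\omega=x\omega'D\omega''$, show $|\F(\omega,k)|$ satisfies the two recurrences of Lemma \ref{lem:XD-Stirling-1st-kind-recurrence} (disjoint-union convolution for part (i), deletion/attachment at the dominating vertex $m$ for part (ii)), and close the induction using the common initial conditions. Your write-up is in fact slightly more careful than the paper's, since you make explicit why maximality of the label $m$ forces it to be a root and why dominance plus maximality legitimizes every choice of $\binom{\ell}{k_1-1}$ attachments.
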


\begin{proof}
In the standard factorization $\omega=x\omega'D\omega''$, let $m$ be the semi-length of the first block $\mu=x\omega'D$.
Note that the graph $G_{\omega}$ is the disjoint union of $G_{\mu}$ of $G_{\omega''}$. Any forest
$\gamma\in\F(\omega,k)$ is a disjoint union of a member $\alpha \in\F(\mu,k_1)$ and $\beta \in\F(\omega'',k-k_1)$ for some $k_1$ ($1\le k_1\le m$). Hence $|\F({\omega},k)|$ satisfies the relation $|\F({\omega},k)| = \sum_{k_1=1}^{m} |\F({\mu},k_1)|\cdot |\F({\omega''},k-k_1)|$.

For the first block $\mu=x\omega'D$,
the graph $G_{\omega'}$ is obtained from $G_{\mu}$ by removing the dominating vertex $m$.
For any forest $\alpha\in\F(\mu,k_1)$, removing the vertex $m$ from $\alpha$ leads to a forest $\alpha\cap G_{\omega'}\in\F(\omega',\ell)$
 for some $\ell$ ($k_1-1\le \ell\le m-1$).
Moreover, the forest $\alpha$ can be constructed from a forest $\beta\in\F({\omega'},\ell)$
by joining $\ell-k_1+1$ components of $\beta$ to the vertex $m$.
Since there are ${{\ell}\choose {\ell-k_1+1}}={{\ell}\choose {k_1-1}}$
ways to choose  $\ell-k_1+1$ components from $\beta$, $|\F({\mu},k_1)|$
satisfies the relation $|\F({\mu},k_1)|=\sum_{\ell=k_1-1}^{m-1} |\F({\omega'},\ell)| {{\ell}\choose {k_1-1}}$.

By Lemma \ref{lem:XD-Stirling-1st-kind-recurrence},  the numbers $|F({\omega},k)|$ and ${{\omega}\brack {k}}$
share the same recurrence relations. The assertion follows.
\end{proof}

\begin{exa} {\rm
For the word $\omega=xxDxxDDD$, we have $\omega=-2xD+5(xD)^2-4(xD)^3+(xD)^4$.
The graph $G_{\omega}$ is shown in Figure \ref{fig:word-graph}. For $1\le k\le 4$, the sets $\F(\omega,k)$ of partitions of $G_{\omega}$
into $k$-component decreasing forests are shown in Figure \ref{fig:k-comp-forests}.
}
\end{exa}

\begin{figure}[ht]
\begin{center}
\includegraphics[width=4.4in]{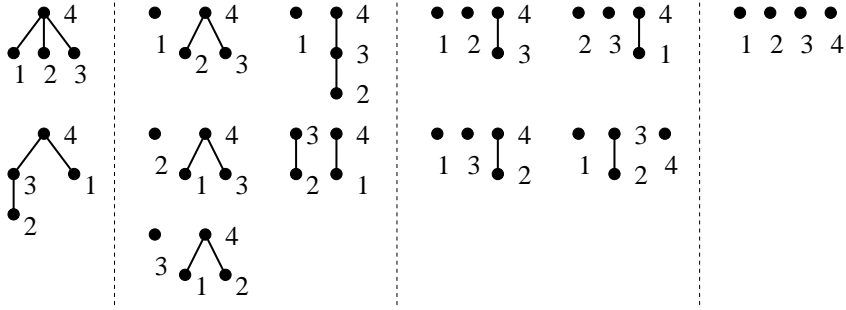}
\end{center}
\caption{\small The members in $\F(\omega,k)$ of the graph associated with the word $\omega=xxDxxDDD$.}
\label{fig:k-comp-forests}
\end{figure}

Setting $\omega=x^{n}D^{n}$ in Theorem \ref{thm:meaning-Stir-1st-Dyck}, the graph $G_{\omega}$ is the complete graph on vertices $[n]$ and hence $|\F(\omega,k)|={{n}\brack {k}}$. This proves the identity in Eq.\,(\ref{eq:XD-1st-Stirling}).

\subsection{The $xD$-Lah numbers.} For any Duck word $\omega\in\C_n$, we shall prove that the $xD$-Lah number ${{\omega}\bangle {k}}$ coincides with the number of ways
to partition the graph $G_{\omega}$ into a disjoint union of $k$ decreasing forests.
The proof is similar to the proof of Theorem \ref{thm:meaning-Lah-Dyck}, with the initial conditions ${{\omega}\bangle {0}}=\delta_{n,0}$ for $n\ge 0$ and ${{\omega}\bangle {k}}=0$ for $0\le n<k$.

The following derivative identity will be used to derive recurrence relation for ${{\omega}\bangle {k}}$.

\begin{lem} \label{lem:derivative} For all $n\ge 1$ and $m\ge 1$, we have
\[
x^{m}D^{n}=\sum_{j\ge 0} (-1)^{j}{{m}\choose {j}}{{n}\choose {j}} j! D^{n-j}x^{m-j}.
\]
\end{lem}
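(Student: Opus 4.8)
The plan is to prove the operator identity
\[
x^{m}D^{n}=\sum_{j\ge 0} (-1)^{j}\binom{m}{j}\binom{n}{j} j!\, D^{n-j}x^{m-j}
\]
by induction. Since the left side is a single monomial and the right side expresses it with all the $x$'s pushed to the \emph{right} of the $D$'s, the natural engine is the anti-normal-ordering commutation rule $Dx=xD+1$, equivalently $xD=Dx-1$. I would set up a double induction on $m$ and $n$, peeling off one factor at a time and invoking the Pascal-type recurrence satisfied by the coefficients $(-1)^{j}\binom{m}{j}\binom{n}{j}j!$.

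First I would fix the base cases: for $m=0$ or $n=0$ both sides collapse (only the $j=0$ term survives on the right, giving $D^{n}$ or $x^{m}$ respectively), so the identity holds trivially; and for $m=n=1$ one checks $xD = Dx - 1 = \binom{1}{0}^2 0!\,Dx - \binom{1}{1}^2 1!\,D^{0}x^{0}$, matching the two terms $j=0,1$. For the inductive step I would write $x^{m}D^{n}=x\cdot(x^{m-1}D^{n})$, apply the induction hypothesis to $x^{m-1}D^{n}$ to get a sum of terms $D^{n-j}x^{m-1-j}$, and then move the leading $x$ rightward across the block $D^{n-j}$. The key computational tool here is the single-variable commutation $x D^{r} = D^{r}x - r D^{r-1}$ (itself an easy induction from $xD=Dx-1$), so that $x\cdot D^{n-j}x^{m-1-j}=D^{n-j}x^{m-j}-(n-j)D^{n-j-1}x^{m-j}$. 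Reindexing the two resulting families of terms and collecting the coefficient of a fixed $D^{n-j}x^{m-j}$ reduces the whole step to verifying the recurrence
\[
(-1)^{j}\binom{m}{j}\binom{n}{j}j!
=(-1)^{j}\binom{m-1}{j}\binom{n}{j}j!+(-1)^{j}\binom{m-1}{j-1}\binom{n}{j-1}(j-1)!\,(n-j+1),
\]
which is a purely elementary binomial identity once one cancels the sign and simplifies $\binom{n}{j-1}(n-j+1)=\binom{n}{j}\,j$ and uses Pascal's rule $\binom{m}{j}=\binom{m-1}{j}+\binom{m-1}{j-1}$.

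I expect the main obstacle to be purely bookkeeping rather than conceptual: the two sums produced after commuting $x$ past $D^{n-j}$ are indexed differently (one by $j$, one by $j+1$ after the drop in the power of $D$), so I would need to shift indices carefully and confirm that the boundary terms—the top term $j=\min(m,n)$ and the bottom term $j=0$—are handled correctly, particularly that the factor $(n-j)$ vanishes or the binomial $\binom{n}{j}$ vanishes exactly where the range of $j$ would otherwise overshoot. An alternative, cleaner route that avoids this reindexing is to prove the identity directly on polynomials: applying both sides to $x^{t}$ and using $D^{r}x^{s}=\frac{s!}{(s-r)!}x^{s-r}$ reduces the claim to the Vandermonde-type evaluation $\sum_{j}(-1)^{j}\binom{m}{j}\binom{n}{j}j!\,\frac{(t+m-j)!}{(t+m-j-(n-j))!}\cdot(\text{falling factorial})=\frac{t!}{(t-n)!}x^{m+t-n}$, which collapses by Vandermonde's convolution. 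Either way the engine is a finite binomial summation; I would present the commutation-based induction as the primary argument and note the polynomial-evaluation check as a sanity verification.
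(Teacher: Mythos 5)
Your proposal follows essentially the same route as the paper's proof: induction on $m$ via $x^{m}D^{n}=x\,(x^{m-1}D^{n})$, with the single-variable rule $xD^{r}=D^{r}x-rD^{r-1}$ (proved first, as the $m=1$ case) and the coefficient recurrence $\binom{m-1}{j}\binom{n}{j}j!+\binom{m-1}{j-1}\binom{n}{j-1}(j-1)!\,(n-j+1)=\binom{m}{j}\binom{n}{j}j!$ closing the induction, exactly as in the paper, which asserts this collection step without writing out the binomial identity you verify. The only flaw is typographical: the commutation should read $x\,D^{n-j}x^{m-1-j}=D^{n-j}x^{m-j}-(n-j)D^{n-j-1}x^{m-1-j}$ (your final exponent $m-j$ should be $m-1-j$), and it is this corrected form that makes the reindexing produce the recurrence you then check.
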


\begin{proof}  For $m=1$, we prove $xD^{n}=D^{n}x-nD^{n-1}$ by induction on $n$. For $n=1$, it is the relation $Dx=xD+1$ of the Weyl algebra. For $n\ge 2$, we observe that
\begin{align*}
xD^n = (xD^{n-1})D &= (D^{n-1}x-(n-1)D^{n-2})D \\
     &= D^{n-1}(Dx-1)-(n-1) D^{n-1}\\
     &= D^{n}x-nD^{n-1},
\end{align*}
as required. Suppose the assertion holds for all $m< k$ and $n\ge 1$. For $m=k$ and $n=1$, the identity $x^{k}D=Dx^{k}-kx^{k-1}$ can be proved in a similar manner as above. For $n\ge2$, we observe that
\begin{align*}
x^kD^n = x(x^{k-1}D^{n}) &= \sum_{j\ge 0} (-1)^{j}{{k-1}\choose {j}}{{n}\choose {j}} j! (xD^{n-j})x^{k-1-j} \\
     &= \sum_{j\ge 0} (-1)^{j}{{k-1}\choose {j}}{{n}\choose {j}} j! (D^{n-j}x^{k-j}-(n-j)D^{n-1-j}x^{k-1-j}).\\
\end{align*}
The coefficient of $D^{n-j}x^{k-j}$ is
\[
(-1)^{j}{{k-1}\choose {j}}{{n}\choose {j}} j!-(-1)^{j-1}{{k-1}\choose {j-1}}{{n}\choose {j-1}} (j-1)! (n-j+1)=(-1)^{j}{{k}\choose {j}}{{n}\choose {j}} j!,
\]
as required.
\end{proof}


\begin{lem} \label{lem:XD-Lah-number-recurrence} Given a Dyck word $\omega\in\C_n$ with a standard factorization $\omega=x\omega'D\omega''$, let $m$ be the semi-length of the first block $\mu=x\omega'D$. Then the following relations hold.
\begin{enumerate}
\item For $1\le k\le n$, we have
\[
{{\omega}\bangle {k}} = \sum_{k_1=0}^{m}\sum_{k_2=0}^{n-m} {{\mu}\bangle {k_1}}{{\omega''}\bangle {k_2}}
 {{k_1}\choose {k_1+k_2-k}}{{k_2}\choose {k_1+k_2-k}}(k_1+k_2-k)!.
\]
\item For the first block $\mu=x\omega'D$ and $1\le k_1\le m$, we have
\[
{{\mu}\bangle {k_1}}={{\omega'}\bangle {k_1-1}}+2k_1{{\omega'}\bangle {k_1}}+(k_1+k_1^2){{\omega'}\bangle {k_1+1}}.
\]
\end{enumerate}
\end{lem}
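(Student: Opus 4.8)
The plan is to prove both recurrences by substituting the defining expansion (\ref{eq:word-Lah-number}) into the standard factorization and then re-expressing products of the basis operators back in the same basis. Throughout, write $B_k:=xD^kx^{k-1}$, so that (\ref{eq:word-Lah-number}) reads $\omega=\sum_k(-1)^{n-k}{{\omega}\bangle{k}}B_k$. The single computational engine for both parts is the derivative identity of Lemma \ref{lem:derivative}, which lets me normal-order an interior block $x^aD^b$.

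For part (i), I would start from $\omega=\mu\omega''$ and insert the expansions of $\mu$ and of $\omega''$, reducing the whole statement to a product rule expanding $B_{k_1}B_{k_2}$ into the $B_k$. Since $B_{k_1}B_{k_2}=xD^{k_1}x^{k_1}D^{k_2}x^{k_2-1}$, the only obstruction to collapsing this into a single $B$ is the interior block $x^{k_1}D^{k_2}$; applying Lemma \ref{lem:derivative} to that block produces $\sum_{j\ge0}(-1)^j\binom{k_1}{j}\binom{k_2}{j}j!\,D^{k_2-j}x^{k_1-j}$, and the surrounding $D^{k_1}$ and $x^{k_2-1}$ merge the $j$-th term into $B_{k_1+k_2-j}$. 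Reindexing by $k=k_1+k_2-j$ and checking that the three sign factors $(-1)^{m-k_1}$, $(-1)^{n-m-k_2}$ and $(-1)^{k_1+k_2-k}$ telescope to $(-1)^{n-k}$, I then read off the coefficient of $B_k$, which is exactly the double sum in (i).

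For part (ii), I would substitute the expansion of $\omega'$ into $\mu=x\omega'D$, so that $\mu=\sum_\ell(-1)^{m-1-\ell}{{\omega'}\bangle{\ell}}\,xB_\ell D$, whereupon the claim reduces to the single three-term operator identity
\[
xB_\ell D=B_{\ell+1}-2\ell B_\ell+\ell(\ell-1)B_{\ell-1}.
\]
The cleanest verification exploits the faithful action of $W$ on $\mathbb{C}[x]$: a direct computation shows that $B_k$ is diagonal on monomials, with $B_kx^p=p(p+1)\cdots(p+k-1)\,x^p$, so that, after cancelling the common factor $p(p+1)\cdots(p+\ell-2)$, the displayed identity is equivalent to the scalar identity $(p+\ell-1)(p+\ell)-2\ell(p+\ell-1)+\ell(\ell-1)=p(p-1)$, which is immediate (the small cases $\ell=0,1$ are consistent because the coefficients $\ell(\ell-1)$ and $2\ell$ vanish, killing the meaningless terms $B_{-1}$). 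Feeding this identity back in, collecting the coefficient of $B_{k_1}$ from the three contributions $\ell=k_1-1,k_1,k_1+1$, and using $k_1(k_1+1)=k_1+k_1^2$, yields (ii).

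The main obstacle is the three-term identity in part (ii): part (i) is essentially a bookkeeping consequence of Lemma \ref{lem:derivative} together with sign tracking, whereas (ii) hinges on pinning down the precise coefficients $1,-2\ell,\ell(\ell-1)$. The diagonalization observation $B_kx^p=p(p+1)\cdots(p+k-1)x^p$ is what makes this transparent and sidesteps an error-prone direct commutator reduction inside $W$; verifying it (by applying $x^{k-1}$, then $D^k$, then $x$ to $x^p$) and invoking faithfulness of the polynomial representation is the key step I would carry out first.
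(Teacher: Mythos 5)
Your proposal is correct, and it splits cleanly into one part that mirrors the paper and one part that deviates. Part (i) is exactly the paper's argument: expand $\mu$ and $\omega''$ via Eq.\,(\ref{eq:word-Lah-number}), observe that the only obstruction in the product $xD^{k_1}x^{k_1}D^{k_2}x^{k_2-1}$ is the interior block $x^{k_1}D^{k_2}$, normal-order it with Lemma \ref{lem:derivative}, and extract the coefficient of $xD^{k}x^{k-1}$; your sign bookkeeping $(-1)^{m-k_1}(-1)^{n-m-k_2}(-1)^{k_1+k_2-k}=(-1)^{n-k}$ is the same as (and slightly more explicit than) what the paper does. Part (ii) reaches the same pivotal three-term identity $x\bigl(xD^{\ell}x^{\ell-1}\bigr)D=xD^{\ell+1}x^{\ell}-2\ell\,xD^{\ell}x^{\ell-1}+\ell(\ell-1)\,xD^{\ell-1}x^{\ell-2}$, but you verify it by a genuinely different means: the paper stays inside $W$, applying Lemma \ref{lem:derivative} in the forms $xD^{\ell}=D^{\ell}x-\ell D^{\ell-1}$ and $x^{\ell-1}D=Dx^{\ell-1}-(\ell-1)x^{\ell-2}$ and then expanding (with one further use of $xD=Dx-1$), whereas you pass to the polynomial representation, note that $xD^{k}x^{k-1}$ acts diagonally on monomials with eigenvalue $p(p+1)\cdots(p+k-1)$ on $t^{p}$, and reduce the operator identity to the scalar identity $(p+\ell-1)(p+\ell)-2\ell(p+\ell-1)+\ell(\ell-1)=p(p-1)$. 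I checked both your eigenvalue formula and this scalar identity; they are correct, and the degenerate cases $\ell=0,1$ are handled as you say. Your route makes the coefficients $1,-2\ell,\ell(\ell-1)$ transparent and avoids the error-prone noncommutative expansion, at the cost of invoking faithfulness of the polynomial representation of the Weyl algebra -- a standard fact in characteristic zero, and one very much in the spirit of the paper's own Section 5, where words are evaluated on $t^{z}$ precisely to extract coefficient identities. The paper's route is self-contained within the algebra; either is acceptable, but if you use the representation argument you should state the faithfulness step explicitly, since equality of actions on all monomials is what licenses equality in $W$.
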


\begin{proof}
(i) By Eq.\,(\ref{eq:word-Lah-number}), we observe that
\begin{align}
\mu\omega'' &= \left(\sum_{k_1=0}^{m} (-1)^{m-k_1} {{\mu}\bangle {k_1}} xD^{k_1}x^{k_1-1} \right)\left(\sum_{k_2=0}^{n-m} (-1)^{n-m-k_2} {{\omega''}\bangle {k_2}} xD^{k_2}x^{k_2-1} \right) \\
     &= \sum_{k_1=0}^{m}\sum_{k_2=0}^{n-m} (-1)^{n-k_1-k_2} {{\mu}\bangle {k_1}}{{\omega''}\bangle {k_2}}  \label{eq:Leibniz-rule} xD^{k_1}x^{k_1}D^{k_2}x^{k_2-1}.
\end{align}
By Lemma \ref{lem:derivative}, we have
\[
x^{k_1}D^{k_2} =\sum_{j\ge 0} (-1)^j {{k_1}\choose {j}} {{k_2}\choose {j}} {j!} D^{k_2-j}x^{k_1-j}.
\]
Substituting back to Eq.\,(\ref{eq:Leibniz-rule}) and extracting the coefficient of $xD^{k}x^{k-1}$ on both sides, we have
\[
{{\omega}\bangle {k}} =  \sum_{k_1=0}^{m}\sum_{k_2=0}^{n-m} {{\mu}\bangle {k_1}}{{\omega''}\bangle {k_2}} {{k_1}\choose {k_1+k_2-k}}{{k_2}\choose {k_1+k_2-k}}(k_1+k_2-k)!.\\
\]

(ii) Making use of the identities in Lemma \ref{lem:derivative}, we observe that
\begin{align*}
\mu= x\omega' D &= x\left(\sum_{\ell=0}^{m-1} (-1)^{m-1-\ell} {{\omega'}\bangle {\ell}} xD^{\ell}x^{\ell-1} \right)D \\
      &= x\left(\sum_{\ell=0}^{m-1} (-1)^{m-1-\ell}  {{\omega'}\bangle {\ell}} (D^{\ell}x-\ell D^{\ell-1})\big(Dx^{\ell-1}-(\ell-1)x^{\ell-2}\big)\right) \\
      &= \sum_{\ell=0}^{m-1} (-1)^{m-1-\ell} {{\omega'}\bangle {\ell}}\big(xD^{\ell+1}x^{\ell}-2\ell xD^{\ell}x^{\ell-1}+\ell(\ell-1)xD^{\ell-1}x^{\ell-2}\big).
\end{align*}
Extracting the coefficient of $(xD)^{k_1}$ on both sides, the assertion follows.
\end{proof}

\smallskip
Now, we give a combinatorial interpretation of ${{\omega}\bangle {k}}$ for Dyck words $\omega$.
Let $\H({\omega},k)$ be the collection of partitions of $G_{\omega}$ into a disjoint union of $k$ decreasing forests. We assume $|\H({\omega},0)|=\delta_{n,0}$ for $n\ge 0$ and $|\H({\omega},k)|=0$ for $0\le n<k$.

\begin{thm} \label{thm:meaning-Lah-Dyck} For any word $\omega\in\C_n$ and $1\le k\le n$, we have
\[
|\H({\omega},k)|={{\omega}\bangle {k}}.
\]
\end{thm}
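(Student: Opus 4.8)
The plan is to mirror the proof of Theorem~\ref{thm:meaning-Stir-1st-Dyck}: I will show that the numbers $|\H(\omega,k)|$ obey exactly the two recurrence relations recorded for ${{\omega}\bangle{k}}$ in Lemma~\ref{lem:XD-Lah-number-recurrence}, and then, since the two families share the same initial conditions, the equality $|\H(\omega,k)|={{\omega}\bangle{k}}$ follows by induction on the semi-length $n$. Throughout I use the standard factorization $\omega=x\omega'D\omega''$, the decomposition $G_{\omega}=G_{\mu}\sqcup G_{\omega''}$ into the first block $\mu=x\omega'D$ and the remainder, and the fact that $G_{\mu}$ is obtained from $G_{\omega'}$ by adjoining the vertex $m$, which is both dominating and of maximal label. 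Because a decreasing tree carries its largest label at the root, in any forest partition counted by $\H(\mu,k_1)$ the vertex $m$ is necessarily the root of the tree containing it.

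For relation (i) I would argue as in the Stirling case, but now permitting several forests. A member $\gamma\in\H(\omega,k)$ restricts to a partition $\alpha$ of $G_{\mu}$ and a partition $\beta$ of $G_{\omega''}$; since $G_{\mu}$ and $G_{\omega''}$ are joined by no edge, a class of $\gamma$ is simply the disjoint union of one class of $\alpha$ and one class of $\beta$ (either one possibly absent). Thus $\gamma$ is reconstructed from an $\alpha\in\H(\mu,k_1)$ and a $\beta\in\H(\omega'',k_2)$ by selecting $r:=k_1+k_2-k$ classes on each side and matching them bijectively, each matched pair being fused into a single decreasing forest. There are ${k_1\choose r}{k_2\choose r}r!$ such choices, which reproduces relation (i).

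The delicate step is relation (ii), the insertion of the dominating maximal vertex $m$ into a forest partition of $G_{\omega'}$, and this is where I expect the main difficulty. The naive ``delete $m$'' map does \emph{not} split $\H(\mu,k_1)$ according to the three summands, so I would instead classify a partition $P\in\H(\mu,k_1)$ by the pair $(S,T)$, where $S$ is the set of children of $m$ (the roots it has adopted) and $T$ is the set of the remaining trees in the class of $m$. Deleting $m$ turns its class into the decreasing forest whose trees are the subtrees hanging from $S$ together with $T$. The trichotomy ``$S=T=\emptyset$'', ``exactly one of $S,T$ empty'', and ``$S,T\neq\emptyset$'' should match the three terms: in the first case $m$ forms a new class and we land in $\H(\omega',k_1-1)$ with multiplicity $1$; in the second case the class of $m$ survives deletion, so we land in $\H(\omega',k_1)$ after choosing which of the $k_1$ classes carries $m$ and which of the two extreme adoptions occurred ($m$ isolated, or $m$ adopting every root of that class), giving multiplicity $2k_1$; in the third case, splitting the class of $m$ into its $S$-part $C_1$ and its $T$-part $C_2$ produces a member of $\H(\omega',k_1+1)$ together with an ordered pair $(C_1,C_2)$ of distinct classes, giving multiplicity $(k_1+1)k_1$. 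Checking that this assignment is a genuine bijection — in particular that the inverse ``merge an ordered pair of classes and let $m$ adopt the roots of the first block'' is well defined and exhausts the third term — is the crux; the multiplicities $1,\,2k_1,\,k_1(k_1+1)$ fall out precisely from this $(S,T)$-trichotomy.

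Once both combinatorial recurrences are established, they coincide with the algebraic recurrences of Lemma~\ref{lem:XD-Lah-number-recurrence} for ${{\omega}\bangle{k}}$. Since $\omega'$ and $\omega''$ both have semi-length strictly smaller than $n$, the two recurrences reduce every quantity to words of smaller semi-length (when $\omega''$ is empty, relation (i) is trivial and relation (ii) performs the reduction $\mu\mapsto\omega'$). Matching the base cases $|\H(\omega,0)|=\delta_{n,0}$ and $|\H(\omega,k)|=0$ for $0\le n<k$ with the stated initial conditions, the induction on semi-length closes and yields $|\H(\omega,k)|={{\omega}\bangle{k}}$. Specializing to $\omega=x^{n}D^{n}$, where $G_{\omega}$ is the complete graph $K_n$, recovers the classical Lah number as a consistency check.
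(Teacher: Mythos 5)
Your proposal is correct and follows essentially the same route as the paper: both establish the two combinatorial recurrences of Lemma~\ref{lem:XD-Lah-number-recurrence} for $|\H(\omega,k)|$ (merging classes across the disjoint union $G_{\mu}\sqcup G_{\omega''}$ for relation (i), and inserting the dominating maximal vertex $m$ for relation (ii)), then conclude by induction on semi-length with the same initial conditions. Your $(S,T)$-trichotomy is in fact a slightly more careful rendering of the paper's four-case analysis, since the paper loosely describes the case $S,T\neq\emptyset$ as ``removing $m$ gives $\ell=k_1+1$'' even though literal deletion of $m$ merges the two classes and yields $\ell=k_1$; your splitting of the class of $m$ into its $S$-part and $T$-part makes the bijection with $\H(\omega',k_1+1)$ together with an ordered pair of distinct classes explicit.
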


\begin{proof} In the standard factorization $\omega=x\omega'D\omega''$, let $m$ be the semi-length of the first block $\mu=x\omega'D$. We shall prove that the cardinality of $\H({\omega},k)$ satisfies the following relations.
\begin{enumerate}
\item For $1\le k\le n$, we have
\[
|\H({\omega},k)| = \sum_{k_1=0}^{m}\sum_{k_2=0}^{n-m} |\H({\mu},k_1)|\cdot |\H({\omega''},k-k_1)|{{k_1}\choose {k_1+k_2-k}}{{k_2}\choose {k_1+k_2-k}}(k_1+k_2-k)!.
\]
\item For the first block $\mu=x\omega'D$ and $1\le k_1\le m$, we have
\[
|\H({\mu},k_1)|= |\H({\omega'},k_1-1)|+2k_1|\H({\omega'},k_1)|+ (k_1+k_1^2)|\H({\omega'},k_1+1)|.
\]
\end{enumerate}
Note that $G_{\omega}$ is a disjoint union of $G_{\mu}$ and $G_{\omega''}$. Any forest $\gamma\in\H({\omega},k)$ can be constructed from a member $\alpha\in\H({\mu},k_1)$ and a member $\beta\in\H({\omega''},k_2)$ for some $k_1,k_2$ with $k_1+k_2\ge k$ such that $\gamma$ consists of the forests from the following categories.
\begin{itemize}
\item Choose $k_1+k_2-k$ forests from $\alpha$ and choose $k_1+k_2-k$ forests from $\beta$. Use one-to-one correspondence to merge the two families of forests into $k_1+k_2-k$ forests.
\item The remaining $k-k_2$ forests of $\alpha$.
\item The remaining $k-k_1$ forests of $\beta$.
\end{itemize}
The right-hand side of the equation in (i) is exactly the possibilities of $\gamma\in\H({\omega},k)$.

For the first block $\mu=x\omega' D$, the graph $G_{\omega'}$ is obtained from $G_{\mu}$ by removing the dominating vertex $m$.  For any forest $\alpha\in\H({\mu},k_1)$, removing the vertex $m$ from $\alpha$ leads to a forest $\alpha\cap G_{\omega'}\in\H({\omega'},\ell)$ for some $\ell\in\{k_1-1,k_1,k_1+1\}$. Moreover, the forest $\alpha$ can be constructed from a forest $\beta\in\H({\omega'},\ell)$ according to the following cases.
\begin{itemize}
\item $\ell=k_1-1$. The forest $\alpha$ is obtained from $\beta$ by adding the $k_1$th forest, consisting of the vertex $m$.
\item $\ell=k_1$. The forest $\alpha$ is obtained from $\beta$ by adding the vertex $m$ as a trivial tree to one of the $k_1$ forests of $\beta$.
\item $\ell=k_1$. Choose one of the $k_1$ forests of $\beta$, say $F$. The forest $\alpha$ is  obtained from $\beta$ by joining all of the components of $F$ to the vertex $m$.
\item $\ell=k_1+1$. Choose one of the $k_1+1$ forests of $\beta$, say $F$, and turn $F$ into a tree $T$ by joining all of the components of $F$ to the vertex $m$. The forest $\alpha$ is obtained from $\beta$ by adding $T$ to one of the remaining $k_1$ forests of $\beta$.
\end{itemize}
The right-hand side of the equation in (ii) is exactly the possibilities of $\alpha\in\H({\mu},k_1)$.
\end{proof}

\begin{exa} {\rm
For the word $\omega=xxDxxDDD$, we have $\omega=-12xD+24(xD^2x)-10(xD^3x^2)+(xD^4x^3)$. The 24 ways to partition $G_{\omega}$ into a disjoint union of 2 deceasing forests are shown in Figure \ref{fig:24-disj-2-forests}.
}
\end{exa}

\begin{figure}[ht]
\begin{center}
\includegraphics[width=4.8in]{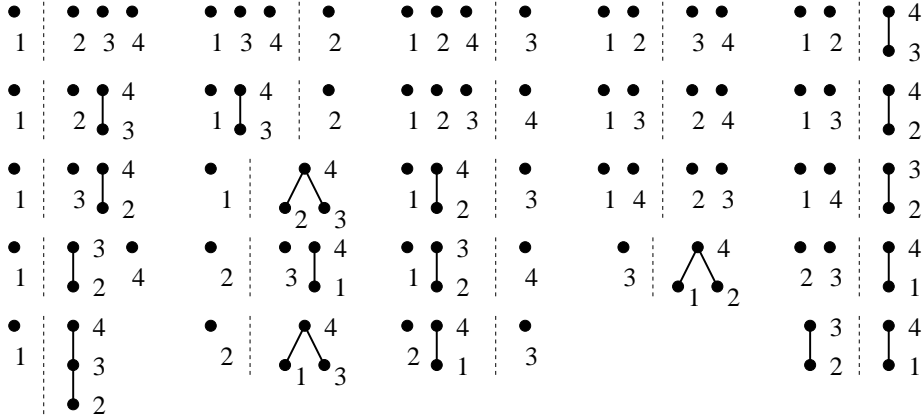}
\end{center}
\caption{\small The partitions of the graph $G_{\omega}$ into 2 decreasing forests for $\omega=xxDxxDDD$.}
\label{fig:24-disj-2-forests}
\end{figure}

Setting $\omega=x^{n}D^{n}$ in Theorem \ref{thm:meaning-Lah-Dyck}, the graph $G_{\omega}$ is the complete graph on vertices $[n]$ and hence $|\H(\omega,k)|={{n}\bangle {k}}$. This proves the identity in Eq.\,(\ref{eq:XD-Lah-number}).

\section{On $q$-analogues of Stirling numbers and Lah numbers}

\subsection{A $q$-analogue of the $xD$-Stirling number of the 1st kind}
Recall that for a Dyck word $\omega\in\C_n$ in the $q$-deformed Weyl algebra $W$, the $q$-analogue of the $xD$-Stirling number of the first kind, ${{\omega}\brack {k}}_q$, is defined by the expansion in Eq.\,(\ref{eq:q-word-1st-Stirling}).
With the standard factorization $\omega=x\omega'D\omega''$ of $\omega$, let $m$ be the semi-length of the first block $\mu=x\omega'D$. Making use of the relation $xD=q^{-1}(Dx-1)$ and the same argument as in the proof of Lemma \ref{lem:XD-Stirling-1st-kind-recurrence}, it is straightforward to derive the following relations, with the initial conditions ${{\omega}\brack {0}}_q=\delta_{n,0}$ for $n\ge 0$ and ${{\omega}\brack {k}}_q=0$ for $0\le n< k$.

\begin{enumerate}
\item For $1\le k\le n$, we have
\[
{{\omega}\brack {k}}_q = \sum_{k_1=1}^{m} {{\mu}\brack {k_1}}_q {{\omega'}\brack {k-k_1}}_q.
\]
\item For the first block $\mu=x\omega'D$ and $1\le k_1\le m$, we have
\[
{{\mu}\brack {k_1}}_q=\sum_{\ell=k_1-1}^{m-1} q^{-\ell}{{\omega'}\brack {\ell}}_q {{\ell}\choose {k_1-1}}.
\]
\end{enumerate}

In the following, we present a combinatorial interpretation of ${{\omega}\brack {\ell}}_q$ by defining a weight function for the forests in $\F({\omega},k)$.

We write a decreasing forest $F$ in a \emph{canonical form} such that the components are arranged in increasing order of the roots from left to right. Moreover, if a vertex has more than one child then
the children are in increasing order from left to right.
Given a Dyck word $\omega\in\C_n$ with the quasi-threshold graph $G=G_{\omega}$, let $G_{i}$ be the induced subgraph of $G_{\omega}$ on the vertices $\{1,2,\dots,i\}$. Let $Q_i$ be the component of $G_{i}$ containing the vertex $i$ and let $Q^*_i$ be the graph obtained from $Q_i$ by removing the vertex $i$. For a forest $\alpha\in\F(\omega,k)$, let $t_i(\alpha)$ be the number of components in the graph $\alpha\cap Q^*_i$ for $1\le i\le n$, and define the weight $\wt(\alpha)$ of $\alpha$ by
\[
\wt(\alpha):=t_1(\alpha)+t_2(\alpha)+\cdots+t_n(\alpha).
\]
Let $f_q(\omega,k)$ denote the (negative) weight polynomial for $\F({\omega},k)$ defined as
\[
f_q(\omega,k)=\sum_{\alpha\in\F(\omega,k)} q^{-\wt(\alpha)}.
\]

\begin{thm} \label{thm:weighted-q-xD-Stirling-1st-kind} For any word $\omega\in\C_n$ and $1\le k\le n$, we have
\[
f_q(\omega,k)={{\omega}\brack {k}}_q.
\]
\end{thm}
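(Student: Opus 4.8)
The plan is to prove the identity by induction on the semi-length $n$, showing that the weight polynomial $f_q(\omega,k)$ obeys exactly the two recurrences displayed above for ${{\omega}\brack{k}}_q$, together with the same initial conditions. For $n=0$ the only member of $\F(\omega,0)$ is the empty forest, which has weight $0$, so $f_q(\omega,0)=1=\delta_{0,0}$; and for $0\le n<k$ the set $\F(\omega,k)$ is empty, so $f_q(\omega,k)=0$. These match the conventions for ${{\omega}\brack{k}}_q$. The scheme simply refines the unweighted argument of Theorem \ref{thm:meaning-Stir-1st-Dyck}, the only new content being the behavior of the statistic $\wt$ under the two structural operations that build $G_{\omega}$.

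First I would treat recurrence (i). Since $G_{\omega}$ is the disjoint union of $G_{\mu}$ on vertices $\{1,\dots,m\}$ and $G_{\omega''}$ on vertices $\{m+1,\dots,n\}$, with no edges between the two blocks, every component $Q_i$ lies entirely inside a single block: for $i\le m$ it sits in $G_{\mu}$, and for $i>m$ it sits in $G_{\omega''}$ and coincides, after the relabeling $j\mapsto j-m$, with the corresponding component for $\omega''$. Hence for a forest $\alpha\in\F(\omega,k)$ that decomposes as $\alpha=\alpha_{\mu}\sqcup\alpha_{\omega''}$ with $\alpha_{\mu}\in\F(\mu,k_1)$ and $\alpha_{\omega''}\in\F(\omega'',k-k_1)$, the local counts satisfy $t_i(\alpha)=t_i(\alpha_{\mu})$ for $i\le m$ and $t_i(\alpha)=t_{i-m}(\alpha_{\omega''})$ for $i>m$, so that $\wt(\alpha)=\wt(\alpha_{\mu})+\wt(\alpha_{\omega''})$. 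Summing $q^{-\wt(\alpha)}$ over the decomposition used in Theorem \ref{thm:meaning-Stir-1st-Dyck} then factors as a product, yielding $f_q(\omega,k)=\sum_{k_1=1}^{m} f_q(\mu,k_1)\,f_q(\omega'',k-k_1)$, which is recurrence (i).

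The heart of the proof is recurrence (ii), where I must track how $\wt$ changes upon inserting the dominating vertex $m$. Recall from Theorem \ref{thm:meaning-Stir-1st-Dyck} that each $\alpha\in\F(\mu,k_1)$ arises from a unique $\beta\in\F(\omega',\ell)$, with $k_1-1\le\ell\le m-1$, by making $m$ the root of a tree whose children are the roots of $\ell-k_1+1$ chosen components of $\beta$, and that there are $\binom{\ell}{k_1-1}$ such choices. Since $m$ is the largest label it can appear only as a root and creates no edges among vertices $<m$, so the induced forest of $\alpha$ on any subset of $\{1,\dots,m-1\}$ agrees with that of $\beta$; this gives $t_i(\alpha)=t_i(\beta)$ for every $i<m$. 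At $i=m$ the vertex $m$ dominates $G_{\mu}$, so $Q_m=G_{\mu}$ and $Q_m^{\ast}=G_{\omega'}$; thus $\alpha\cap Q_m^{\ast}=\beta$ has exactly $\ell$ components, i.e. $t_m(\alpha)=\ell$. Consequently $\wt(\alpha)=\wt(\beta)+\ell$, and summing over the correspondence gives $f_q(\mu,k_1)=\sum_{\ell=k_1-1}^{m-1} q^{-\ell}\binom{\ell}{k_1-1}f_q(\omega',\ell)$, which is recurrence (ii).

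I expect the main obstacle to be precisely this weight bookkeeping in (ii): one must verify that the newly inserted dominating vertex contributes the single new term $t_m(\alpha)=\ell$ while leaving all earlier counts $t_i(\alpha)$ with $i<m$ untouched, so that the net effect is exactly the factor $q^{-\ell}$ predicted by the reduction $xD=q^{-1}(Dx-1)$. Once both recurrences and the initial conditions are established, the induction closes and $f_q(\omega,k)={{\omega}\brack{k}}_q$ follows.
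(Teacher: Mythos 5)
Your proposal is correct and takes essentially the same approach as the paper: both establish that $f_q(\omega,k)$ satisfies the two $q$-weighted recurrences (via the disjoint-union splitting of $G_{\omega}$ into $G_{\mu}$ and $G_{\omega''}$, and the insertion of the dominating vertex $m$ contributing weight $\ell$) and then conclude by induction. Your write-up is in fact slightly more careful than the paper's, since you explicitly verify $t_i(\alpha)=t_i(\beta)$ for $i<m$ and $t_m(\alpha)=\ell$, which the paper asserts without detail.
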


\begin{proof} (i) Since $G_{\omega}$ is a disjoint union of $G_{\mu}$ of $G_{\omega''}$, any decreasing forest $\gamma\in\F({\omega},k)$ is the union of $\gamma\cap G_{\mu} \in\F(\mu,k_1)$ and $\gamma\cap G_{\omega''} \in\F({\omega''},k-k_1)$ for some $k_1$ ($1\le k_1\le m$). Hence
\[
f_q(\omega,k)=\sum_{k_1=1}^{m} f_q(\mu,k)\cdot f_q(\omega'',k).
\]
(ii) Recall that the vertex $m$ is the dominating vertex in $G_{\mu}$. As shown in the proof of Theorem \ref{thm:meaning-Stir-1st-Dyck},
for any forest $\alpha\in\F({\mu},k_1)$, removing the vertex $m$ leads to a forest $\alpha\cap G_{\omega'}\in\F(\omega',\ell)$ for some $\ell$ ($k_1-1\le \ell\le m-1$), in which case the vertex $m$ contributes a weight of $\ell$ to the forest $\alpha$.
Moreover, the forest $\alpha$ can be constructed from a forest $\beta\in\F({\omega'},\ell)$
by joining $\ell-k_1+1$ components of $\beta$ to the vertex $m$.
Hence the polynomial $f_q(\mu,k_1)$ satisfies the relation
\[
f_q({\mu},k_1)=\sum_{\ell=k_1-1}^{m-1} q^{-\ell} f_q({\omega'},\ell){{\ell}\choose {k_1-1}}.
\]
The assertion follows from the observation that the polynomials $f_q({\omega},k)$ and ${{\omega}\brack {k}}_q$ share the same recurrence relation.
\end{proof}

\begin{exa} \label{exa:q-weight-k-forests} {\rm
For the word $\omega=xxDxxDDD$, the coefficients of the expansion
$\omega=\sum_{k=1}^{4} (-1)^{4-k}{{\omega}\brack {k}}_q (xD)^{k}$ are listed in
Table \ref{tab:q-xD-Stirling-1st}. For $1\le k\le 4$, the members in $\F(\omega,k)$,
along with their contributions to the $q$-polynomial $f_q(\omega,k)$, are shown in Figure \ref{fig:weight-k-comp-forests}.
}
\end{exa}

\begin{table}[ht]
\caption{The $q$-analogue of the $xD$-Stirling numbers ${{\omega}\brack {k}}_q$ for $\omega=xxDxxDDD$.}
\centering
\begin{tabular} {c|ccccccc}
\hline
$k$ &  1 & & 2 & & 3 & & 4\\
\hline \\  [-1ex]
$(-1)^{4-k}{{\omega}\brack {k}}_q$ & $-(q^{-4}+q^{-3})$ & &  $3q^{-4}+2q^{-3}$ & & $-(3q^{-4}+q^{-3})$ & & 1 \\  [2ex]
\hline
\end{tabular}
\label{tab:q-xD-Stirling-1st}
\end{table}

\begin{figure}[ht]
\begin{center}
\psfrag{0}[][][0.85]{$q^{0}$}
\psfrag{-3}[][][0.85]{$q^{-3}$}
\psfrag{-4}[][][0.85]{$q^{-4}$}
\includegraphics[width=4.4in]{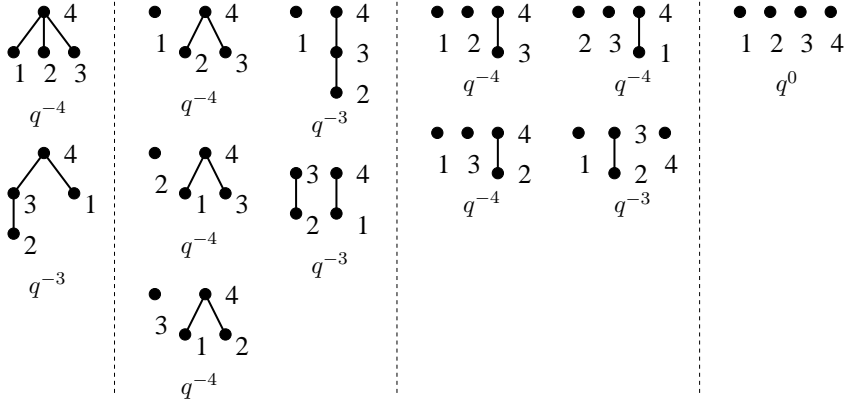}
\end{center}
\caption{\small The members in $\F(\omega,k)$ and their contributions to $f_q(\omega,k)$ for $\omega=xxDxxDDD$.}
\label{fig:weight-k-comp-forests}
\end{figure}

\subsection{Two $q$-Stirling numbers of the 2nd kind} We consider the specific word $\omega=(xD)^n$ in the $q$-deformed Weyl algebra expanding over the sequences  $\{x^{k}D^{k}\}_{k\ge 0}$ and $\{xD^{k}x^{k-1}\}_{k\ge 0}$. We define two $q$-Stirling numbers of the second kind, denoted by ${{n}\brace {k}}_q$ and $\overline{{{n}\brace {k}}}_q$, as the coefficients of the following expansions
\begin{align}
(xD)^{n} &=\sum_{k=0}^n {{n}\brace {k}}_q x^{k}D^{k} \label{eq:q-Stir-2nd-kind}\\
(xD)^{n} &=\sum_{k=0}^n (-1)^{n-k} \overline{{{n}\brace {k}}}_q xD^{k}x^{k-1}. \label{eq:q-Lah-number}
\end{align}
We remark that the former $q$-Stirling number of the second kind ${{n}\brace {k}}_q$ coincides with Carlitz's $q$-Stirling number \cite{Carlitz}, which satisfies the recurrence
\[
{{n}\brace {k}}_q = q^{k-1}{{n-1}\brace {k-1}}_q + [k]_q{{n-1}\brace {k}}_q,
\]
where $[n]_q:=1+q+\cdots+q^{n-1}$ and $[0]_q=1$, with the initial conditions ${n\brace 0}_q=\delta_{n,0}$ for $n\ge 0$ and ${n\brace k}_q=0$ for $0\le n<k$.
Engbers et al. \cite{EGH} gave a combinatorial interpretation of ${{n}\brace {k}}_q$, which is quite involved. As a new generalization, we shall give a combinatorial interpretation for the latter $q$-Stirling number of the second kind $\overline{{{n}\brace {k}}}_q$ (Theorem \ref{thm:meaning-q-Stirling-2nd}).

Making use of the relation $xD=q^{-1}(Dx-1)$, it is straightforward to derive the following identities by the same argument as in the proof of Lemma \ref{lem:derivative}.

\begin{lem} \label{lem:q-xD^n-reduction} For all $n\ge 0$, we have
\begin{enumerate}
\item $xD^{n}=q^{-n}(D^{n}x-[n]_q D^{n-1})$,
\item $x^{n}D=q^{-n}(Dx^{n}-[n]_q x^{n-1})$.
\end{enumerate}
\end{lem}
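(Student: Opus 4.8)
The plan is to prove both identities in Lemma \ref{lem:q-xD^n-reduction} by induction on $n$, exactly mirroring the structure of the proof of Lemma \ref{lem:derivative} but with the $q$-deformed commutation relation $Dx=qxD+1$, equivalently $xD=q^{-1}(Dx-1)$, driving the reduction. Since the two parts are dual to each other (obtained by interchanging the roles of $x$ and $D$), I expect to give the argument in full for part (i) and then indicate that part (ii) follows by the identical bookkeeping with $x$ and $D$ swapped.

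For part (i), the base case $n=0$ reads $x = q^{0}(x - [0]_q D^{-1})$; since $[0]_q=1$ and the $D^{-1}$ term is taken to be absent (the statement is really meaningful for $n\ge 1$, with $n=0$ being the trivial identity $x=x$), I would instead anchor the induction at $n=1$, where the claim $xD = q^{-1}(Dx - [1]_q) = q^{-1}(Dx-1)$ is precisely the defining relation of the $q$-deformed Weyl algebra. For the inductive step, assuming $xD^{n-1}=q^{-(n-1)}(D^{n-1}x-[n-1]_q D^{n-2})$, I would write
\begin{align*}
xD^{n} = (xD^{n-1})D &= q^{-(n-1)}\bigl(D^{n-1}x-[n-1]_q D^{n-2}\bigr)D\\
&= q^{-(n-1)}\bigl(D^{n-1}(xD)-[n-1]_q D^{n-1}\bigr)\\
&= q^{-(n-1)}\bigl(D^{n-1}q^{-1}(Dx-1)-[n-1]_q D^{n-1}\bigr),
\end{align*}
and then collect the two terms proportional to $D^{n-1}$. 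The key $q$-arithmetic fact I would invoke is $q^{-1}+[n-1]_q = [n]_q$ after factoring out $q^{-n}$, which is the standard recurrence $[n]_q = 1 + q[n-1]_q$ rewritten appropriately; verifying this coefficient collapse to $-q^{-n}[n]_q D^{n-1}$ is the one place where care is needed, since a sign or a stray power of $q$ is easy to misplace.

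For part (ii), I would observe that applying the algebra's anti-automorphism (or simply rerunning the induction with $x^{n}D = (x^{n-1}D)\cdot$-style decomposition, peeling off one $x$ on the left via $x\cdot x^{n-1}D$) produces the symmetric statement, with $[n]_q$ again generated by the same recurrence $[n]_q = 1 + q[n-1]_q$. The anchor is $x^{1}D = xD = q^{-1}(Dx - [1]_q x^{0})$, which is once more the defining relation, and the inductive coefficient identity is identical to that in part (i).

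The main obstacle, as flagged above, will be keeping the powers of $q$ and the $q$-integer coefficients consistent through the reduction: because $xD=q^{-1}(Dx-1)$ carries its own factor of $q^{-1}$, each application shifts the overall power, and the telescoping that produces $[n]_q$ relies on matching $q^{-1}\cdot 1 + [n-1]_q$ against $q^{-n}[n]_q$ after the global $q^{-(n-1)}$ prefactor is accounted for. Once that single coefficient computation is pinned down the rest is routine, so I would present the $n=1$ base case, one clean inductive display for (i) with the $[n]_q=1+q[n-1]_q$ step made explicit, and a one-sentence appeal to symmetry for (ii).
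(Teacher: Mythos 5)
Your proof is correct and is essentially the paper's own argument: the paper derives this lemma by exactly the induction you describe, mirroring the proof of Lemma \ref{lem:derivative} with the $q$-deformed relation $xD=q^{-1}(Dx-1)$, peeling off one $D$ (resp.\ one $x$) per step and collapsing coefficients via $[n]_q=1+q[n-1]_q$. The only nitpick is that your phrase ``$q^{-1}+[n-1]_q=[n]_q$ after factoring out $q^{-n}$'' should literally read $q\bigl(q^{-1}+[n-1]_q\bigr)=[n]_q$, which you effectively correct by citing the recurrence $[n]_q=1+q[n-1]_q$.
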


With the initial conditions $\overline{{n\brace 0}_q}=\delta_{n,0}$ for $n\ge 0$ and $\overline{{n\brace k}_q}=0$ for $0\le n<k$, the polynomial $\overline{{{n}\brace {k}}}_q$ satisfies
the following recurrence relation.

\begin{lem} \label{lem:q-Stirling-2nd-recurrence} For $1\le k\le n$, we have
\[
\overline{{{n}\brace {k}}}_q =\frac{1}{q^{k-1}}\overline{{{n-1}\brace {k-1}}}_q+\frac{[k]_q}{q^k}\overline{{{n-1}\brace {k}}}_q.
\]
\end{lem}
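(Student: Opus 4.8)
The plan is to exploit the factorization $(xD)^n = (xD)\,(xD)^{n-1}$ and push the extra factor $xD$ through the defining expansion of $(xD)^{n-1}$. Substituting Eq.\,(\ref{eq:q-Lah-number}) for $(xD)^{n-1}$ gives
\[
(xD)^n = \sum_{k=0}^{n-1} (-1)^{n-1-k}\,\overline{{{n-1}\brace {k}}}_q \; (xD)\,\bigl(xD^{k}x^{k-1}\bigr),
\]
so the whole problem reduces to re-expanding the single product $(xD)\,xD^{k}x^{k-1}$ over the basis $\{xD^{j}x^{j-1}\}_{j\ge 0}$ and then matching coefficients.

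First I would normalize the middle of $(xD)\,xD^{k}x^{k-1}=x(Dx)D^{k}x^{k-1}$ using the commutation relation in the form $Dx=qxD+1$, which yields $q\,x^{2}D^{k+1}x^{k-1}+xD^{k}x^{k-1}$. The second summand is already a basis element (index $j=k$), so the only real work is to bring $x^{2}D^{k+1}x^{k-1}$ into the basis. For this I would apply Lemma \ref{lem:q-xD^n-reduction}(i) to the inner block $xD^{k+1}$, obtaining
\[
x^{2}D^{k+1}x^{k-1}=q^{-(k+1)}\,xD^{k+1}x^{k}-q^{-(k+1)}[k+1]_q\,xD^{k}x^{k-1}.
\]
Crucially the first term lands exactly on the basis element $xD^{k+1}x^{(k+1)-1}$ (index $j=k+1$), while the second is again the $j=k$ element.

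Combining the two contributions and using the elementary identity $[k+1]_q=[k]_q+q^{k}$ to cancel the stray $+\,xD^{k}x^{k-1}$ against part of the reduction, the product collapses to the clean two-term form
\[
(xD)\,xD^{k}x^{k-1}=q^{-k}\,xD^{k+1}x^{k}-q^{-k}[k]_q\,xD^{k}x^{k-1}.
\]
Substituting this back and extracting the coefficient of $xD^{k}x^{k-1}$, i.e.\ of $(-1)^{n-k}\overline{{{n}\brace {k}}}_q$, then yields the recurrence: the first term contributes through the index shift $k\mapsto k-1$ with factor $q^{-(k-1)}$, and the second contributes the factor $q^{-k}[k]_q$, while the signs $(-1)^{n-1-k}$ recombine correctly into $(-1)^{n-k}$.

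The computation is entirely routine; the only places demanding care are the coordinated bookkeeping of the two index shifts when extracting coefficients and the $q$-arithmetic step $[k+1]_q=[k]_q+q^{k}$, which is exactly what produces the factor $[k]_q/q^{k}$ rather than $[k+1]_q/q^{k+1}$. I would also check the endpoints $k=n$ and $k=1$ against the stated initial conditions $\overline{{n\brace 0}_q}=\delta_{n,0}$ and $\overline{{n\brace k}_q}=0$ for $n<k$ to confirm consistency of the recurrence at the boundary.
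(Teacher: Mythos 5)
Your proposal is correct and takes essentially the same route as the paper: both factor $(xD)^n=(xD)(xD)^{n-1}$, expand $(xD)^{n-1}$ by its defining expansion, reduce the product $(xD)\,xD^{k}x^{k-1}$ to the two-term form $q^{-k}\,xD^{k+1}x^{k}-q^{-k}[k]_q\,xD^{k}x^{k-1}$ using Lemma \ref{lem:q-xD^n-reduction}(i), and then extract coefficients. The only difference is cosmetic: the paper applies Lemma \ref{lem:q-xD^n-reduction}(i) directly to the inner $xD^{k}$ and gets that identity in one step, whereas you first commute $Dx=qxD+1$, apply the lemma to $xD^{k+1}$, and cancel via $[k+1]_q=[k]_q+q^{k}$.
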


\begin{proof} Making use of the relations in Lemma \ref{lem:q-xD^n-reduction}, we observe that
\begin{align*}
(xD)^n &= (xD)(xD)^{n-1} \\
       &= \sum_{k=0}^{n-1} (-1)^{n-1-k} \overline{{{n-1}\brace {k}}}_q (xD)(xD^{k}x^{k-1}) \\
       &= \sum_{k=0}^{n-1} (-1)^{n-1-k} \overline{{{n-1}\brace {k}}}_q {q^{-k}} (xD)(D^{k}x- [k]_qD^{k-1})x^{k-1}\\
       &= \sum_{k=0}^{n-1} (-1)^{n-1-k} \overline{{{n-1}\brace {k}}}_q {q^{-k}} (xD^{k+1}x^{k}- [k]_q xD^{k}x^{k-1})\\
\end{align*}
Extracting the coefficient of $xD^{k}x^{k-1}$ on both sides, the assertion follows.
\end{proof}

Now, we present a realization of $\overline{{{n}\brace {k}}}_q$.
Let $\P(n,k)$ be the collection of partitions of $[n]$ into $k$ non-empty subsets, called \emph{blocks}. For a partition $\pi\in\P(n,k)$, we arrange the blocks of $\pi$ in a sequence $B_1,B_2,\dots,B_k$ in increasing order of their least elements.  We define the weight $\wt(\pi)$ of the partition $\pi$ by
\[
\wt(\pi):=\sum_{j=1}^k \big(j\cdot |B_j|-1\big).
\]
For example, if $\pi=127|3|489|56\in\P(9,4)$ then $\wt(\pi)=18$.
Let $p_q(n,k)$ denote the (negative) weight polynomial for $\P(n,k)$ defined as
\[
p_q(n,k)=\sum_{\pi\in\P(n,k)} q^{-\wt(\pi)}.
\]

\begin{thm} \label{thm:meaning-q-Stirling-2nd} For $1\le k\le n$, we have
\[
p_q(n,k)=\overline{{{n}\brace {k}}}_q.
\]
\end{thm}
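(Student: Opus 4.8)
The plan is to prove the identity by induction on $n$, showing that the weight polynomial $p_q(n,k)$ obeys exactly the recurrence for $\overline{{{n}\brace {k}}}_q$ recorded in Lemma \ref{lem:q-Stirling-2nd-recurrence}, namely
\[
p_q(n,k)=\frac{1}{q^{k-1}}\,p_q(n-1,k-1)+\frac{[k]_q}{q^k}\,p_q(n-1,k).
\]
The base cases are immediate: the conventions $|\P(n,0)|$ and the vanishing for $0\le n<k$ match the stated initial conditions $\overline{{n\brace 0}_q}=\delta_{n,0}$ and $\overline{{n\brace k}_q}=0$ for $0\le n<k$. The substance of the argument is to classify each partition $\pi\in\P(n,k)$ according to the fate of the largest element $n$, and to track precisely how its weight $\wt(\pi)=\sum_{j=1}^k\big(j\cdot|B_j|-1\big)$ differs from that of the induced partition of $[n-1]$.

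First I would treat the case where $\{n\}$ is a singleton block of $\pi$. Since $n$ is the maximum element of $[n]$, the block $\{n\}$ has the largest least element among all blocks, so in the increasing ordering of blocks by least element it must be $B_k$. Deleting it yields a partition $\pi'\in\P(n-1,k-1)$ whose blocks keep their positions and hence their weight contributions, while the contribution of $B_k=\{n\}$ is $k\cdot 1-1=k-1$. Thus $\wt(\pi)=\wt(\pi')+(k-1)$, and summing $q^{-\wt(\pi)}$ over all such $\pi$ produces the factor $q^{-(k-1)}=1/q^{k-1}$ multiplying $p_q(n-1,k-1)$, which is the first term of the recurrence.

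Next I would treat the case where $n$ lies in a block of size at least two. Removing $n$ from its block gives a partition $\pi'\in\P(n-1,k)$, and conversely $n$ may be reinserted into any one of the $k$ blocks of $\pi'$. The key observation is that, because $n$ exceeds every other element, inserting it into a block does not change that block's least element; consequently the ordering $B_1,\dots,B_k$ by least elements is preserved, and inserting $n$ into the block occupying position $j$ increases only the $j$th summand of the weight, from $j|B_j|-1$ to $j(|B_j|+1)-1$, an increase of exactly $j$. Hence $\wt(\pi)=\wt(\pi')+j$, and summing over the $k$ choices of $j$ gives the factor $\sum_{j=1}^k q^{-j}=q^{-1}+\cdots+q^{-k}=[k]_q/q^{k}$ multiplying $p_q(n-1,k)$, which is the second term. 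Adding the two cases yields the claimed recurrence, and the theorem follows by induction. I do not expect any serious obstacle here; the only point demanding care is the weight bookkeeping, specifically confirming that insertion of the maximal element $n$ never disturbs the canonical block ordering, so that the single index $j$ cleanly records both the block chosen and the weight increment.
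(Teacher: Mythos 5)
Your proof is correct and follows essentially the same route as the paper: both establish the recurrence of Lemma \ref{lem:q-Stirling-2nd-recurrence} for $p_q(n,k)$ by classifying partitions according to whether $n$ forms the singleton block $B_k$ (weight contribution $k-1$) or sits in the $j$th block alongside elements of $[n-1]$ (weight contribution $j$, summing to $[k]_q/q^k$). Your write-up is simply a more detailed account of the paper's argument, with the block-ordering preservation made explicit.
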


\begin{proof} We shall prove that the polynomial $p_q(n,k)$ satisfies the following relation
\begin{equation} \label{eq:p-q(n,k)-recurrence}
p_q(n,k)=\frac{1}{q^{k-1}} p_q(n-1,k-1)+\frac{[k]_q}{q^k}p_q(n-1,k).
\end{equation}
On the right-hand side of Eq.\,(\ref{eq:p-q(n,k)-recurrence}), we observe that the first term  is the distribution of all members $\pi\in\P(n,k)$ in which the $k$th block consists of the element $n$, contributing a weight of $k-1$ to $\pi$. The second term is the distribution of the members $\pi\in\P(n,k)$ in which the element $n$ occurs in a block with at least one element in $[n-1]$. Note that the element $n$ contributes a weight of $j$ to $\pi$ if $n$ is in the $j$th block for some $j$ ($1\le j\le k$). This proves the recurrence relation Eq.\,(\ref{eq:p-q(n,k)-recurrence}).

By Lemma \ref{lem:q-Stirling-2nd-recurrence}, the polynomials $p_q(n,k)$ and $\overline{{{n}\brace {k}}}_q$ share the same recurrence relation. The assertion follows.
\end{proof}

\begin{exa} {\rm
The coefficients of the expansion $(xD)^4=\sum_{k=1}^{4} (-1)^{4-k} \overline{{{4}\brace {k}}}_q xD^{k}x^{k-1}$  are listed in Table \ref{tab:q-xD-Stirling-2nd}.
The members in $\P(4,2)$, along with their weights are shown in Table \ref{tab:2-partitions-of-4}.
}
\end{exa}

\begin{table}[ht]
\caption{The $q$-Stirling numbers $\overline{{{4}\brace {k}}}_q$ for $1\le k\le 4$.}
\centering
\begin{tabular} {c|ccccccc}
\hline
$k$ &  1 & & 2 & & 3 & & 4\\
\hline \\  [-1ex]
$(-1)^{4-k}\overline{{{4}\brace {k}}}_q$ & $-q^{-3}$ & &  $q^{-5}+3q^{-4}+3q^{-3}$ & & $-(q^{-6}+2q^{-5}+3q^{-4})$ & & $q^{-6}$ \\  [2ex]
\hline
\end{tabular}
\label{tab:q-xD-Stirling-2nd}
\end{table}

\begin{table}[ht]
\caption{The members in $\P(4,2)$ and their weights.}
\centering
\begin{tabular} {c|ccccccccccccc}
\hline
$\pi$ &  $1|234$ & & $134|2$ & & $124|3$ & & $123|4$ & & $12|34$ & & $13|24$ & & $14|23$ \\
\hline \\  [-1ex]
$\wt(\pi)$ & 5 & &  3  & &  3  & &  3  & &  4  & &  4  & & 4 \\
\hline
\end{tabular}
\label{tab:2-partitions-of-4}
\end{table}

\subsection{$q$-Lah number}
We shall present a new $q$-Lah number, ${{n}\bangle {k}}_q$, by the expansion in Eq.\,(\ref{eq:q-word-Lah-number}) of the word $\omega=x^{n}D^n$ in the $q$-deformed Weyl algebra, i.e.,
\[
x^{n}D^{n}=\sum_{k=0}^{n} (-1)^{n-k} {{n}\bangle {k}}_q xD^{k}x^{k-1}.
\]

Making use of the relations in Lemma \ref{lem:q-xD^n-reduction}, it is straightforward to derive the following recurrence in the same manner as the proof of Lemma \ref{lem:XD-Lah-number-recurrence}(ii), with the initial conditions ${n\bangle 0}_q=\delta_{n,0}$ for $n\ge 0$ and ${n\bangle k}_q=0$ for $0\le n<k$.

\begin{lem}
For $1\le k\le n$, we have
\[
{{n}\bangle {k}}_q=\frac{1}{q^{2k-2}} {{n-1}\bangle {k-1}}_q+\frac{(1+q)[k]_q}{q^{2k}}{{n-1}\bangle {k}}_q+\frac{[k]_q[k+1]_q} {q^{2k+1}}{{n-1}\bangle {k+1}}_q.
\]
\end{lem}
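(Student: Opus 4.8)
The plan is to derive the three-term recurrence for ${{n}\bangle{k}}_q$ directly from the defining expansion in Eq.\,(\ref{eq:q-word-Lah-number}) applied to $\omega=x^nD^n$, mimicking the computation carried out in the proof of Lemma \ref{lem:XD-Lah-number-recurrence}(ii) but now keeping careful track of the powers of $q$ supplied by Lemma \ref{lem:q-xD^n-reduction}. First I would write $x^nD^n=x(x^{n-1}D^{n-1})D$ and substitute the expansion $x^{n-1}D^{n-1}=\sum_{\ell}(-1)^{n-1-\ell}{{n-1}\bangle{\ell}}_q\,xD^{\ell}x^{\ell-1}$, so that the task reduces to understanding the single term $x(xD^{\ell}x^{\ell-1})D$ and re-expressing it in the basis $\{xD^{k}x^{k-1}\}$.

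The heart of the calculation is to move the flanking operators inward. I would use Lemma \ref{lem:q-xD^n-reduction}(i) to rewrite $xD^{\ell}=q^{-\ell}(D^{\ell}x-[\ell]_q D^{\ell-1})$ and Lemma \ref{lem:q-xD^n-reduction}(ii) (applied to $x^{\ell-1}D$) to rewrite $x^{\ell-1}D=q^{-(\ell-1)}(Dx^{\ell-1}-[\ell-1]_q x^{\ell-2})$, exactly paralleling the classical reduction in Lemma \ref{lem:XD-Lah-number-recurrence}(ii) where the two factors $D^{\ell}x-\ell D^{\ell-1}$ and $Dx^{\ell-1}-(\ell-1)x^{\ell-2}$ appear. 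Expanding the product $x(D^{\ell}x-[\ell]_q D^{\ell-1})(Dx^{\ell-1}-[\ell-1]_q x^{\ell-2})$ and collecting terms yields a combination of $xD^{\ell+1}x^{\ell}$, $xD^{\ell}x^{\ell-1}$, and $xD^{\ell-1}x^{\ell-2}$, carrying an overall factor $q^{-\ell-(\ell-1)}=q^{-(2\ell-1)}$. After assembling these contributions from all $\ell$ and extracting the coefficient of $xD^{k}x^{k-1}$, the three source indices $\ell=k-1,k,k+1$ feed into the coefficient of $xD^{k}x^{k-1}$, producing the claimed three terms.

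The main obstacle, and the step I would check most carefully, is the bookkeeping of the $q$-powers and the $q$-integer factors $[\cdot]_q$ in the middle coefficient. In the classical case the coefficient of $xD^{\ell}x^{\ell-1}$ coming from $x(D^{\ell}x-\ell D^{\ell-1})(Dx^{\ell-1}-(\ell-1)x^{\ell-2})D$ picked up a $-2\ell$ from the two cross terms $D^{\ell}x\cdot(-(\ell-1)x^{\ell-2})$-type products; in the $q$-setting the two cross terms are weighted by different powers of $q$ and by $[\ell]_q$ versus $[\ell-1]_q$, so they do not simply add to $-2\ell$ but rather combine into a factor proportional to $(1+q)[k]_q$ after re-indexing $\ell\mapsto k$. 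I would verify that the diagonal contribution (from $\ell=k$, giving the two surviving middle terms) together with the appropriate power $q^{-(2k-1)}$ reorganizes, after factoring, into $\dfrac{(1+q)[k]_q}{q^{2k}}$, and that the off-diagonal contributions from $\ell=k-1$ and $\ell=k+1$ give $\dfrac{1}{q^{2k-2}}$ and $\dfrac{[k]_q[k+1]_q}{q^{2k+1}}$ respectively; a useful consistency check is the $q\to 1$ limit, under which the recurrence must collapse to the classical Lah recurrence implicit in Lemma \ref{lem:XD-Lah-number-recurrence}(ii), namely ${{n}\bangle{k}}={{n-1}\bangle{k-1}}+2k\,{{n-1}\bangle{k}}+k(k+1)\,{{n-1}\bangle{k+1}}$. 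Since the computation is a direct operator manipulation with no inductive subtlety beyond Lemma \ref{lem:q-xD^n-reduction}, once the $q$-arithmetic is confirmed the recurrence follows immediately, which is why the statement is presented with the remark that it is ``straightforward'' in the same manner as Lemma \ref{lem:XD-Lah-number-recurrence}(ii).
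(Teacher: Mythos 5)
Your proposal is correct and follows exactly the route the paper intends: the paper proves this lemma only by remarking that it follows from Lemma \ref{lem:q-xD^n-reduction} ``in the same manner as the proof of Lemma \ref{lem:XD-Lah-number-recurrence}(ii),'' which is precisely the computation you carry out, and your $q$-bookkeeping is right --- the middle coefficient arises as $q^{-(2k-1)}\bigl(q^{-1}+[k-1]_q+[k]_q\bigr)=q^{-2k}\bigl(1+q[k-1]_q+q[k]_q\bigr)=q^{-2k}(1+q)[k]_q$.
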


In the following, we present a realization of ${{n}\bangle {k}}_q$. Note that the graph associated with the word $\omega=x^{n}D^{n}$ is the complete graph on vertices $[n]$, i.e., $G_{\omega}=K_n$. Let $\H(n,k)$ be the set of partitions of $G_{\omega}$ into a disjoint union of $k$ decreasing forests. We write a forest in the \emph{canonical form} such that its components are arranged in increasing order of their roots.
For a member $\alpha\in\H(n,k)$, we arrange the forests of $\alpha$ in increasing order of their first roots.
For $1\le m\le n$, let $K_m$ denote the complete subgraph of $G_{\omega}$ on the vertices $\{1,2,\dots,m\}$.
For any member $\alpha\in\H(n,k)$, let $F_1,F_2,\dots,F_d$ be the $d$-tuple of forests of $\alpha\cap K_m$ for some integer $d$. Note that the vertex $m$ is the greatest vertex in $\alpha\cap K_m$. Let $T(m)$ denote the component of $\alpha\cap K_m$ rooted at $m$ and let $T^{*}(m)$ denote the forest obtained from $T(m)$ by removing the root $m$. Suppose $T(m)$ is in the forest $F_j$ ($1\le j\le d$). We define two numbers $r_m(\alpha)$ and $s_m(\alpha)$ according to the following cases.
\begin{enumerate}
\item $F_j$ has only one component. Then $j=d$. We assign $r_m(\alpha)=d-1$. Moreover, if $T(m)$ is a single vertex then we assign $s_m(\alpha)=d-1$ otherwise $T^{*}(m)$ is a forest, say the $\ell$th forest, in the graph $\alpha\cap K_{m-1}$ for some $\ell$ ($1\le \ell\le d$) and we assign $s_m(\alpha)=\ell$.
\item $F_j$ has more than one component. Then we assign $r_m(\alpha)=j$. Moreover, if $T(m)$ is a single vertex then we assign $s_m(\alpha)=d$ otherwise $T^{*}(m)$ is a forest, say the $\ell$th forest, in the graph $\alpha\cap K_{m-1}$ for some $\ell$ ($1\le \ell\le d+1$) and we assign $s_m(\alpha)=\ell$.
\end{enumerate}
The weight $\wt(\alpha)$ of $\alpha$ is defined by
\[
\wt(\alpha):=\sum_{m=1}^{n} r_m(\alpha)+s_m(\alpha).
\]
Let $h_q(n,k)$ denote the (negative) weight polynomial for $\H(n,k)$ defined as
\[
h_q(n,k)=\sum_{\alpha\in\H(n,k)} q^{-\wt(\alpha)}.
\]

\begin{thm} \label{thm:weighted-q-Lah-number} For $1\le k\le n$, we have
\[
h_q(n,k)={{n}\bangle {k}}_q.
\]
\end{thm}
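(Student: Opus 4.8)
The plan is to prove $h_q(n,k)={{n}\bangle{k}}_q$ by showing that the weight polynomial $h_q(n,k)$ satisfies the same recurrence as ${{n}\bangle{k}}_q$ in the preceding Lemma, namely
\[
h_q(n,k)=\frac{1}{q^{2k-2}}h_q(n-1,k-1)+\frac{(1+q)[k]_q}{q^{2k}}h_q(n-1,k)+\frac{[k]_q[k+1]_q}{q^{2k+1}}h_q(n-1,k+1),
\]
together with matching initial conditions. Since the underlying sets $\H(n,k)$ are already known (by Theorem \ref{thm:meaning-Lah-Dyck} specialized to $\omega=x^nD^n$) to have cardinality ${{n}\bangle{k}}$, the enumerative skeleton is in place; what is new is tracking the $q$-weight. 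The strategy is therefore to classify each member $\alpha\in\H(n,k)$ by how the greatest vertex $n$ sits in the forest $\alpha\cap K_n$, recover a member of $\H(n-1,\cdot)$ by deleting $n$, and compute exactly how the removal of $n$ changes the weight.

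First I would set up the deletion map. Removing the vertex $n$ from a canonical-form member $\alpha\in\H(n,k)$ yields a decreasing forest decomposition of $K_{n-1}$ that lies in $\H(n-1,\ell)$ for $\ell\in\{k-1,k,k+1\}$, paralleling the three cases of Lemma \ref{lem:XD-Lah-number-recurrence}(ii). The three terms of the target recurrence correspond to these three values of $\ell$: $\ell=k-1$ when $n$ forms a brand-new singleton forest; $\ell=k$ when $n$ either is attached as an isolated tree to an existing forest or else becomes the root absorbing the components of some forest that remains a single tree; and $\ell=k+1$ when $n$ roots a tree built from one forest's components and that tree is then merged into another forest. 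The key step is to verify that for the only quantities the weight of $\alpha$ depends on at vertex $n$, namely $r_n(\alpha)$ and $s_n(\alpha)$, summed over all configurations of $\alpha$ reducing to a fixed $\beta\in\H(n-1,\ell)$, the generated factor of $q^{-(r_n+s_n)}$ reproduces exactly the prefactor attached to $h_q(n-1,\ell)$ in the recurrence. Crucially, the definitions of $r_m,s_m$ for $m<n$ are insensitive to the presence of $n$, so $\wt(\alpha)=\wt(\beta)+r_n(\alpha)+s_n(\alpha)$, which reduces the whole problem to a local computation at $n$.

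The bulk of the work, and the main obstacle, is the bookkeeping in the $\ell=k$ case, where the two distinct sub-cases (isolated-singleton attachment versus single-tree absorption) must jointly produce the factor $(1+q)[k]_q/q^{2k}$. Here I would fix $\beta\in\H(n-1,k)$ and sum $q^{-(r_n+s_n)}$ over all ways of inserting $n$: attaching $n$ as a singleton tree into the $j$th forest contributes according to the case-(1)/(2) split of the definition and ranges over $j=1,\dots,k$, while making $n$ the root over a one-component forest contributes a separate family. The claim is that these two families together telescope into $(1+q)[k]_q=(1+q)(1+q+\cdots+q^{k-1})$ after dividing by $q^{2k}$; I expect the $(1+q)$ factor to arise precisely from the two structurally different but weight-compatible insertions, and the $[k]_q$ from summing the positional index $j$ over the $k$ forests. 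The $\ell=k-1$ term is the cleanest (a unique singleton insertion giving $r_n=s_n=k-1$, hence $q^{-2(k-1)}=q^{-(2k-2)}$), and the $\ell=k+1$ term, while more elaborate, follows the same positional-sum philosophy to yield $[k]_q[k+1]_q/q^{2k+1}$. Once the three local sums are checked against the stated prefactors, the recurrences coincide and the theorem follows by induction on $n$.
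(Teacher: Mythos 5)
Your proposal is correct and follows essentially the same route as the paper's own proof: delete the largest vertex $n$ from $\alpha\in\H(n,k)$, classify the resulting member of $\H(n-1,\ell)$ by $\ell\in\{k-1,k,k+1\}$ (with the $\ell=k$ case splitting into the two subcases you describe), and verify that the local weight contributions $q^{-(r_n+s_n)}$ sum to the prefactors $1/q^{2k-2}$, $(1+q)[k]_q/q^{2k}$, and $[k]_q[k+1]_q/q^{2k+1}$, so that $h_q(n,k)$ and ${{n}\bangle{k}}_q$ satisfy the same recurrence. Your explicit observation that $\wt(\alpha)=\wt(\beta)+r_n(\alpha)+s_n(\alpha)$ because $r_m,s_m$ for $m<n$ depend only on $\alpha\cap K_m$ is left implicit in the paper, and your anticipated factorizations (the $(1+q)$ from the two weight-offset-by-one families, $[k]_q$ and $[k+1]_q$ from positional sums) are exactly how the computation works out.
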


\begin{proof} We claim that the polynomial $h_q(n,k)$ satisfies the following relation
\begin{equation*} 
h_q(n,k)=\frac{1}{q^{2k-2}}h_q(n-1,k-1)+\frac{(1+q)[k]_q}{q^{2k}}h_q(n-1,k)+\frac{[k]_q[k+1]_q} {q^{2k+1}} h_q(n-1,k+1).
\end{equation*}
As shown in the proof of Theorem \ref{thm:meaning-Lah-Dyck},
for any member $\alpha\in\H(n,k)$, removing the vertex $n$ from $\alpha$ leads to $\alpha\cap K_{n-1}\in\H(n-1,\ell)$ for some $\ell$ ($k-1\le \ell\le k+1$).  Then $\alpha$ is in one of the following forms.
\begin{itemize}
\item $\ell=k-1$. Then tree $T(n)$, containing a single vertex, forms the last forest of $\alpha$. Then the vertex $n$ contributes the weight of $2k-2$ to $\alpha$.
\item $\ell=k$. The tree $T(n)$ forms the last forest of $\alpha$ and the forest $T^{*}(n)$ is one of the $k$ forests of $\alpha\cap K_{n-1}$, say the $j$th forest. Then the vertex $n$ contributes the weight of $k-1+j$ to $\alpha$.
\item $\ell=k$. The tree $T(n)$, containing a single vertex, is in one of the $k$ forests of $\alpha$, say the $j$th forest. Then the vertex $n$ contributes the weight of $k+j$ to $\alpha$.
\item $\ell=k+1$. The tree $T(n)$ is in one of the $k$ forests of $\alpha$, say the $j$th forest. The forest $T^{*}(n)$ is one of the $k+1$ forests of $\alpha\cap K_{n-1}$, say the $i$th forest. Then the vertex $n$ contributes the weight of $j+i$ to $\alpha$.
\end{itemize}
Hence the polynomial $h_q(n,k)$ satisfies the relation mentioned above.
The assertion follows from the fact that the polynomials $h_q(n,k)$ and ${{n}\bangle {k}}_q$ share the same recurrence relation.

\end{proof}

\begin{exa} {\rm
The coefficients of the expansion $x^4D^4=\sum_{k=1}^{4} (-1)^{4-k}{{4}\bangle {k}}_q xD^{k}x^{k-1}$ are listed in Table \ref{tab:q-Lah-4-k}, where ${{4}\bangle {3}}_q=q^{-7}+2q^{-8}+3q^{-9}+3q^{-10}+2q^{-11}+q^{-12}$. The 12 ways to partition $K_4$ into a disjoin union of 3 forests, along with their contributions to the $q$-polynomial $h_q(4,3)$, are shown in Figure \ref{fig:K4-2-forests}.
}
\end{exa}

\begin{table}[ht]
\caption{The $q$-Lah numbers ${{4}\bangle {k}}_q$ for $1\le k\le 4$.}
\centering
\begin{tabular} {c|ccccccc}
\hline
$k$ &  1 & & 2 & & 3 & & 4\\
\hline \\  [-1ex]
$(-1)^{4-k}{{4}\bangle {k}}_q$ & $-\dfrac{[2]_q[3]_q[4]_q}{q^{9}}$ & &  $\dfrac{[3]_q[3]_q[4]_q}{q^{11}}$  & & $-\dfrac{[3]_q[4]_q}{q^{12}}$
& & $\dfrac{1}{q^{12}}$ \\  [2ex]
\hline
\end{tabular}
\label{tab:q-Lah-4-k}
\end{table}

\begin{figure}[ht]
\begin{center}
\psfrag{7}[][][0.85]{$q^{-7}$}
\psfrag{8}[][][0.85]{$q^{-8}$}
\psfrag{9}[][][0.85]{$q^{-9}$}
\psfrag{10}[][][0.85]{$q^{-10}$}
\psfrag{11}[][][0.85]{$q^{-11}$}
\psfrag{12}[][][0.85]{$q^{-12}$}
\includegraphics[width=4.2in]{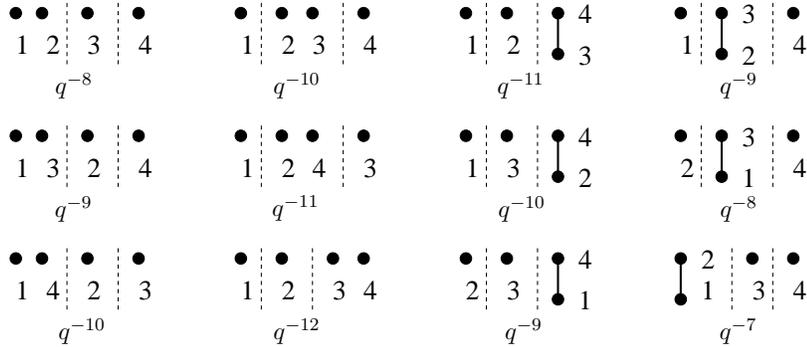}
\end{center}
\caption{\small The 12 members in $\H(4,3)$ along with their contributions to $h_q(4,3)$.}
\label{fig:K4-2-forests}
\end{figure}


\section{Ferrers boards and rook placements} In this section, we present combinatorial interpretations for the numbers ${{\omega}\brack {k}}_q$ and ${{\omega}\bangle {k}}_q$ in terms of rook placements on Ferrers boards.

For a positive integer $n$, consider the $n\times n$ square in the plane $\mathbb{Z}\times\mathbb{Z}$ with the lower-left corner $(0,0)$ and the upper-right corner $(n,n)$. A word $\omega$ in the $q$-deformed Weyl algebra $W$ with $n$ $x$'s and $n$ $D$'s forms a lattice path $\omega$ from $(0,0)$ to $(n,n)$. The region below the path $\omega$ within the $n\times n$ square is called the \emph{Ferrers board} of $\omega$, denoted by $B_{\omega}$. A board consists of an array of cells arranged in rows and columns. The rows (resp. columns) of the board $B_{\omega}$ are indexed $1,2,\dots, n$ from bottom to top (resp. from left to right) and the $(i,j)$ cell is the intersection of the $i$th row and the $j$th column. A consecutive $xD$ steps in the path $\omega$ is called a \emph{peak}. A cell (along the path $\omega$) with a peak on the upper-left corner is called a \emph{peak-cell} of $B_{\omega}$. A \emph{$k$-rook placement} of $B_{\omega}$ is a way to place $k$ non-attacking rooks on the board $B_{\omega}$ (i.e., no two rooks in the same row or column).

\subsection{$q$-Stirling number of the 1st kind for Dyck words} For any Dyck word $\omega\in\C_n$, notice that the board $B_{\omega}$ can always accommodate $n$ non-attacking rooks.
Given a $n$-rook placement of $B_{\omega}$, a rook at the $(i,j)$ cell is \emph{white} if there is no rook placed in the $(a,b)$ cells with $a<i$ and $b>j$, otherwise it is \emph{black}. Namely, there is no rook placed south-east of a white rook. Let $\R(\omega,k)$ be the collection of $n$-rook placements of $B_{\omega}$ with $k$ white rooks. For such a rook placement $\sigma\in \R(\omega,k)$, we define the statistic $\inv(\sigma)$ to be the number of cells in $B_{\omega}$ that either
do not have a rook above them on the same column or to the left of them in the same row, or have a black rook on them. For example, the rook placement shown in Figure \ref{fig:inv-rooks} is a member $\sigma\in\R(\omega,3)$ with $\inv(\sigma)=4$, where the Ferrers board $B_{\omega}$ is associated with the word $\omega=xxDxxDDD$.

\begin{figure}[ht]
\begin{center}
\includegraphics[width=1.6in]{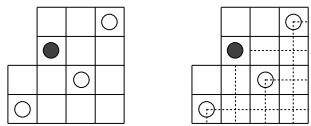}
\end{center}
\caption{\small A rook placement $\sigma\in\R(\omega,3)$ with $\inv(\sigma)=4$ for $\omega=xxDxxDDD$.}
\label{fig:inv-rooks}
\end{figure}

Let $r_q(\omega,k)$ denote the $q$-polynomial of $\R(\omega,k)$ defined as
\[
r_q(\omega,k)=\sum_{\sigma\in \R(\omega,k)} q^{-\inv(\sigma)}.
\]

\begin{thm} \label{thm:Ferrers-meaning-q-Stirling-1st} For any Dyck word $\omega\in\C_n$ in the $q$-deformed Weyl algebra $W$, we have
$r_q(\omega,k)={{\omega}\brack {k}}_q$, i.e.,
\[
\omega=\sum_{k=0}^{n} (-1)^{n-k} r_q(\omega,k) (xD)^{k}.
\]
\end{thm}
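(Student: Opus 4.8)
The plan is to establish that the weight polynomial $r_q(\omega,k)$ of the rook placements satisfies the same two recurrence relations that characterize ${{\omega}\brack {k}}_q$ in the $q$-deformed setting, namely
\[
{{\omega}\brack {k}}_q = \sum_{k_1=1}^{m} {{\mu}\brack {k_1}}_q {{\omega''}\brack {k-k_1}}_q
\quad\text{and}\quad
{{\mu}\brack {k_1}}_q=\sum_{\ell=k_1-1}^{m-1} q^{-\ell}{{\omega'}\brack {\ell}}_q {{\ell}\choose {k_1-1}},
\]
together with the matching initial conditions $r_q(\omega,0)=\delta_{n,0}$ and $r_q(\omega,k)=0$ for $0\le n<k$. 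Since these recurrences uniquely determine the numbers, matching them forces $r_q(\omega,k)={{\omega}\brack{k}}_q$. This is exactly the strategy already used to prove Theorem \ref{thm:weighted-q-xD-Stirling-1st-kind}, so the present proof runs in parallel, but now over the combinatorial model of $n$-rook placements rather than decreasing forests.

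First I would treat the product recurrence. In the standard factorization $\omega=x\omega'D\omega''$, the Ferrers board $B_{\omega}$ decomposes into the sub-board $B_{\mu}$ (sitting in the lower-left) and the sub-board $B_{\omega''}$ (sitting in the upper-right), with no cell of $B_{\omega}$ lying simultaneously below-right of both. I would argue that an $n$-rook placement $\sigma\in\R(\omega,k)$ restricts to an $m$-rook placement on $B_{\mu}$ and an $(n-m)$-rook placement on $B_{\omega''}$, that the white/black classification of each rook is determined within its own sub-board (this uses that no rook of one block can sit south-east of a rook in the other block), so the white counts add as $k=k_1+(k-k_1)$, and that the statistic splits additively, $\inv(\sigma)=\inv(\sigma|_{B_{\mu}})+\inv(\sigma|_{B_{\omega''}})$. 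These three facts give the factorization $r_q(\omega,k)=\sum_{k_1=1}^{m} r_q(\mu,k_1)\,r_q(\omega'',k-k_1)$, matching relation (i).

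Next I would handle the block recurrence, which is the delicate part. The board $B_{\mu}$ is obtained from $B_{\omega'}$ by adjoining the cells associated with the dominating vertex $m$; combinatorially, removing the rook that governs the outermost column (or the peak-cell introduced by the returning $D$) should send a placement $\sigma\in\R(\mu,k_1)$ to a placement in $\R(\omega',\ell)$ for $\ell$ in the range $k_1-1\le\ell\le m-1$, with the reinsertion accounting for a factor ${\ell\choose k_1-1}$ and an extra power $q^{-\ell}$. I would show that the newly placed rook either creates a white rook or converts $\ell-k_1+1$ previously white rooks to black, contributing precisely the weight $q^{-\ell}$ to $\inv$, so that $r_q(\mu,k_1)=\sum_{\ell=k_1-1}^{m-1} q^{-\ell}\,r_q(\omega',\ell){\ell\choose k_1-1}$.

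The main obstacle will be the bookkeeping in the block step: I must verify that the $\inv$-statistic changes by exactly $\ell$ when the dominating rook is inserted, and that the color changes (white to black) track the binomial choice of which $\ell-k_1+1$ columns receive the new south-east rook. Concretely, I expect to define an explicit insertion/deletion bijection on rook placements, then check cell-by-cell how the three categories in the definition of $\inv$ (empty cells with no rook above or to the left, versus cells carrying a black rook) are affected. Once both recurrences and the initial conditions are verified, the equality $r_q(\omega,k)={{\omega}\brack{k}}_q$ follows immediately, and the stated expansion of $\omega$ over $\{(xD)^k\}$ is just Eq.\,(\ref{eq:q-word-1st-Stirling}) with this identification.
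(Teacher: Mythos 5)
Your high-level strategy---show that $r_q(\omega,k)$ satisfies the same two recurrences and initial conditions that determine ${{\omega}\brack{k}}_q$, then conclude by uniqueness---is legitimate, and it genuinely differs from the paper's proof, which uses no recurrence at all: the paper interprets the substitution $xD\mapsto q^{-1}(Dx-1)$ directly as a weighted reduction of the board $B_{\omega}$ to the staircase board of $(xD)^k$, with the $-1$ substitutions becoming black rooks, the surviving peak cells becoming white rooks, and the substituted cells producing $q^{-\inv(\sigma)}$. But your execution has a real gap, and it sits exactly where you flagged it: the block recurrence. Before that, one repairable inaccuracy in the product step: $B_{\omega}$ is \emph{not} the disjoint union of $B_{\mu}$ and a shifted $B_{\omega''}$; it also contains the full $(n-m)\times m$ rectangle of cells in rows $1,\dots,m$ and columns $m+1,\dots,n$. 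The splitting still works, but only after two observations you omit: in a full $n$-rook placement the rooks of rows $m+1,\dots,n$ must occupy all of columns $m+1,\dots,n$ (so no rook lands in that rectangle), and every cell of that rectangle has a rook above it in its column, so it contributes nothing to $\inv$.

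The serious problem is step (ii). Deleting ``the rook that governs the outermost column'' of $\sigma\in\R(\mu,k_1)$ does not produce a member of $\R(\omega',\ell)$: the board $B_{\mu}$ is the shifted board $B_{\omega'}$ together with row $1$ and column $m$, and unless the rook in row $1$ and the rook in column $m$ are the same rook (sitting at the cell $(1,m)$), removing them leaves $m-2$ rooks that miss one row and one column of $B_{\omega'}$, hence form no full placement on it. Worse, the insertion mechanism cannot produce the binomial coefficient. A rook placed at $(1,m)$ lies south-east of every other rook, so it turns \emph{all} of them black; such placements always have exactly one white rook. More generally, a rook in row $1$ and column $b$ forcibly blackens every rook in columns $<b$, and a rook in column $m$ and row $r$ forcibly blackens every rook in rows $>r$; since the white rooks of a placement form a chain increasing in both rows and columns, the rooks that survive these forced blackenings are always a \emph{contiguous} segment of that chain, never an arbitrary $(k_1-1)$-subset of the $\ell$ white rooks, which is what ${\ell\choose k_1-1}$ demands. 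The identity (ii) for $r_q$ is of course true (both sides equal ${{\mu}\brack{k_1}}_q$ once the theorem is known), but a direct combinatorial proof of it requires composing the placement-to-reduction correspondence on $B_{\omega'}$ with the expansion of $x(xD)^{\ell}D$---that is, it requires essentially the paper's reduction argument. So your plan does not shortcut the difficulty; it concentrates it in a step whose proposed mechanism, as stated, fails.
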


\begin{proof} Consider the expansion of $\omega$ over the sequence $\{(xD)^{k}\}_{k\ge 0}$ in Eq.\,(\ref{eq:q-word-1st-Stirling}), the coefficient  ${{\omega}\brack {k}}_q$ is the number of ways to obtain the word $(xD)^k$ from $\omega$, by successively substituting $q^{-1}(Dx-1)$ for $xD$. In terms of Ferrers boards, replacing a peak $xD$ by $Dx$ (resp. by $-1$) is equivalent to deleting that peak-cell (resp. deleting that peak-cell along with its row and column),  both replacements carrying the weight of $q^{-1}$. Hence the coefficient ${{\omega}\brack {k}}_q$ is the number of weighted reductions of the board $B_{\omega}$ to $B_{\mu}$, where $\mu=(xD)^k$. Note that the board $B_{\mu}$ has a unique way to place $k$ non-attacking rooks, i.e., in the $k$ peak-cells along the path $\mu$, which amount to the white rooks of $B_{\omega}$. Moreover, the $n-k$ cells that are deleted along with their rows and columns do not have a row or column in common, which amount to the black rooks of $B_{\omega}$. The sign $(-1)^{n-k}$ is also justified.
Hence in the board $B_{\omega}$, all replacements take place at the cells that either have a black rook on them, or do not have a rook above them on the same column or to the left of them in the same row. Each of these cells is assigned the weight of $q^{-1}$. The weight of
a $n$-rook placement can be considered as the product of the weights of all cells of the board. Then the coefficient  ${{\omega}\brack {k}}_q$ is the weight distribution of the $n$-rook placements of $B_{\omega}$ with $k$ white rooks, which is exactly the $q$-polynomial $r_q(\omega,k)$ of $\R(\omega,k)$.
\end{proof}

\begin{exa} {\rm
As shown in Example \ref{exa:q-weight-k-forests}, for $\omega=xxDxxDDD$, the coefficients of the expansion $\omega=\sum_{k=1}^{4} (-1)^{4-k} r(\omega,k)_q (xD)^{k}$ are listed in Table \ref{tab:q-xD-Stirling-1st}, where
$r_q(\omega,3)=3q^{-4}+q^{-3}$. The 4 members in $\R(\omega,3)$ along with their contributions to $r_q(\omega,3)$ are shown in Figure \ref{fig:white-rooks}.
}
\end{exa}

\begin{figure}[ht]
\begin{center}
\psfrag{-3}[][][0.85]{$q^{-3}$}
\psfrag{-4}[][][0.85]{$q^{-4}$}
\includegraphics[width=3.5in]{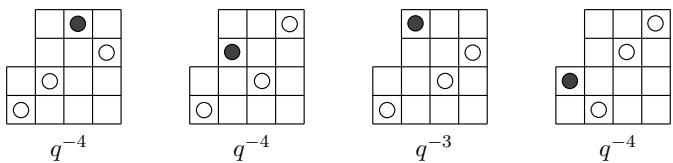}
\end{center}
\caption{\small The 4 members in $\R(\omega,3)$ along with their contributions to $r_q(\omega,3)$.}
\label{fig:white-rooks}
\end{figure}

Setting $q=1$ in Theorem \ref{thm:Ferrers-meaning-q-Stirling-1st}, we get a rook-interpretation of the numbers ${{\omega}\brack {k}}$.

\smallskip
\begin{cor} \label{cor:rook-xD-Stirling-1st} For any Dyck word $\omega\in\C_n$ in the Weyl algebra $W$, we have
$|\R(\omega,k)|={{\omega}\brack {k}}$.
\end{cor}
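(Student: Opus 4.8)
The plan is to obtain the corollary as the $q=1$ specialization of Theorem \ref{thm:Ferrers-meaning-q-Stirling-1st}. First I would observe that setting $q=1$ in the $q$-deformed commutation relation $Dx-qxD=1$ recovers the relation $Dx-xD=1$ of the ordinary Weyl algebra $W$, so the defining expansion (\ref{eq:q-word-1st-Stirling}) of ${{\omega}\brack {k}}_q$ collapses to the expansion (\ref{eq:word-1st-Stirling}) of ${{\omega}\brack {k}}$. In other words, ${{\omega}\brack {k}}_q$ is a Laurent polynomial in $q$ whose value at $q=1$ is precisely the integer ${{\omega}\brack {k}}$.

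Next I would evaluate the weight polynomial $r_q(\omega,k)$ at $q=1$. Since each summand $q^{-\inv(\sigma)}$ becomes $1$ regardless of the value of $\inv(\sigma)$, we get
\[
r_q(\omega,k)\big|_{q=1}=\sum_{\sigma\in\R(\omega,k)} 1=|\R(\omega,k)|.
\]
Combining the two specializations with the identity $r_q(\omega,k)={{\omega}\brack {k}}_q$ from Theorem \ref{thm:Ferrers-meaning-q-Stirling-1st} then yields $|\R(\omega,k)|={{\omega}\brack {k}}$, as desired.

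Since the whole argument amounts to substituting $q=1$, there is no genuine obstacle to overcome; the only point worth verifying is that the two specializations are compatible, i.e.\ that the equality of Theorem \ref{thm:Ferrers-meaning-q-Stirling-1st} is an identity of Laurent polynomials in the indeterminate $q$ (and not merely a numerical coincidence), so that it remains valid under the evaluation $q\mapsto 1$. This holds by construction, as both sides are defined by the same reduction process applied to the word $\omega$, and hence the specialization is immediate.
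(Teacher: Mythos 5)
Your proposal is correct and is exactly the paper's argument: the paper obtains Corollary \ref{cor:rook-xD-Stirling-1st} by setting $q=1$ in Theorem \ref{thm:Ferrers-meaning-q-Stirling-1st}, just as you do. Your additional remark that the identity $r_q(\omega,k)={{\omega}\brack {k}}_q$ holds as a Laurent polynomial identity in $q$ (so that the evaluation $q\mapsto 1$ is legitimate) is a careful elaboration of a point the paper leaves implicit, but it is the same proof.
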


\subsection{$q$-Lah number for words} Next, for any word $\omega$ with $n$ $x$'s and $n$ $D$'s, starting with an $x$, notice that the bottom row of $B_{\omega}$  has $n$ cells since the path $\omega$ starts with a north step. Let $B^{*}_{\omega}$ be the board obtained from $B_{\omega}$ by removing the bottom row. Let $\U(\omega,k)$ be the collection of $k$-rook placements of $B^{*}_{\omega}$. For such a rook placement $\sigma\in \U(\omega,k)$, we define the statistic $\inv'(\sigma)$ to be the number of cells in $B^{*}_{\omega}$ that either have
a rook on them, or do not have a rook above them on the same column or to the left of them in the same row. For example, the rook placement shown in Figure \ref{fig:Lah-board} is a member $\sigma\in\U(\omega,2)$ with $\inv'(\sigma)=6$, where the board $B^{*}_{\omega}$ is associated with the word $\omega=xxDxxDDD$.

\begin{figure}[ht]
\begin{center}
\includegraphics[width=1.6in]{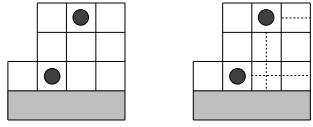}
\end{center}
\caption{\small A rook placement $\sigma\in\U(\omega,2)$ with $\inv'(\sigma)=6$ for $\omega=xxDxxDDD$.}
\label{fig:Lah-board}
\end{figure}

Let $u_q(\omega,k)$ denote the $q$-polynomial of $\U(\omega,k)$ defined as
\[
u_q(\omega,k)=\sum_{\sigma\in\U(\omega,k)} q^{-\inv'(\sigma)}.
\]

\begin{thm} \label{thm:Ferrers-meaning-q-Lah-number} For any word $\omega$ with $n$ $x$'s and $n$ $D$'s in the $q$-deformed Weyl algebra $W$, starting with an $x$, we have
$u_q(\omega,k)={{\omega}\bangle {k}}_q$, i.e.,
\[
\omega=\sum_{k=0}^{n} (-1)^{n-k} u_q(\omega,n-k) xD^{k}x^{k-1}.
\]
\end{thm}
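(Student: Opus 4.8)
The plan is to mimic the structure of the proof of Theorem \ref{thm:Ferrers-meaning-q-Stirling-1st}, translating the algebraic reduction process that defines ${{\omega}\bangle {k}}_q$ into a weighted count of rook placements on $B^{*}_{\omega}$. The starting point is the expansion in Eq.\,(\ref{eq:q-word-Lah-number}): the coefficient ${{\omega}\bangle {k}}_q$ counts the weighted ways to reduce the word $\omega$ to the monomial $xD^{k}x^{k-1}$ by repeatedly applying the $q$-deformed commutation relation. The key is to identify which reduction rule governs the $\{xD^{k}x^{k-1}\}$ basis, and to read each application of that rule geometrically on the Ferrers board with the bottom row removed.

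First I would pin down the local reduction. When a peak $xD$ sits in the interior of a word already partially reduced toward the target shape $xD^{k}x^{k-1}$, the relevant move uses Lemma \ref{lem:q-xD^n-reduction}, namely $xD^{n}=q^{-n}(D^{n}x-[n]_q D^{n-1})$ and its companion $x^{n}D=q^{-n}(Dx^{n}-[n]_q x^{n-1})$. As in the Stirling-first-kind argument, each reduction step carries a power of $q^{-1}$ and either preserves a peak-cell (moving an $x$ past a $D$) or annihilates a cell together with its row and column (the $[n]_q$ term, absorbing the factor that builds up the $q$-integer weights). Crucially, because the target word $xD^{k}x^{k-1}$ has exactly $k$ interior $D$'s flanked by the leading $x$ and the trailing block $x^{k-1}$, the surviving rooks should occupy $k$ cells of $B^{*}_{\omega}$ — and since the bottom row of $B_{\omega}$ is stripped off (corresponding to the mandatory leading $x$ that is never commuted away), the placements live precisely on $B^{*}_{\omega}$ and number $k$, matching $\U(\omega,k)$.

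Next I would account for the statistic $\inv'(\sigma)$. The definition counts cells of $B^{*}_{\omega}$ that either carry a rook or have no rook above them in their column and none to their left in their row. I expect each such cell to correspond to exactly one application of a reduction rule contributing $q^{-1}$: the rook-bearing cells record where the $q$-integers $[n]_q$ accumulate through the Lah recurrence in the Lemma preceding Theorem \ref{thm:weighted-q-Lah-number}, while the empty cells with no rook above or to the left record the commutation moves that shuffle $x$'s and $D$'s past one another. Here, unlike the Stirling case, there is no white/black dichotomy and no sign subtlety beyond the global $(-1)^{n-k}$, because the Lah basis expansion already produces that sign cleanly; I would verify the sign bookkeeping matches the stated expansion $\omega=\sum_{k} (-1)^{n-k} u_q(\omega,n-k)\, xD^{k}x^{k-1}$, paying attention to the index shift $k \leftrightarrow n-k$ that appears in the theorem statement.

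The main obstacle will be this index shift together with the precise matching of the $q$-integer factors $[k]_q[k+1]_q$ and $(1+q)[k]_q$ appearing in the recurrence for ${{n}\bangle {k}}_q$ to the weight contributions of individual cells. Rather than tracing the reduction step by step, the cleaner route is probably to show that the weight polynomial $u_q(\omega,k)$ satisfies the \emph{same} recurrence as ${{\omega}\bangle {k}}_q$, decomposing along the standard factorization $\omega=x\omega'D\omega''$ exactly as in Lemma \ref{lem:XD-Lah-number-recurrence} and Theorem \ref{thm:meaning-Lah-Dyck}, but now tracking the $q$-weights coming from $\inv'$. With the initial conditions fixed by the empty board, matching recurrences would then finish the proof by induction on $n$, sidestepping the need to justify the sign and the $q$-integer factors move-by-move.
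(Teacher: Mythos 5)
Your overall plan---reading the successive substitutions of $q^{-1}(Dx-1)$ for peaks as weighted cell/row/column deletions on the Ferrers board---is exactly the paper's route, but your central identification of the rooks is backwards, and this is a genuine gap rather than a bookkeeping detail to be "verified" later. When $\omega$ is reduced to $\mu=xD^{k}x^{k-1}$, the board $B_{\mu}$ is precisely a bottom row of $k$ cells, so nothing of the target survives in $B^{*}_{\omega}$ at all: the bottom row of $B_{\omega}$ is never a reduction site (a non-prefix peak always sits in a row indexed $\ge 2$, since the leading $x$ is never commuted away, and the prefix peak is exempt), and it merely shrinks to $B_{\mu}$ by losing columns. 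The rooks record the $n-k$ peak-cells replaced by $-1$, i.e.\ deleted together with their rows and columns; these are pairwise non-attacking and lie in $B^{*}_{\omega}$, so the coefficient of $xD^{k}x^{k-1}$ is a weighted count of \emph{$(n-k)$-rook} placements, namely $u_q(\omega,n-k)$. Your claim that the placements "number $k$, matching $\U(\omega,k)$" is false, and the paper's own example refutes it: for $\omega=xxDxxDDD$ the coefficient of $xD^{3}x^{2}$ sums to $10$ at $q=1$, matching the $10$ one-rook placements on the $10$-cell board $B^{*}_{\omega}$, whereas $|\U(\omega,3)|=12$. (The prose equality $u_q(\omega,k)={{\omega}\bangle {k}}_q$ in the theorem statement is itself a misprint---it contradicts the displayed expansion---so the hedge about an index shift cannot rescue a geometric picture built on "surviving" rooks.) Relatedly, there is no white/black dichotomy to import from Theorem \ref{thm:Ferrers-meaning-q-Stirling-1st}: in the Lah setting every rook plays the role of a black rook, and once this is set straight the weight accounting is immediate---every substitution, whether by $Dx$ or by $-1$, occurs at a cell that either carries a rook or has no rook above it in its column and none to its left in its row, each contributing $q^{-1}$, which is exactly the statistic $\inv'$.

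Your fallback---showing $u_q$ satisfies the recurrence of Lemma \ref{lem:XD-Lah-number-recurrence} via the standard factorization $\omega=x\omega'D\omega''$---has a second, independent defect: the theorem concerns \emph{arbitrary} words with $n$ $x$'s and $n$ $D$'s beginning with $x$, not only Dyck words. For a word whose lattice path crosses below the diagonal there is no standard factorization into Dyck factors, no graph $G_{\omega}$, and Lemma \ref{lem:XD-Lah-number-recurrence} and Theorem \ref{thm:meaning-Lah-Dyck} do not apply; nor does the paper establish any $q$-analogue of that recurrence for general words (only the recurrence for the single word $x^{n}D^{n}$ is derived). You would also need to work out how $\inv'$ decomposes across the factorization, which your sketch does not address. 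So the direct reduction argument is not merely "cleaner" here---of your two routes it is the only one that can reach the stated generality.
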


\begin{proof}  The proof is similar to the proof of Theorem \ref{thm:Ferrers-meaning-q-Stirling-1st}. Consider the expansion of $\omega$ over the sequence $\{xD^{k}x^{k-1}\}_{k\ge 0}$ in Eq.\,(\ref{eq:q-word-Lah-number}), the coefficient  ${{\omega}\bangle {k}}_q$ is the number of ways to obtain the word $xD^{k}x^{k-1}$ from $\omega$, by successively substituting $q^{-1}(Dx-1)$ for the $xD$'s other than the prefix peak (i.e., in the beginning of a word). In terms of Ferrers boards, the coefficient ${{\omega}\bangle {k}}_q$ is the number of weighted reductions of the board $B_{\omega}$ to $B_{\mu}$, where $\mu=xD^{k}x^{k-1}$. There are $n-k$ $xD$'s replaced by $-1$. Note that the bottom row of $B_{\mu}$ contains $k$ cells. So the deleted $n-k$ cells amount to $n-k$ non-attacking rooks of the board $B^{*}_{\omega}$, and all replacements take place at the cells that either have a rook on them, or do not have a rook above them on the same column or to the left of them in the same row. Each of these cells is assigned the weight of $q^{-1}$.
Hence the coefficient  ${{\omega}\bangle {k}}_q$ is the weight distribution of the $(n-k)$-rook placements of $B^{*}_{\omega}$, which is exactly the $q$-polynomial  $u_q(\omega,n-k)$ of $\U(\omega,n-k)$.
\end{proof}

\begin{exa} {\rm For $\omega=xxDxxDDD$, we have the expansion
\begin{align*}
\omega=\sum_{k=0}^4 (-1)^{4-k}{{\omega}\bangle {k}}_q xD^{k}x^{k-1} &=-(\frac{1}{q^{7}}+\frac{3}{q^{6}}+\frac{4}{q^{5}}+\frac{3}{q^{4}}+\frac{1}{q^{3}})xD \\
&\qquad +(\frac{2}{q^{4}}+\frac{5}{q^{5}}+\frac{7}{q^{6}}+\frac{6}{q^{7}}+\frac{3}{q^{8}}+\frac{1}{q^{9}})xD^2x \\
&\qquad -(\frac{1}{q^{10}}+\frac{2}{q^{9}}+\frac{3}{q^{8}}+\frac{3}{q^{7}}+\frac{1}{q^{6}})xD^3x^2-\frac{1}{q^{10}}xD^4x^3.
\end{align*}
Note that the polynomial $-u_q(\omega,1)$ coincides with the coefficient of $xD^3x^2$. The ten members in $\U(\omega,1)$ and their weights in $u_q(\omega,1)$ are shown in Figure \ref{fig:Lah-rooks}.
}
\end{exa}

\begin{figure}[ht]
\begin{center}
\psfrag{-6}[][][0.85]{$q^{-6}$}
\psfrag{-7}[][][0.85]{$q^{-7}$}
\psfrag{-8}[][][0.85]{$q^{-8}$}
\psfrag{-9}[][][0.85]{$q^{-9}$}
\psfrag{-10}[][][0.85]{$q^{-10}$}
\includegraphics[width=4.0in]{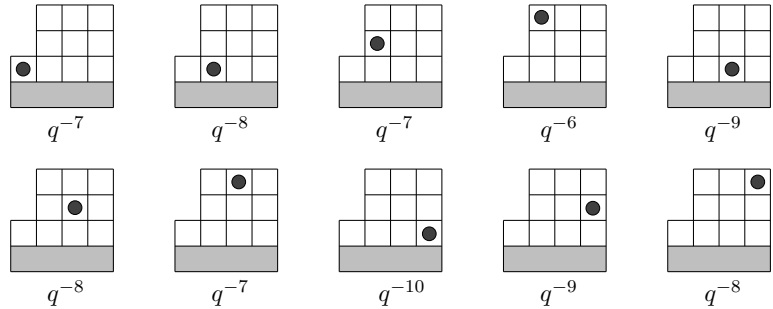}
\end{center}
\caption{\small The members in $\U(\omega,1)$ and their weights in $u_q(\omega,1)$ for $\omega=xxDxxDDD$.}
\label{fig:Lah-rooks}
\end{figure}

Setting $q=1$ in Theorem \ref{thm:Ferrers-meaning-q-Lah-number}, we get a rook-interpretation of the numbers ${{\omega}\bangle {k}}$.

\smallskip
\begin{cor} \label{cor:rook-xD-Lah} For any word $\omega$ with $n$ $x$'s and $n$ $D$'s in the Weyl algebra $W$, starting with an $x$, we have
$|\U(\omega,k)|={{\omega}\bangle {k}}$.
\end{cor}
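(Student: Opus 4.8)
The plan is to derive this as the $q=1$ specialization of Theorem \ref{thm:Ferrers-meaning-q-Lah-number}, exactly paralleling how Corollary \ref{cor:rook-xD-Stirling-1st} was obtained from Theorem \ref{thm:Ferrers-meaning-q-Stirling-1st}. That theorem already carries out all the combinatorial work — matching the iterated substitution $xD\mapsto q^{-1}(Dx-1)$ against rook placements on the truncated board $B^{*}_{\omega}$ — and establishes the weighted identity $u_q(\omega,k)={{\omega}\bangle {k}}_q$. So the corollary should follow simply by evaluating both sides at $q=1$.

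First I would observe that the left-hand side collapses to a plain count. In
\[
u_q(\omega,k)=\sum_{\sigma\in\U(\omega,k)} q^{-\inv'(\sigma)},
\]
each summand $q^{-\inv'(\sigma)}$ equals $1$ at $q=1$ regardless of the value of the statistic $\inv'(\sigma)$, so $u_q(\omega,k)\big|_{q=1}=|\U(\omega,k)|$, the plain cardinality of the set of rook placements of $B^{*}_{\omega}$.

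Second I would check that the right-hand side specializes to the ordinary $xD$-Lah number. At $q=1$ the defining relation $Dx-qxD=1$ of the $q$-deformed Weyl algebra becomes the Weyl relation $Dx-xD=1$, so the expansion Eq.\,(\ref{eq:q-word-Lah-number}) defining ${{\omega}\bangle {k}}_q$ reduces verbatim to the expansion Eq.\,(\ref{eq:word-Lah-number}) defining ${{\omega}\bangle {k}}$. Since the elements $\{xD^{k}x^{k-1}\}_{k\ge 0}$ are linearly independent in $W$, matching coefficients term by term gives ${{\omega}\bangle {k}}_q\big|_{q=1}={{\omega}\bangle {k}}$. Combining the two specializations with Theorem \ref{thm:Ferrers-meaning-q-Lah-number} then yields $|\U(\omega,k)|={{\omega}\bangle {k}}$.

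I expect no genuine obstacle here, as every substantive step already lives in the proof of Theorem \ref{thm:Ferrers-meaning-q-Lah-number}; the one point deserving care is the rook-count index. The displayed form of that theorem reads $\omega=\sum_{k}(-1)^{n-k}u_q(\omega,n-k)\,xD^{k}x^{k-1}$, so before setting $q=1$ I would confirm that the index on $\U$ stated in the corollary is the one actually produced by the $(n-k)$-cell reduction, reconciling the two index conventions so that the final specialization is stated consistently.
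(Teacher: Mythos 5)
Your proposal is correct and is essentially identical to the paper's argument: the paper obtains this corollary precisely by setting $q=1$ in Theorem \ref{thm:Ferrers-meaning-q-Lah-number}, and your two observations (that $u_q(\omega,k)\big|_{q=1}=|\U(\omega,k)|$ and that Eq.\,(\ref{eq:q-word-Lah-number}) reduces to Eq.\,(\ref{eq:word-Lah-number}) at $q=1$) are the whole content of that step. Your caution about the index is well placed and worth acting on: the theorem's displayed expansion makes $u_q(\omega,n-k)$ the coefficient of $xD^{k}x^{k-1}$ (confirmed by the paper's example, where $|\U(\omega,1)|=10={{\omega}\bangle {3}}$ for $n=4$), so the consistently indexed conclusion is $|\U(\omega,n-k)|={{\omega}\bangle {k}}$, and the corollary as printed (like the theorem's in-text phrase ``$u_q(\omega,k)={{\omega}\bangle {k}}_q$'') conflates the two conventions.
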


\section{Enumeration by Rook Factorization Theorem} In this section, we evaluate the numbers ${{\omega}\brack {k}}$ and ${{\omega}\bangle {k}}$ for words $\omega$, making use of the rook-placement enumerations obtained in the previous section.

For a word $\omega\in W$ with $n$ $x$'s and $n$ $D$'s, let $c_i$ be the number of cells in the $i$th column of $B_{\omega}$ for $1\le i\le n$. Goldman et al. \cite{GJW} obtained a Rook Factorization Theorem for a rook polynomial in falling factorials to be completely factorized into linear factors involving the column-heights (see also \cite{Varvak}). The $k$th \emph{falling factorial} of $z$ is
\[
z^{\underline{k}}=z(z-1)\cdots (z-k+1).
\]

\begin{thm}{\rm\bf (Goldman-Joichi-White)}  For a Ferrers Board $B$ with column-heights $c_1,\dots,c_n$,
\[
\sum_{k=0}^{n} r_k(B^{\perp})z^{\underline{n-k}}=\prod_{i=1}^n (z-c_i+i),
\]
where $B^{\perp}$ is the complement of $B$ within the $n\times n$ square and $r_k(B^{\perp})$ is the number of  $k$-rook placements on $B^{\perp}$.
\end{thm}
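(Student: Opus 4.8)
The plan is to prove the identity by counting, in two different ways, the placements of $n$ non-attacking rooks on an enlarged board, and then to invoke the fact that a polynomial identity valid at all non-negative integers $z$ holds identically. First I would fix a non-negative integer $z$ and form the augmented board $\tilde{B}$ obtained from $B^{\perp}$ by appending $z$ full rows of length $n$ below it; the $i$th column of $B^{\perp}$ has $n-c_i$ cells, so the $i$th column of $\tilde{B}$ has height $(n-c_i)+z$. Since $\tilde{B}$ has exactly $n$ columns, every placement of $n$ non-attacking rooks on $\tilde{B}$ uses each column exactly once, and the whole argument reduces to counting such placements.

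For the product side, I would place the rooks one column at a time, exploiting the Ferrers structure. Since $B$ is a Ferrers board its columns are weakly increasing in height, $c_1\le\cdots\le c_n$, so the columns of $B^{\perp}$ (and of $\tilde{B}$) are weakly decreasing in height. Processing the columns in order of increasing height, that is, from column $n$ down to column $1$, guarantees that every row already occupied by a previously placed rook still lies inside the (taller) current column; hence when column $i$ is reached, exactly $n-i$ rows are blocked and the number of admissible cells is $\big((n-c_i)+z\big)-(n-i)=z-c_i+i$, independently of the earlier choices. Multiplying these counts over all columns yields $\prod_{i=1}^n(z-c_i+i)$.

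For the sum side, I would split each placement according to how many rooks land in the top part $B^{\perp}$. If $k$ rooks are placed on $B^{\perp}$, there are by definition $r_k(B^{\perp})$ ways to do so, and they occupy $k$ distinct columns; the remaining $n-k$ rooks must lie in the bottom $z\times n$ rectangle, one in each of the remaining $n-k$ columns and all in distinct rows, which can be done in $z(z-1)\cdots(z-(n-k)+1)=z^{\underline{n-k}}$ ways. No top rook and bottom rook can share a row, since they lie in disjoint blocks of rows, so summing over $k$ gives $\sum_{k=0}^{n} r_k(B^{\perp})\,z^{\underline{n-k}}$. Equating the two counts proves the identity for every non-negative integer $z$; since both sides are polynomials in $z$ of degree $n$, they agree as polynomials.

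The step I expect to be the crux is the validity of the column-by-column product count: the factorization into the clean factors $z-c_i+i$ hinges on choosing the processing order so that each newly placed rook blocks a row that genuinely exists in every column treated later. This is precisely where the Ferrers (monotone column-height) hypothesis enters, and it is the only place where the shape of $B$, rather than merely its multiset of column-heights, is used.
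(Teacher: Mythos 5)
Your proof is correct, and it is essentially the classical Goldman--Joichi--White double-counting argument: augment $B^{\perp}$ by $z$ full rows of length $n$, count the $n$-rook placements of the augmented board column by column from the shortest column to the tallest (the weakly decreasing column heights of $B^{\perp}$, inherited from the weakly increasing heights $c_1\le\cdots\le c_n$ of $B$, guarantee that each previously placed rook blocks exactly one cell of every later, taller column, giving the factor $(z-c_i+i)$ at column $i$ independently of earlier choices), then stratify the same placements by the number $k$ of rooks falling in $B^{\perp}$ to obtain $\sum_{k}r_k(B^{\perp})z^{\underline{n-k}}$, and finally upgrade the identity from non-negative integer values of $z$ to a polynomial identity. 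Note, however, that the paper does not prove this statement at all: it is quoted as a known theorem of Goldman, Joichi and White, so there is no internal proof to compare with. The closest internal analogues, the factorizations for the $xD$-Stirling and $xD$-Lah numbers (Theorems \ref{thm:rook-factorization-Stirling-1st} and \ref{thm:rook-factorization-Lah-number}), are proved by a genuinely different, non-bijective method due to Varvak: realize $x$ as multiplication by $t$ and $D$ as differentiation, apply the word $\omega$ to $t^z$, and read off one linear factor $(z-c_j+j)$ from each application of $D$. Your double-counting route is more elementary and self-contained, and it proves the quoted theorem in full generality for arbitrary Ferrers boards; the operator method used in the paper buys the word-specific expansions directly and carries over verbatim to the $q$-deformed algebra (Theorems \ref{thm:rook-factorization-q-Stirling-1st} and \ref{thm:rook-factorization-q-Lah-number}).
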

We obtain analogous results for the rook placements in $\R(\omega,k)$ and in $\U(\omega,k)$, respectively.

\subsection{Evaluating $xD$-Stirling number of the 1st kind by rook factorizations}
We study the evaluation of the numbers ${{\omega}\brack {k}}$ for Dyck words $\omega\in\C_n$. We find that their signed generating function $\sum_{k=0}^n (-1)^{n-k} {{\omega}\brack {k}}z^k$ can be linearly factorized, involving the column-heights of the Ferrers board $B_{\omega}$. We prove the following rook-factorization result, making use of Varvak's method in \cite[Theorem 4.1]{Varvak}.

\begin{thm} \label{thm:rook-factorization-Stirling-1st} For any Dyck word $\omega\in\C_n$ in the Weyl algebra $W$ with the associated Ferrers board $B_{\omega}$, let $c_i$ be the number of cells in the $i$th column of $B_{\omega}$ for $1\le i\le n$. Then
\[
\sum_{k=0}^n (-1)^{n-k} {{\omega}\brack {k}}z^k=\prod_{i=1}^n (z-c_i+i).
\]
\end{thm}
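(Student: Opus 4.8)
The plan is to identify the polynomial $p(z):=\sum_{k=0}^n(-1)^{n-k}{\omega\brack k}z^k$ on the left-hand side with an eigenvalue of the operator $\omega$, and then read that eigenvalue off directly from the word. The starting point is the observation that each monomial $x^j$ is an eigenvector of $xD$: since $xD(x^j)=x\cdot jx^{j-1}=jx^j$, we get $(xD)^k(x^j)=j^k x^j$ for every $k\ge 0$ and every integer $j\ge 0$. Applying both sides of the defining expansion in Eq.\,(\ref{eq:word-1st-Stirling}) to $x^j$ therefore yields $\omega(x^j)=p(j)\,x^j$. Thus the left-hand side of the asserted identity is completely pinned down once we know the scalar by which $\omega$ dilates each $x^j$.

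First I would compute $\omega(x^j)$ by hand, applying the letters of $\omega=w_1w_2\cdots w_{2n}$ to $x^j$ from the right. Each letter $x$ raises the current exponent by $1$, while each letter $D$ first multiplies the accumulated scalar by the current exponent and then lowers the exponent by $1$. Consequently $\omega(x^j)=\bigl(\prod_{i=1}^n e_i\bigr)x^j$, where $e_i$ is the exponent seen by the $i$th east step $D$ at the moment it acts. The task is then to evaluate $e_i$ in terms of the board $B_\omega$.

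The key bookkeeping step is the following. Labelling the east steps $1,\dots,n$ from left to right as in the construction of $G_\omega$, the $i$th east step occurs at height $c_i$ (its column has $c_i$ cells), so it is the $(i+c_i)$th letter of $\omega$, and to its right lie exactly $n-i$ east steps and $n-c_i$ north steps. Since we process $\omega$ from the right, by the time the $i$th $D$ acts the exponent has been raised $n-c_i$ times and lowered $n-i$ times, giving $e_i=j+(n-c_i)-(n-i)=j-c_i+i$. For every sufficiently large integer $j$ (say $j\ge n$) all intermediate exponents stay nonnegative, so no term is annihilated and the computation is exact; hence $p(j)=\prod_{i=1}^n(j-c_i+i)$ for infinitely many $j$. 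As $p(z)$ and $\prod_{i=1}^n(z-c_i+i)$ are polynomials of degree at most $n$ agreeing at infinitely many points, they coincide, which is the claim (and incidentally forces the top coefficient ${\omega\brack n}=1$).

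I expect the only delicate point to be the exponent count $e_i=j-c_i+i$, that is, correctly translating the column-height data of $B_\omega$ into the order in which the letters of $\omega$ are applied; here the inequality $c_i\ge i$, coming from the Dyck condition, guarantees that each factor is a genuine nonnegative shift of $z$. An alternative route, closer to the theme of this section and to Varvak's method, is to invoke Corollary \ref{cor:rook-xD-Stirling-1st} to replace ${\omega\brack k}$ by $|\R(\omega,k)|$ and then match $\sum_k(-1)^{n-k}|\R(\omega,k)|z^k$ with the factorial rook polynomial $\sum_k r_k(B_\omega^\perp)z^{\underline{n-k}}$ of the complementary board, whose factorization is supplied by the Goldman--Joichi--White theorem; this, however, requires an extra bijection between the white/black rook decomposition of $B_\omega$ and rook placements on $B_\omega^\perp$, which the eigenvalue argument avoids.
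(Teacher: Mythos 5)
Your proof is correct and is essentially the paper's own argument: the paper likewise applies $\omega$ to a power function (there $t^z$ with $z$ a real parameter, acting by $x\colon f\mapsto tf$ and $D=\frac{d}{dt}$), notes $(xD)^k t^z=z^k t^z$, and reads off the factor contributed by the $i$th $D$ as $(z-c_i+i)$ by exactly your count of the $x$'s and $D$'s lying to its right. The only cosmetic difference is that you specialize to integer exponents $x^j$, $j\ge n$, and finish by polynomial interpolation, whereas the paper keeps $z$ as a free parameter so the identity falls out directly.
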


\begin{proof}
Consider the symbols $x,D$ of the Weyl algebra $W$ as the differential operators applied to polynomials $f(t)=t^z$, where $x$ acts as multiplication by $t$, $D=\frac{d}{dt}$ and $z$ is a real number. Note that $(xD)t^z=(t\frac{d}{dt}) t^z=zt^{z}$ and hence $(xD)^{k}t^z=(xD)^{k-1}zt^{z}=\cdots=z^{k}t^z$.   By Eq.\,(\ref{eq:word-1st-Stirling}), we have
\begin{align*}
\omega(t^z)  &= \sum_{k=0}^n (-1)^{n-k}{{\omega}\brack k} (xD)^{k} t^z \\
        &= \sum_{k=0}^n (-1)^{n-k}{{\omega}\brack k} z^k t^z.
\end{align*}
On the left-hand side, $\omega(t^z)$, the application of the $j$th $D$ (from the left) of $\omega$ to $t^z$ gives the linear factor $(z+b_x-b_D)$, where $b_x$ (resp. $b_D$) is the number of times $x$ (resp. $D$) was previously applied, i.e., on the right of the $j$th $D$. Since there are $n-c_j$ $x$'s and $n-j$ $D$'s to the right of the $j$th $D$, we have $b_x=n-c_j$ and $b_D=n-j$. Hence
\[
\omega(t^z)=\left(\prod_{j=1}^{n} (z-c_j+j) \right) t^z.
\]
Setting $t=1$, the assertion follows.
\end{proof}

With this rook-factorization, we remark that the $xD$-Stirling number ${{\omega}\brack {k}}$ can be evaluated in terms of elementary symmetric function. The $k$th \emph{elementary symmetric polynomial} over variables $\{x_1,x_2,\dots,x_n\}$ is
\begin{align*}
e_k(x_1,x_2,\dots,x_n) &= \sum_{i_1<\cdots< i_k} x_{i_1}x_{i_2}\cdots x_{i_k} \quad\mbox{\rm ($k\ge 1$),}\\
e_0(x_1,x_2,\dots,x_n) &= 1.
\end{align*}
It is an elementary fact that
\begin{equation} \label{eq:elementary-symmetric-function}
(z-x_1)(z-x_2)\cdots (x-x_n)=\sum_{k=0}^{n} (-1)^k e_k(x_1,x_2,\dots,x_n) z^{n-k}.
\end{equation}
By Theorem \ref{thm:rook-factorization-Stirling-1st},  we evaluate the number ${{\omega}\brack {k}}$, making use of the column-heights of the Ferrers board $B_{\omega}$, as follows.

\begin{cor} For any Dyck word $\omega\in\C_n$ in the Weyl algebra $W$ with the associated Ferrers board $B_{\omega}$, let $c_i$ be the number of cells in the $i$th column of $B_{\omega}$ for $1\le i\le n$. Then we have
\[
{{\omega}\brack {k}}= e_{n-k}(c_1-1,c_2-2,\dots,c_n-n).
\]
\end{cor}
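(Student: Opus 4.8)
The plan is to read the Corollary off directly from Theorem \ref{thm:rook-factorization-Stirling-1st} by expanding the factored polynomial on its right-hand side with the elementary-symmetric identity of Eq.\,(\ref{eq:elementary-symmetric-function}) and then comparing coefficients of powers of $z$. No new combinatorics is needed; the argument is a purely formal coefficient-matching computation, and the only bookkeeping is the index shift between the exponent $k$ on $z$ and the degree $n-k$ of the symmetric polynomial.

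First I would set $x_i := c_i - i$ for $1 \le i \le n$, so that the product appearing in Theorem \ref{thm:rook-factorization-Stirling-1st} becomes $\prod_{i=1}^n (z - x_i)$. Applying Eq.\,(\ref{eq:elementary-symmetric-function}) to these variables gives
\[
\prod_{i=1}^n (z - c_i + i) = \sum_{j=0}^{n} (-1)^j e_j(c_1 - 1, \dots, c_n - n)\, z^{n-j}.
\]
I would then reindex by setting $k = n - j$, so that $z^{n-j} = z^k$ and $(-1)^j = (-1)^{n-k}$, which rewrites the expansion as
\[
\prod_{i=1}^n (z - c_i + i) = \sum_{k=0}^{n} (-1)^{n-k} e_{n-k}(c_1 - 1, \dots, c_n - n)\, z^{k}.
\]

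Finally I would invoke Theorem \ref{thm:rook-factorization-Stirling-1st}, which identifies this same product with $\sum_{k=0}^n (-1)^{n-k} {{\omega}\brack {k}} z^k$. Since the two polynomials in $z$ agree identically, their coefficients of $z^k$ must coincide for each $k$; cancelling the common sign $(-1)^{n-k}$ then yields ${{\omega}\brack {k}} = e_{n-k}(c_1 - 1, \dots, c_n - n)$, as claimed. There is essentially no obstacle here, since the statement is a formal consequence of the rook-factorization theorem already established; the only point requiring care is the alignment of the index and sign conventions, which the substitution $j = n-k$ handles cleanly.
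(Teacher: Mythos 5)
Your proof is correct and follows exactly the route the paper intends: the corollary is stated as an immediate consequence of Theorem \ref{thm:rook-factorization-Stirling-1st} together with the expansion in Eq.\,(\ref{eq:elementary-symmetric-function}), and your coefficient comparison with the substitution $j=n-k$ is precisely that argument carried out explicitly. No gaps; the sign and index bookkeeping are handled correctly.
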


\subsection{Evaluating $xD$-Lah numbers by rook factorizations}

We define the $k$th \emph{rising factorial} of $z$ by
\[
z^{\overline{k}}=z(z+1)\cdots (z+k-1).
\]

\begin{thm} \label{thm:rook-factorization-Lah-number} For any word $\omega$ with $n$ $x$'s and $n$ $D$'s in the Weyl algebra $W$, starting with an $x$, let $c_i$ be the number of cells in the $i$th column of $B_{\omega}$ for $1\le i\le n$. Then
\[
\sum_{k=0}^n (-1)^{n-k} {{\omega}\bangle {k}} z^{\overline{k}}=\prod_{i=1}^n(z-c_i+i).
\]
\end{thm}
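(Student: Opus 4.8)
The plan is to mirror the proof of Theorem~\ref{thm:rook-factorization-Stirling-1st}, realizing $x$ and $D$ as the differential operators acting on the test function $f(t)=t^z$, where $x$ is multiplication by $t$ and $D=\frac{d}{dt}$, and then applying both sides of the Lah expansion in Eq.~(\ref{eq:word-Lah-number}) to $t^z$. The only genuinely new ingredient compared with Theorem~\ref{thm:rook-factorization-Stirling-1st} is the action of the building block $xD^kx^{k-1}$ on $t^z$: since $x^{k-1}t^z=t^{z+k-1}$, then $D^kt^{z+k-1}=z(z+1)\cdots(z+k-1)\,t^{z-1}=z^{\overline{k}}\,t^{z-1}$, and finally multiplying by $t$ restores the exponent, giving $xD^kx^{k-1}t^z=z^{\overline{k}}t^z$. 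Substituting this into Eq.~(\ref{eq:word-Lah-number}) immediately yields
\[
\omega(t^z)=\sum_{k=0}^n(-1)^{n-k}{{\omega}\bangle {k}}\,z^{\overline{k}}\,t^z,
\]
which is precisely the rising-factorial side of the claimed identity, carried by the common factor $t^z$.

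First I would establish the block computation above, taking care that the falling product generated by $D^k$ acting on $t^{z+k-1}$ collapses exactly to the rising factorial $z^{\overline{k}}$; this is where the asymmetry between Theorem~\ref{thm:rook-factorization-Stirling-1st} (which used $(xD)^kt^z=z^kt^z$) and the present statement enters, and it is the step I expect to demand the most care, although it remains a short calculation. Second, I would evaluate $\omega(t^z)$ directly on the word side exactly as in the earlier proof: applying $\omega$ to $t^z$ from the right, the $j$th $D$ (counted from the left) acts on a monomial $t^{z+b_x-b_D}$ and contributes the linear factor $z+b_x-b_D$, where $b_x$ and $b_D$ count the $x$'s and $D$'s lying to its right. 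Since the number of $x$'s to the left of the $j$th $D$ equals the $j$th column-height $c_j$, there are $b_x=n-c_j$ $x$'s and $b_D=n-j$ $D$'s to its right, so the factor is $z-c_j+j$, giving
\[
\omega(t^z)=\Bigl(\prod_{j=1}^n(z-c_j+j)\Bigr)t^z.
\]

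Finally, comparing the two evaluations of $\omega(t^z)$ and setting $t=1$ (so that the common factor $t^z$ equals $1$ and cancels) yields the identity. I expect the main obstacle to be purely bookkeeping: verifying that $c_j$ is exactly the number of $x$'s preceding the $j$th $D$, so that the column-height interpretation matches the operator count, and confirming that this factor-counting argument is valid for an arbitrary word starting with $x$ rather than only for a Dyck word. The operator realization never invokes the lattice path staying weakly above the diagonal, so no Dyck restriction is needed; the hypothesis that $\omega$ starts with $x$ is used only to guarantee that the expansion in Eq.~(\ref{eq:word-Lah-number}) over $\{xD^kx^{k-1}\}$ is defined.
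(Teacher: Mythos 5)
Your proposal is correct and follows essentially the same route as the paper's own proof: both realize $x$ as multiplication by $t$ and $D=\frac{d}{dt}$ acting on $t^z$, compute the key block identity $xD^{k}x^{k-1}t^{z}=z^{\overline{k}}t^{z}$, reuse the factor-counting argument of Theorem~\ref{thm:rook-factorization-Stirling-1st} to get $\omega(t^z)=\bigl(\prod_{j=1}^{n}(z-c_j+j)\bigr)t^z$, and set $t=1$. Your closing observations (that no Dyck restriction is needed and that starting with $x$ only guarantees the expansion in Eq.~(\ref{eq:word-Lah-number}) exists) are consistent with the paper's statement and add no gap.
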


\begin{proof} The proof is similar to the proof of Theorem \ref{thm:rook-factorization-Stirling-1st}. Note that $(\frac{d}{dt})^{k} t^z=z(z-1)\cdots (z-k+1) t^{z-k}$.  By Eq.\,(\ref{eq:word-Lah-number}), we have
\begin{align*}
\omega (t^{z}) &=\sum_{k=0}^n (-1)^{n-k} {{\omega}\bangle {k}} xD^{k}x^{k-1} t^{z} \\
   &=\sum_{k=0}^n (-1)^{n-k} {{\omega}\bangle {k}} xD^{k} t^{z+k-1} \\
   &=\sum_{k=0}^n (-1)^{n-k} {{\omega}\bangle {k}} z^{\overline{k}} t^{z}.
\end{align*}
On the left-hand side, by the same argument as in the proof of Theorem \ref{thm:rook-factorization-Stirling-1st}, we have
\[
\omega(t^z)=\left(\prod_{j=1}^{n} (z-c_j+j) \right) t^z.
\]
Setting $t=1$, the assertion follows.
\end{proof}

For a polynomial in rising factorials $P(z)=\sum_{k=0}^{n} p_k z^{\overline{k}}$, one can check that $p_k=\frac{1}{k!}\Delta^{k} P(-k)$, where $\Delta$ is the difference operator defined by $\Delta P(z)=P(z+1)-P(z)$. In fact, it is known \cite[Eq.\,(1.97)]{EC1} that
\[
p_k=\sum_{i=0}^{k} (-1)^{k-i}{{k}\choose {i}}P(i-k).
\]

By Theorem \ref{thm:rook-factorization-Lah-number},  we evaluate the number ${{\omega}\bangle {k}}$, making use of the column-heights of the Ferrers board $B_{\omega}$, as follows.

\begin{cor} For any word $\omega$ with $n$ $x$'s and $n$ $D$'s in the Weyl algebra $W$, starting with an $x$, let $P(z)=\prod_{j=1}^{n} (z-c_j+j)$, where $c_i$ is the number of cells in the $i$th column of $B_{\omega}$ for $1\le i\le n$. Then
\[
{{\omega}\bangle {k}}= \frac{1}{k!} \sum_{i=0}^{k} (-1)^{n-i}{{k}\choose {i}}P(i-k).
\]
\end{cor}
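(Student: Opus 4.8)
The plan is to obtain the corollary as an immediate consequence of the rook-factorization identity in Theorem~\ref{thm:rook-factorization-Lah-number} by extracting rising-factorial coefficients. First I would invoke that theorem to record
\[
P(z)=\prod_{j=1}^n(z-c_j+j)=\sum_{k=0}^n(-1)^{n-k}{{\omega}\bangle{k}}\,z^{\overline{k}},
\]
which exhibits $P$ as a polynomial written in the rising-factorial basis $\{z^{\overline{k}}\}_{k\ge 0}$. Reading off the coefficient of $z^{\overline{k}}$, we see that in the notation $P(z)=\sum_k p_k z^{\overline{k}}$ of the preceding remark we have $p_k=(-1)^{n-k}{{\omega}\bangle{k}}$. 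Thus the whole problem reduces to inverting this change of basis, i.e.\ to expressing each $p_k$ through the values of $P$.

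Next I would apply the coefficient-extraction formula stated immediately before the corollary, namely
\[
p_k=\frac{1}{k!}\sum_{i=0}^{k}(-1)^{k-i}{k\choose i}P(i-k),
\]
which is the expansion of $\tfrac{1}{k!}\Delta^{k}P(-k)$ in terms of the forward difference $\Delta$. Since this is quoted from \cite[Eq.\,(1.97)]{EC1}, I would take it as given; if a self-contained derivation were wanted, one substitutes $z\mapsto -z$, using $z^{\overline{k}}=(-1)^k(-z)^{\underline{k}}$, to convert the rising-factorial expansion into a falling-factorial one and then applies the classical Newton forward-difference interpolation for $z^{\underline{k}}$.

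Finally I would solve for the Lah number. Equating the two descriptions of $p_k$ gives
\[
(-1)^{n-k}{{\omega}\bangle{k}}=\frac{1}{k!}\sum_{i=0}^{k}(-1)^{k-i}{k\choose i}P(i-k),
\]
and multiplying by $(-1)^{n-k}$ moves the sign inside the sum, where the collapse $(-1)^{n-k}(-1)^{k-i}=(-1)^{n-i}$ yields exactly ${{\omega}\bangle{k}}=\tfrac{1}{k!}\sum_{i=0}^{k}(-1)^{n-i}{k\choose i}P(i-k)$, as claimed. The argument is pure bookkeeping once Theorem~\ref{thm:rook-factorization-Lah-number} is available, so I anticipate no genuine obstacle; the only place demanding care is the sign and the placement of the factor $\tfrac1{k!}$. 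To guard against an error there I would run the sanity check $k=n$: since $P$ is monic of degree $n$, its $n$th forward difference equals $n!$, so the formula returns ${{\omega}\bangle{n}}=1$, matching the leading coefficient in the expansion~Eq.\,(\ref{eq:word-Lah-number}).
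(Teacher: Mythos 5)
Your proof is correct and takes essentially the same route as the paper: the corollary is read off from Theorem \ref{thm:rook-factorization-Lah-number} together with the rising-factorial coefficient-extraction formula $p_k=\tfrac{1}{k!}\Delta^{k}P(-k)=\tfrac{1}{k!}\sum_{i=0}^{k}(-1)^{k-i}\binom{k}{i}P(i-k)$, followed by the sign bookkeeping $(-1)^{n-k}(-1)^{k-i}=(-1)^{n-i}$. You even correctly restore the factor $\tfrac{1}{k!}$ that the paper's displayed version of the quoted extraction formula omits (evidently a typo, since the corollary itself carries it), and your $k=n$ sanity check confirms the normalization ${{\omega}\bangle{n}}=1$.
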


\subsection{$q$-analogues of rook factorization results} Recall that the $q$-analogue of positive integer $n$ is $[n]_q=1+q+\cdots+q^{n-1}$. The commutation relation $Dx-qxD=1$ of the $q$-deformed Weyl algebra is realized by the $q$-analogue of the derivative $D=D_q=\frac{d}{dt}$ acting on polynomials $f(t)$ by
\[
(D_q f)(t) := \frac{f(qt)-f(t)}{(q-1)t},
\]
and the operator $x$ acting by multiplication by $t$. Note that $D_q(t^n)=[n]_q t^{n-1}$.
Analogous to Theorem \ref{thm:rook-factorization-Stirling-1st} and Theorem \ref{thm:rook-factorization-Lah-number}, we have the following variations of the $q$-rook Factorization Theorem of Garsia and Remmel \cite{GR}.

\begin{thm} \label{thm:rook-factorization-q-Stirling-1st} Given a Dyck word $\omega\in\C_n$ in the $q$-deformed Weyl algebra $W$ with the associated Ferrers board $B_{\omega}$, let $c_i$ be the number of cells in the $i$th column of $B_{\omega}$ for $1\le i\le n$. Then
\[
\sum_{k=0}^n (-1)^{n-k} {{\omega}\brack {k}}_q [z]_q^k=\prod_{i=1}^n [z-c_i+i]_q.
\]
\end{thm}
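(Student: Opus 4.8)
The plan is to mimic the proof of Theorem~\ref{thm:rook-factorization-Stirling-1st}, replacing the ordinary derivative by the $q$-difference operator $D_q$ and evaluating the word $\omega$ on the monomial $t^z$. First I would realize the $q$-deformed Weyl algebra on the space of formal powers $t^z$: let $x$ act as multiplication by $t$ and let $D=D_q$, so that $D_q(t^z)=\frac{(qt)^z-t^z}{(q-1)t}=[z]_q\,t^{z-1}$, where $[z]_q:=\frac{q^z-1}{q-1}$ extends the $q$-bracket to an arbitrary exponent and reduces to $1+q+\cdots+q^{z-1}$ when $z$ is a positive integer. The crucial computation is then $(xD_q)t^z=t\cdot[z]_q t^{z-1}=[z]_q\,t^z$, whence $(xD_q)^k t^z=[z]_q^k\,t^z$.

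Applying the defining expansion in Eq.~(\ref{eq:q-word-1st-Stirling}) to $t^z$ gives the right-hand side
\[
\omega(t^z)=\sum_{k=0}^n (-1)^{n-k}{{\omega}\brack k}_q [z]_q^k\,t^z,
\]
so it remains to evaluate $\omega(t^z)$ directly. Here I would track the exponent of $t$ as the operators of $\omega$ are applied from right to left: applying $x$ raises the exponent by one, while applying $D_q$ lowers it by one and multiplies by the $q$-bracket of the current exponent. Since we start from the single monomial $t^z$ and each operator preserves monomiality, the accumulated scalar is a product of such brackets, one contributed by each $D$.

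The bookkeeping of these factors is identical to that in Theorem~\ref{thm:rook-factorization-Stirling-1st}: the $j$th $D$ from the left of $\omega$ is applied when the exponent equals $z+b_x-b_D$, where $b_x$ (resp.\ $b_D$) counts the $x$'s (resp.\ $D$'s) lying to its right, and the column-height interpretation of $B_\omega$ gives $b_x=n-c_j$ and $b_D=n-j$. Hence that $D$ contributes the factor $[z-c_j+j]_q$, and
\[
\omega(t^z)=\left(\prod_{j=1}^n [z-c_j+j]_q\right)t^z.
\]
Equating the two expressions for $\omega(t^z)$ and cancelling $t^z$ (for instance by setting $t=1$) yields the claimed identity.

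I expect the main obstacle to be conceptual rather than computational, namely checking that the $q$-bracket $[z]_q=\frac{q^z-1}{q-1}$ behaves correctly at a non-integer exponent: one must confirm that $D_q(t^z)=[z]_q t^{z-1}$ holds for the formal power $t^z$, and that the powers $[z]_q^k$ produced on the right-hand side agree with those appearing in the statement. Once this extension of the $q$-analogue is in place, the monomiality of the intermediate expressions makes the exponent-tracking completely parallel to the classical case, and no new combinatorial input beyond the column-height relations $b_x=n-c_j$ and $b_D=n-j$ is needed.
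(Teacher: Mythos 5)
Your proposal is correct and is exactly the paper's argument: the paper proves this theorem by remarking that, with the realization $x=$ multiplication by $t$ and $D=D_q$ satisfying $D_q(t^z)=[z]_q t^{z-1}$, the proof of Theorem~\ref{thm:rook-factorization-Stirling-1st} carries over verbatim, which is precisely your exponent-tracking computation giving the factor $[z-c_j+j]_q$ for the $j$th $D$. The only detail worth stating explicitly (as the paper does in Section~5.3) is that this operator realization indeed satisfies $Dx-qxD=1$, so the formal manipulation with $[z]_q=\frac{q^z-1}{q-1}$ is legitimate.
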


\begin{thm} \label{thm:rook-factorization-q-Lah-number} For any  word $\omega$ with $n$ $x$'s and $n$ $D$'s in the $q$-deformed Weyl algebra $W$, starting with an $x$, let $c_i$ be the number of cells in the $i$th column of $B_{\omega}$ for $1\le i\le n$. Then
\[
\sum_{k=0}^n (-1)^{n-k} {{\omega}\bangle {k}}_q [z]_q[z+1]_q\cdots [z+k-1]_q=\prod_{i=1}^n [z-c_i+i]_q.
\]
\end{thm}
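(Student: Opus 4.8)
The plan is to mimic the proofs of Theorem~\ref{thm:rook-factorization-Stirling-1st} and Theorem~\ref{thm:rook-factorization-Lah-number}, replacing the ordinary derivative by the $q$-derivative $D_q$. I would realize the $q$-deformed Weyl algebra by letting $x$ act as multiplication by $t$ and $D=D_q$ act on monomials $t^z$ with $z$ a real parameter, so that $D_q(t^z)=\frac{(qt)^z-t^z}{(q-1)t}=\frac{q^z-1}{q-1}\,t^{z-1}=[z]_q\,t^{z-1}$, under the standard extension $[z]_q=(q^z-1)/(q-1)$ to arbitrary exponents. I would then apply both sides of the expansion in Eq.\,(\ref{eq:q-word-Lah-number}) to $f(t)=t^z$ and compare the two resulting scalars, using that a word with $n$ $x$'s and $n$ $D$'s preserves the degree, so $\omega(t^z)=C\,t^z$ for some scalar $C$ (a rational function of $q$ and $q^z$).

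On the right-hand side I would compute the action of each building block $xD^kx^{k-1}$ on $t^z$. First $x^{k-1}t^z=t^{z+k-1}$; then iterating $D_q(t^m)=[m]_q t^{m-1}$ gives $D_q^k t^{z+k-1}=[z]_q[z+1]_q\cdots[z+k-1]_q\,t^{z-1}$; and finally multiplication by $t$ yields $xD^kx^{k-1}t^z=[z]_q[z+1]_q\cdots[z+k-1]_q\,t^z$, the $q$-rising factorial. Summing against the coefficients ${{\omega}\bangle {k}}_q$ reproduces precisely the left-hand side of the claimed identity (times $t^z$).

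On the left-hand side I would evaluate $\omega(t^z)$ directly, processing the operators of $\omega$ from right to left. Each $x$ merely multiplies by $t$ and commutes past the accumulated scalar, while each $D$ produces a factor equal to the $q$-bracket of the current exponent of $t$. When the $j$th $D$ (counted from the left) acts, the operators to its right have already been applied, so the exponent is $z+b_x-b_D$ with $b_x=n-c_j$ and $b_D=n-j$, exactly as in Theorem~\ref{thm:rook-factorization-Stirling-1st}; here $c_j$ is the number of $x$'s preceding the $j$th $D$, which equals the height of the $j$th column of $B_{\omega}$. Hence the $j$th $D$ contributes $[z-c_j+j]_q$, so that $\omega(t^z)=\bigl(\prod_{j=1}^n[z-c_j+j]_q\bigr)t^z$. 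Equating the two computations and setting $t=1$ yields the identity.

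The only real obstacle is the bookkeeping on the left-hand side: one must confirm that, since $D_q$ is linear with $D_q(c\,t^m)=c\,[m]_q t^{m-1}$ for a scalar $c$, the accumulated scalar passes through unchanged and the sole new contribution at the $j$th $D$ is $[z-c_j+j]_q$, with no spurious $q$-power twist. Because $D_q(t^m)=[m]_q t^{m-1}$ holds verbatim for non-integer exponents, this step goes through exactly as in the non-$q$ case, so no genuine new difficulty arises beyond tracking the exponents. Finally, I would note that the hypothesis that $\omega$ starts with an $x$ is needed only to guarantee that the expansion in Eq.\,(\ref{eq:q-word-Lah-number}) is valid; the factorization computation itself applies to any word with $n$ $x$'s and $n$ $D$'s.
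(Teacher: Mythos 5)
Your proposal is correct and follows essentially the same route as the paper: the paper proves this theorem exactly by realizing $D$ as the $q$-derivative with $D_q(t^z)=[z]_q\,t^{z-1}$ and repeating the arguments of Theorems~\ref{thm:rook-factorization-Stirling-1st} and~\ref{thm:rook-factorization-Lah-number}, which is precisely what you carry out. Your explicit verification that $xD^kx^{k-1}t^z=[z]_q[z+1]_q\cdots[z+k-1]_q\,t^z$ and that the $j$th $D$ contributes $[z-c_j+j]_q$ is exactly the intended bookkeeping, which the paper leaves implicit.
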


Making use of the derivative $D_q(t^z)=[z]_q t^{z-1}$, the above two theorems can be proved by the same arguments as in the proofs of Theorem \ref{thm:rook-factorization-Stirling-1st} and Theorem \ref{thm:rook-factorization-Lah-number}.

\section{Evaluating $xD$-Stirling numbers by chromatic polynomials}
For a Dyck path $\omega\in\C_n$, an east step of $\omega$ is said to be at \emph{height} $j$ if the east step goes from the line $y=x+j+1$ to the line $y=x+j$. Let $h_i$ be the height of the $i$th east step of $\omega$ for $1\le i\le n$. Note that $h_i=c_i-i$, where $c_i$ is the height of the $i$th column of the Ferrers board $B_{\omega}$.   The \emph{chromatic polynomial} of a simple graph $G$, denoted as $\chi_G(z)$, is the number of proper vertex-coloring
of $G$ using $z$ colors.  By the construction of the quasi-threshold graph $G_{\omega}$ associated with $\omega$, the $j$th east step is associated with the vertex $j$, which is adjacent to $h_j$ vertices of $\{j+1,\dots,n\}$ in $G_{\omega}$.
So if we color the vertices of $G_{\omega}$ in reverse order, using $z$ colors, by the first-fit algorithm then the chromatic polynomial of $G_{\omega}$ is
\begin{equation} \label{eq:chromatic-poly-Dyck-word}
\chi_{G_{\omega}}(z)=\prod_{j=1}^{n} (z-h_j).
\end{equation}
This proves a result of Engbers et al. \cite[Claim 3.3]{EGH}.

As a consequence of Eq.\,(\ref{eq:chromatic-poly-Dyck-word}),
along with the rook factorization results in Theorems \ref{thm:rook-factorization-Stirling-1st} and \ref{thm:rook-factorization-Lah-number}, we have the following result.

\begin{cor}  \label{cor:rook-factor-chromatic-poly}  Given a Dyck word $\omega\in\C_n$ with the associated quasi-threshold graph $G_{\omega}$, we have
\[
\chi_{G_{\omega}}(z)=\sum_{k=0}^n (-1)^{n-k} {{\omega}\brack {k}}z^k=\sum_{k=0}^n (-1)^{n-k} {{\omega}\bangle {k}} z^{\overline{k}}.
\]
\end{cor}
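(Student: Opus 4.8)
The plan is to combine the chromatic-polynomial formula in Eq.\,(\ref{eq:chromatic-poly-Dyck-word}) with the two rook-factorization identities proved earlier, since all three quantities are revealed to be the same product $\prod_{j=1}^n(z-h_j)$. First I would record that for a Dyck word $\omega\in\C_n$ the height of the $j$th east step satisfies $h_j=c_j-j$, where $c_j$ is the height of the $j$th column of $B_\omega$; this identity was noted in the paragraph preceding the statement and is immediate from the geometry of the Dyck path and its Ferrers board. Substituting $h_j=c_j-j$ into Eq.\,(\ref{eq:chromatic-poly-Dyck-word}) gives
\[
\chi_{G_\omega}(z)=\prod_{j=1}^n(z-h_j)=\prod_{j=1}^n\bigl(z-(c_j-j)\bigr)=\prod_{j=1}^n(z-c_j+j),
\]
which is precisely the common right-hand side appearing in Theorem \ref{thm:rook-factorization-Stirling-1st} and Theorem \ref{thm:rook-factorization-Lah-number}.

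With this in hand the corollary is essentially a transitivity argument. Theorem \ref{thm:rook-factorization-Stirling-1st} asserts $\sum_{k=0}^n(-1)^{n-k}\genfrac{[}{]}{0pt}{}{\omega}{k}z^k=\prod_{i=1}^n(z-c_i+i)$, and Theorem \ref{thm:rook-factorization-Lah-number} asserts $\sum_{k=0}^n(-1)^{n-k}\genfrac{\langle}{\rangle}{0pt}{}{\omega}{k}z^{\overline{k}}=\prod_{i=1}^n(z-c_i+i)$; since both equal the same product, each equals $\chi_{G_\omega}(z)$ by the displayed computation. One point requiring a moment's care is the hypothesis matching: Theorem \ref{thm:rook-factorization-Lah-number} is stated for words starting with an $x$, whereas the corollary is stated for Dyck words; I would remark that every nonempty Dyck word begins with an $x$ (its first step is a north step, by definition of staying weakly above $y=x$), so the Lah factorization indeed applies to $\omega\in\C_n$, and the empty-word case $n=0$ is the trivial identity $1=1$.

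The argument is genuinely short because the hard analytic content has already been absorbed into the two rook-factorization theorems and into Eq.\,(\ref{eq:chromatic-poly-Dyck-word}); the only new ingredient is the bookkeeping identity $h_j=c_j-j$ linking the step-heights of the Dyck path to the column-heights of its Ferrers board. The main (and only modest) obstacle is to state this height-to-column translation unambiguously and to confirm the indexing conventions agree—both sequences are indexed by the $j$th east step / $j$th column read left to right—so that the three products are literally the same polynomial in $z$ rather than merely equal up to a reindexing. Once that is pinned down, chaining the equalities yields both claimed expressions for $\chi_{G_\omega}(z)$ simultaneously.
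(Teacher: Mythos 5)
Your proposal is correct and follows essentially the same route as the paper: the paper likewise derives $\chi_{G_\omega}(z)=\prod_{j=1}^n(z-h_j)$ with $h_j=c_j-j$ and then cites Theorems \ref{thm:rook-factorization-Stirling-1st} and \ref{thm:rook-factorization-Lah-number}, whose common right-hand side $\prod_{i=1}^n(z-c_i+i)$ makes the corollary a transitivity statement. Your extra remark that every nonempty Dyck word starts with an $x$ (so the Lah factorization applies) is a sound clarification the paper leaves implicit.
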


By Whitney's theorem \cite{Whitney}, the first identity in the above corollary provides another combinatorial interpretation of ${{\omega}\brack {k}}$ in terms of the subgraphs of $G_{\omega}$ without broken circuits.

Given a simple graph $G$ with $n$ vertices and a totally ordered edge set $(E(G),<)$, a \emph{broken circuit} of $G$ is a subgraph obtained from removing from some circuit in $G$ the greatest edge.

\begin{thm} {\rm\bf (Whitney)} Let $d_j$ be the number of subgraphs consisting of $j$ edges of $G$  without broken circuits. Then the chromatic polynomial $\chi_G(z)$ of $G$ is
\begin{equation}
\chi_{G}(z) =\sum_{k=0}^{n} (-1)^{n-k} d_{n-k} z^k.
\end{equation}
\end{thm}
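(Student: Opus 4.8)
The plan is to bypass the broken-circuit bookkeeping at first and instead start from the inclusion--exclusion (subgraph) expansion of the chromatic polynomial, and then to cancel the unwanted terms by a sign-reversing involution. First I would prove the expansion
\[
\chi_G(z)=\sum_{S\subseteq E(G)} (-1)^{|S|} z^{c(S)},
\]
where $c(S)$ denotes the number of connected components of the spanning subgraph $(V(G),S)$. This follows by inclusion--exclusion over the events $A_e$ (``both endpoints of $e$ receive the same color'') applied to the $z^{|V(G)|}$ arbitrary colorings: a proper coloring is exactly one avoiding every $A_e$, and the number of colorings making all edges of a fixed $S$ monochromatic is $z^{c(S)}$, since each component of $(V(G),S)$ must be assigned a single color.

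Next I would identify the surviving terms, using the given total order on $E(G)$. Call an edge $e$ a \emph{witness} for $S$ if the two endpoints of $e$ are already joined by a path using only edges of $S$ that are smaller than $e$. The key observation is that $S$ contains a broken circuit if and only if $S$ has a witness: a broken circuit is a cycle with its maximum edge $e_{\max}$ deleted, and its edges form an $S$-path between the endpoints of $e_{\max}$ using only edges smaller than $e_{\max}$, so $e_{\max}$ witnesses $S$; conversely a witness $e$ produces such a path in $S$, and that path is a broken circuit contained in $S$. In particular every broken-circuit-free (NBC) set is acyclic, hence a forest, and a $j$-edge forest satisfies $c(S)=n-j$ with $n=|V(G)|$.

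Then I would cancel the non-NBC terms by an involution. For a non-NBC set $S$ let $f(S)$ be its smallest witness and set $\iota(S)=S\triangle\{f(S)\}$. Writing $e=f(S)$, the endpoints of $e$ are connected using edges of $S$ smaller than $e$, so toggling $e$ leaves $c(S)$ unchanged (adding $e$ joins two vertices already in one component; removing $e$ leaves them joined by that smaller-edge path). Since the witness test for any edge $e'\le e$ involves only edges strictly smaller than $e$, which are unaffected by toggling $e$ itself, one checks that $f(\iota(S))=e$, whence $\iota(\iota(S))=S$; thus $\iota$ is a parity-reversing, $c$-preserving involution on the non-NBC sets and their contributions cancel in pairs. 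Combining the three steps gives
\[
\chi_G(z)=\sum_{S\ \text{NBC}}(-1)^{|S|}z^{c(S)}=\sum_{j\ge 0}(-1)^{j}d_{j}z^{n-j}=\sum_{k=0}^n(-1)^{n-k}d_{n-k}z^k.
\]

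I expect the main obstacle to be the verification that $\iota$ is genuinely an involution rather than merely sign-reversing. The delicate point is the stability of the chosen witness: one must argue that toggling $e=f(S)$ neither creates a smaller witness nor destroys $e$ as a witness, and this rests precisely on the \emph{locality} of the witness condition, namely that whether an edge witnesses a set depends only on the strictly smaller edges of that set. Once this locality is isolated, the remaining checks (component count preserved, parity flipped, and NBC sets being forests) are routine.
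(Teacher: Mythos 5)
Your argument is correct, but note that the paper never proves this statement: it is quoted as Whitney's classical theorem, with a citation to \cite{Whitney}, and is then used as a black box to reinterpret ${{\omega}\brack {k}}$ as counting broken-circuit-free subgraphs of $G_\omega$. So there is no internal proof to compare against; what you have supplied is a complete, self-contained proof --- the standard one, essentially Whitney's argument in the sign-reversing-involution form later popularized by Blass and Sagan. Your three steps are all sound: the inclusion--exclusion expansion $\chi_G(z)=\sum_{S\subseteq E(G)}(-1)^{|S|}z^{c(S)}$; the equivalence ``$S$ contains a broken circuit iff $S$ has a witness''; and the involution $\iota(S)=S\mathbin{\triangle}\{f(S)\}$ on non-NBC sets. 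The crux you isolate is exactly right: since whether $e'$ witnesses $S$ depends only on the edges of $S$ strictly below $e'$, toggling the smallest witness $e=f(S)$ can neither create a witness below $e$ nor destroy $e$ as a witness, so $f(\iota(S))=e$ and $\iota$ is a $c$-preserving, parity-reversing involution. Two small points deserve explicit mention in a written version: the witness $e$ need not belong to $S$ (so the involution both adds and deletes edges, as your definition via symmetric difference allows), and the witnessing path should be taken simple, so that $P\cup\{e\}$ is genuinely a circuit whose greatest edge is $e$. With those phrases inserted, the cancellation leaves exactly the NBC sets, which are forests, and a $j$-edge forest on $n$ vertices has $c(S)=n-j$, giving $\chi_G(z)=\sum_{j}(-1)^{j}d_{j}z^{n-j}=\sum_{k=0}^{n}(-1)^{n-k}d_{n-k}z^{k}$ as claimed. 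What your route buys, beyond what the paper offers, is an elementary and fully verifiable proof of the quoted theorem; it also sits well with the paper's Theorem 6.3, since for quasi-threshold graphs the NBC sets your involution leaves uncancelled are precisely the decreasing forests appearing there.
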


By Eq.\,(\ref{eq:chromatic-poly-Dyck-word}) and Corollary \ref{cor:rook-factor-chromatic-poly}, for a Dyck word $\omega\in\C_n$, it follows from Whitney's theorem that ${{\omega}\brack {k}}$ counts the number of subgraphs consisting of $n-k$ edges of $G_{\omega}$ without broken circuits. In the following, we present an immediate bijection between two families of subgraphs of $G_{\omega}$ enumerated by the number ${{\omega}\brack {k}}$.

\begin{thm} \label{thm:broken-circuit-bijection} Given a Dyck word $\omega\in\C_n$ with the associated quasi-threshold graph $G_{\omega}$, there is a bijection between the set of partitions of $G_{\omega}$ into $k$-component decreasing forests and the set
of subgraphs consisting of $n-k$ edges of $G_{\omega}$ without broken circuits.
\end{thm}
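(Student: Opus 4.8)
The plan is to exhibit the bijection as the \emph{identity} on edge-subsets of $G_{\omega}$: I will show that, for a suitable total order on $E(G_{\omega})$, the broken-circuit-free subgraphs with $n-k$ edges are precisely the edge-sets of the $k$-component decreasing spanning forests. Both families consist of $(n-k)$-edge subgraphs of $G_{\omega}$ (a spanning forest with $k$ components has exactly $n-k$ edges), and both are enumerated by ${{\omega}\brack {k}}$ --- the first by Theorem \ref{thm:meaning-Stir-1st-Dyck}, the second because comparing Whitney's theorem with the chromatic-polynomial identity in Corollary \ref{cor:rook-factor-chromatic-poly} forces the Whitney number $d_{n-k}$ to equal ${{\omega}\brack {k}}$. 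Thus it suffices to prove a single inclusion between two equinumerous finite sets. I fix the edge order that compares $\{i,j\}$ (written with $i<j$) first by its larger endpoint and then by its smaller endpoint.

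The structural input is a description of $G_{\omega}$ that I read off from the inductive construction in Section 2. Writing $\omega=x\omega'D\omega''$, the relation $G_{\omega}=(G_{\omega'}+K_1)\sqcup G_{\omega''}$ says that a dominating vertex is installed as a common ancestor of the block $\omega'$ while disjoint unions create sibling trees; iterating, $G_{\omega}$ is exactly the comparability graph of a rooted forest $\T$ on $[n]$, with $i\sim j$ in $G_{\omega}$ if and only if $i,j$ are comparable (one an ancestor of the other) in $\T$. Moreover the dominating vertex of the first block carries the label of its closing step $D$, namely $m$, the largest label in $\{1,\dots,m\}$; recursively this makes every node of $\T$ larger than all of its descendants, so ancestors carry larger labels. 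I will also record the elementary characterization that a forest $F\subseteq G_{\omega}$ is decreasing if and only if no vertex of $F$ has two neighbors larger than itself: orienting each edge from its smaller to its larger endpoint gives out-degree at most one exactly when this ``double-up'' pattern is absent, and in each tree the unique out-degree-zero vertex is then the maximal root.

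The core step is the cherry argument. Suppose a forest $F$ is \emph{not} decreasing, so some vertex $v$ has two neighbors $a,b>v$ with $\{v,a\},\{v,b\}\in F$. Since ancestors in $\T$ are the larger vertices, both $a$ and $b$ are ancestors of $v$; but the ancestors of a node form a chain, so $a$ and $b$ are comparable, giving $\{a,b\}\in E(G_{\omega})$ and hence a triangle on $\{v,a,b\}$. Under the chosen order the largest edge of this triangle is $\{a,b\}$, so its broken circuit is exactly $\{\{v,a\},\{v,b\}\}\subseteq F$; thus $F$ contains a broken circuit. Consequently every broken-circuit-free subgraph is a decreasing forest (a broken-circuit-free set is automatically acyclic, since any contained circuit would contribute its own broken circuit), and an $(n-k)$-edge such subgraph is a $k$-component decreasing spanning forest. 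This is the inclusion of one family into the other, and the equal cardinalities ${{\omega}\brack {k}}$ upgrade it to an equality, so the identity map is the asserted bijection. The main obstacle is the structural lemma: pinning down that $G_{\omega}$ is the comparability graph of a rooted forest in which ancestors are the larger labels, since it is this ancestor-chain property --- not merely any triangle-closure --- that guarantees the cherry always completes to a triangle and thereby reduces the whole statement to the single, transparent forbidden configuration.
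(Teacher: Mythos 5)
Your proof is correct, and it follows the paper's overall architecture: the same lexicographic edge order (larger endpoint first, then smaller), the same counting step (Theorem \ref{thm:meaning-Stir-1st-Dyck} on one side, Whitney's theorem compared against Corollary \ref{cor:rook-factor-chromatic-poly} on the other, so both families have cardinality ${{\omega}\brack {k}}$), and the identity map upgraded from a single inclusion to a bijection. The genuine difference is which inclusion gets proved, and by what structural means. The paper proves that every $k$-component decreasing forest is broken-circuit free: if such a forest contained a broken circuit $x_1,x_2,\dots,x_t$ (the deleted greatest edge being $(x_1,x_t)$), then, since paths in decreasing trees rise to the least common ancestor and then fall, and since $x_1$ is the maximum vertex of the circuit, the path would have to decrease monotonically from $x_1$ to $x_t$, which is incompatible with the interior vertex $x_2$ being smaller than $x_t$. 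You prove the reverse inclusion: a broken-circuit-free subgraph is automatically acyclic, and a forest that fails to be decreasing contains a cherry $\{v,a\},\{v,b\}$ with $a,b>v$, which by the ancestor-chain property of the comparability-forest structure of $G_{\omega}$ completes to a triangle whose broken circuit is exactly that cherry. Your route buys locality: everything reduces to one forbidden configuration, and only triangles are ever needed, whereas the paper must reason about arbitrary circuits --- indeed the paper's intermediate claim that $x_1,x_t$ are ``the greatest two vertices'' of the circuit is not literally correct in general (the greatest edge joins the maximum vertex of the circuit to the larger of its two neighbors \emph{on the circuit}, not necessarily to the second-largest vertex of the circuit), although its final contradiction survives once one substitutes the neighbor $x_2$ for that claim. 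The price of your route is the structural lemma that $G_{\omega}$ is the comparability graph of a rooted forest on $[n]$ in which every vertex exceeds all of its descendants; your derivation of this from the recursive construction (the dominating vertex of a block carries the largest label of that block) is accurate, and making it explicit is a small bonus that the paper leaves implicit.
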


\begin{proof} With the vertex set $\{1,\dots,n\}$ of $G_{\omega}$,  we denote the edge connecting two adjacent vertices $i,j$ ($j>i$) by the ordered pair $(j,i)$. Then we assign a total order on the edge set of $G_{\omega}$ by the lexicographical order of the ordered pairs, i.e., two edges $(x_1,x_2)<(y_1,y_2)$ if $x_i< y_i$ for the first $i$ where $x_i$ and $y_i$ differ.

On the basis of the edge-ordering, we observe that every $k$-component decreasing forest $\alpha$ of $G_{\omega}$ is exactly a subgraph consisting of $n-k$ edges without broken circuits. If not, $\alpha$ contains a broken circuit $\beta=x_1,x_2,\dots,x_t$ with the missing edge $(x_1,x_t)$, then $x_1,x_t$ are the greatest two vertices in $\beta$, which implies that there is a vertex $v_j$ ($1<j<t$) such that $x_1>x_j$ and $x_t>x_j$. This contradicts that $\beta$ is a decreasing path from $x_1$ to $x_t$. The assertion follows.
\end{proof}

\section{Concluding Remarks}
For a computational purpose, it is desirable for the words $\omega\in W$ to have the $x$'s completely to the right and the $D$'s completely to the left. See \cite{BHPSD} for information. That makes the normal order coefficients of $\omega$ have the characteristics of the Stirling numbers of second kind.  For combinatorial interest, we study the companion expansions with coefficients being generalizations of the Stirling numbers of the first kind and the Lah numbers, as intermediate stages of the normally ordered forms. There are other models for the normal order problem. For example, the \emph{gate diagrams} introduced by Blasiak and Flajolet \cite{BF} and \emph{path decompositions of digraphs} used by Dzhumadil'daev and Yeliussizov \cite{DY}. A large portion of existing results focused on the combinatorial interpretations of the normal order coefficients. We are interested in the interpretations of the numbers ${{\omega}\brack {k}}$ and ${{\omega}\bangle {k}}$ in the combinatorial models.

\end{document}